\newtheorem{cor}{Corollary}
\newtheorem{prob}{Problem}
\newcommand{\A}{\mathcal{A}}
\newcommand{\B}{\mathcal{B}}
\newcommand{\C}{\mathcal{C}}
\newcommand{\K}{\mathcal{K}_e}
\newcommand{\W}{\mathcal{W}}
\newcommand{\M}{\mathcal{M}}
\providecommand{\rset}[1]{\mathbb{R}^}
\newcommand{\fin}{\hfill$\bigtriangleup$}
\DeclareMathOperator{\diag}{diag}
\DeclareMathOperator{\opd}{d\!}
\title{Optimal $H_2$ moment matching-based model reduction for linear systems by (non)convex optimization\thanks{Submitted to the editors DATE.
\funding{This work  is supported by the Executive Agency for Higher Education, Research and Innovation Funding (UEFISCDI), Romania,  PNIII-P4-PCE-2016-0731, project ScaleFreeNet, no. 39/2017, and partially supported by University Politehnica Bucharest through the internal research grant GNaC 2918 ARUT,  no. 4/15.10.2018.}}}
\author{Ion Necoara\thanks{Department of Automatic Control and Systems Engineering, University Politehnica Bucharest (UPB), 060042 Bucharest, Romania   (\email{ion.necoara@acse.pub.ro}).}
\and Tudor C. Ionescu\thanks{Department of Automatic Control and Systems Engineering, University Politehnica Bucharest (UPB), 060042 Bucharest and  "Gh. Mihoc-C. Iacob" Institute of Mathematical Statistics and Applied Mathematics of the Romanian Academy, 050711 Bucharest, Romania (\email{tudor.ionescu@acse.pub.ro}).}}
\begin{document}
\maketitle

\begin{abstract}
In this paper we compute families of reduced order models that match a prescribed set of $\nu$ moments of a highly dimensional linear time-invariant system.  First, we fully parametrize the models in the interpolation points \emph{and} in the free parameters, and then we fix the set of interpolation points and  parametrize the models only in the free parameters. Based on these two parametrizations and using as objective function  the $H_2$-norm of  the error approximation we derive  non-convex optimization problems, i.e., we search for the optimal free parameters and even the interpolation points   to determine the approximation model yielding the minimal $H_2$-norm error.  Further, we  provide the necessary first-order optimality conditions for these optimization problems given explicitly  in terms of the controllability and the observability Gramians of a minimal realization of the error system. Using the optimality conditions, we propose  gradient type methods for solving the corresponding optimization problems, with mathematical guarantees on their  convergence. We also derive convex SDP relaxations for these problems and analyze when the convex relaxations are exact.  We illustrate numerically the efficiency of our results on several test examples.
\end{abstract}

\begin{keywords}
Model order reduction, moment matching, optimal $H_2$-norm, (non)convex optimization, gradient method.  \LaTeX
\end{keywords}

\noindent
\begin{AMS}
93A30 , 93C05,  90C26.
\end{AMS}

\section{Introduction}
Today we are living in a complex and interconnected world. Mathematical tools yield complex and highly dimensional dynamical models, e.g., from partial-differential equations or networks  of interconnected subsystems.  Hence, for purposes such as simulation and control design, scientists and engineers need tweaking of such models rendering them simpler and useful. To this end, model reduction is called for. The main idea of model order reduction is to find a low-order mathematical model that approximates the given highly dimensional  dynamical system. The approximation is accurate if the \emph{approximation error} is \emph{small} and if the most important physical properties/structure, such as the stability of the given system are preserved. A large number of methods have been developed for linear systems, split in two major categories. The first category consists of the so-called SVD-based methods, such as balanced truncation and Hankel norm approximation, described, e.g., in \cite{moore-1981}. The second category contains moment matching-based methods as, e.g., in  \cite{antoulas-sorensen-1999, grimme-1997, vandooren-1995}.  For a  survey on model reduction of linear systems see the monograph  \cite{antoulas-2005}.  

\medskip

\noindent \textit{State-of-the-art}.  Balancing is a tool using an energy measure of the states of the system to determine whether that state can be neglected in the dynamics or not, introduced by Moore in \cite{moore-1981} for stable linear systems. It measures the controllability and the observability of a given state through the Hankel singular values, input-output invariants of the system. From a model reduction point of view, one may choose to truncate the states which are badly controllable and badly observable, corresponding to the smaller Hankel singular values, yielding a lower-order dynamical system. The balanced truncation model preserves the stability of the given system. Another important property of this approach is the analitic upper error bound found of the infinity norm of the error-system, see, e.g., the work of Glover \cite{glover-1984}. However, the balanced truncation-based approximation does not minimize any norm \cite{antoulas-2005} associated to the error system. A refinement of the balanced truncation leads to an approximation method, with respect to the 2-induced norm of the Hankel operator associated to the system, known as optimal approximation in the Hankel norm. 

\medskip

\noindent The second category of model reduction techniques is based on  moment matching. Model reduction moment matching techniques represent an efficient tool for reducing the dimension of the system, see, e.g., \cite{devillemagne-skelton-IJC1987,antoulas-ball-kang-willems-LINALG1990} and \cite{antoulas-2005} for an overview for linear systems. In such techniques the reduced order model is obtained by constructing a lower degree rational function that approximates the given higher degree transfer function. The low degree rational function matches the original transfer function and its derivatives at various points in the complex plane. The notion of moments has been given in \cite{antoulas-2005}, through the series expansion of the transfer function of the linear system, see also \cite{gragg-lindquist-LAA1983,vandooren-1995,gallivan-vandooren-NA1999}. Hence, one can write equivalent definitions of moments. A first equivalent definition is in terms of  \emph{right Krylov projections} and a second equivalent definition is the dual of the previous one, in terms of \emph{left Krylov projections}. The reduced order models obtained through Krylov projections match a prescribed number of moments, say $\nu$. Alternatively, in \cite{gallivan-vandendorpe-vandooren-JCAM2004}, the Krylov projections are obtained solving Sylvester equations. To improve the accuracy of the reduced order models that achieve moment matching, two-sided projections have been employed, see, e.g., \cite{antoulas-2005,devillemagne-skelton-IJC1987,grimme-sorensen-vandooren-NA1995}. The simultaneous application of the left and the right projections yields a  reduced order model that matches $2 \nu$ moments at two sets of $\nu$ interpolation points, respectively. Recently, in \cite{gugercin-antoulas-beattie-SIAM2008,anic-beattie-gugercin-antoulas-AUT2013}, using two-sided projection-based interpolatory methods, the model that minimizes the $H_2$-norm of the approximation error is computed. Here, a unifying framework for the optimal $H_2$ approximant has been obtained using best approximation properties in the underlying Hilbert space. A set of local optimality conditions taking the form of a structured orthogonality condition have been developed. Based on the interpolation framework, \cite{gugercin-antoulas-beattie-SIAM2008} has provided an iteratively corrected rational Krylov algorithm (IRKA) for $H_2$ model reduction. The resulting model interpolates the transfer function of the given system and its derivatives at the mirror images of the poles of the approximant, see also \cite{meier-luenberger-TAC1967}. Further, in \cite{mayo-antoulas-LAA2007}, a new framework has been proposed for the solution to the realization problem. More precisely, the moment matching problem has been recast in terms of the Loewner matrix and the solutions of Sylvester equations, with matrices constructed from tangential interpolation data. The result is a reduced order model that achieves moment matching and is minimal, while the corresponding  Loewner/Krylov matrix-based algorithm is highly efficient numerically, involving only matrix-vector multiplications.  

\medskip

\noindent \textit{Related work}.  Most recently, in \cite{astolfi-CDC2007,astolfi-TAC2010,i-astolfi-colaneri-SCL2014} new definitions of moments in a time-domain framework have been given. Algebraically, the moments of a linear system are defined in terms of the solution of a Sylvester equation. From a systemic viewpoint, moments are in (a one-to-one) relation with the well-defined steady-state response of the system driven by a signal generator (a novel interpretation of the results in \cite{gallivan-vandendorpe-vandooren-JCAM2004}). An approximation achieves moment matching if the steady-state of its response to a signal generator matches the steady-state response of the original system at the same signal generator. Imposing such moment matching conditions yields a \emph{family of} reduced order \emph{parametrized} approximations. The degrees of freedom are used such that  properties, like stability, are preserved. Based on a dual Sylvester equation, a new definition of moments dual to the previous one results, related to the well-defined steady-state response of a generalized signal generator driven by the system. The resulting (family of) reduced order models that achieve moment matching are also  parametric. Employing both time-domain notions of moments, two-sided moment matching can be also achieved \cite{i-TAC2016}. The resulting model that matches $2\nu$ moments is computed by a specific selection of the free parameters. Moreover, the reduced order model matching the moments of both the transfer function of the system and its derivative is determined by another specific choice of the free parameters.  The results therein follow the necessary  first-order  optimality conditions of optimal $H_2$ model reduction. Experimentally, these models exhibit low $H_2/H_\infty$-norm of the error following the arguments in \cite{gugercin-antoulas-beattie-SIAM2008} that a reduced order  locally minimizing the $H_{2}$-norm of the approximation error  is achieving moment matching of both the transfer function of the given system and its derivative. 

\medskip

\noindent \textit{Motivation}.  However, in the time-domain moment matching framework finding a model in the family of $\nu$ order models that matches a set of $\nu$ prescribed moments and approximates a system with the \emph{minimal} $H_2$-norm of the error is an open question.   Even the relaxed version of this general  optimal $H_2$ model reduction problem, where we seek only  the free parameters that yield a model from \emph{a family} of $\nu$ order models that match $\nu$ prescribed moments at a set of $\nu$ \emph{fixed} interpolation points and  minimizing  the $H_2$-norm of the approximation error  has not been  addressed yet. Some initial  progress has been made in \cite{i-TAC2016}, see related work paragraph, by matching the derivatives of the given system. However, the reduced model does not yield the optimal $H_2$ approximation since there is no degree of freedom left in the parameters to ensure a minimal norm of the error. Hence, in the time-domain moment matching framework, the problem of finding the free parameters and eventually also the interpolation points yielding a $\nu$ order model that matches $\nu$ moments and minimizes the $H_2$-norm of the approximation error has not been addressed or fully understood.  Furthermore,  the algorithms developed so far for the problem of optimal $H_2$-norm model reduction do not guarantee preservation of stability or other physical properties. Thus, the problem of finding the \emph{stable} reduced order model achieving the minimal possible approximation  for $H_2$-norm motivates our work here.

\medskip

\noindent  \textit{Contributions}.   In this paper we write families of reduced order models that match a prescribed set of $\nu$ moments of a highly dimensional  linear time-invariant system.  First, we fully parametrize the models in the interpolation points \emph{and} in the free parameters, and then \emph{we fix} the set of interpolation points and  parametrize the models only in the free parameters. Based on the parametrizations and using as objective function  the $H_2$-norm of  the error approximation we derive non-convex optimization problems, i.e., we search for the optimal free parameters and eventually  the interpolation points to determine the approximation model yielding the minimal $H_2$-norm error.  For all the optimization problems we compute the necessary first-order optimality conditions given explicitly  in terms of the controllability and the observability Gramians of a minimal realization of the error system. Furthermore, using the optimality conditions,  we propose several  gradient-type methods for solving the corresponding optimization problems, with mathematical guarantees on their  convergence. We also provide convex SDP relaxations for these non-convex  optimization problems and analyze when these  relaxations are exact. Our contributions are summarized as follows: 

\begin{enumerate}[wide]
\item[\textbf{(i)}] We first formulate a general model reduction problem with reduced models  from the family of models matching $\nu$ moments parameterized  in the interpolation points and in the free parameters.  A corresponding optimization formulation is derived, where the objective function is the  $H_2$-norm of the approximation error, written explicitly  in terms of the controllability and observability Gramians of a minimal realization of the error system.  We also  write the necessary first-order optimality conditions (KKT system) of this optimization problem, in terms of these Gramians. 

\item[\textbf{(ii)}]  For this general model reduction problem we propose several numerical optimization algorithms.  The first method is  using a  gradient update for solving  the KKT system, leading to a simple iteration involving only matrix multiplications. However, with this update the stability of the approximation is achieved asymptotically. The second solution is based on a partial minimization approach. We show that for the evaluation of the gradient of the objective function we need to solve two Lyapunov  equations associated to the Gramians, but the gradient is Lipschitz continuous. Therefore,  a gradient-based algorithm is developed, ensuring convergence due to the smoothness of the objective function.  Although the gradient evaluation is expensive, each iteration provides a stable reduced order model, whereas the first method yields a stable reduced order model only asymptotically. Finally, we propose a convex SDP relaxation of the original optimization problem and derive sufficient conditions when this relaxation is exact. Note that the interpolation points obtained are the spectrum of a squared matrix computed by each of these algorithms.  

\item[\textbf{(iii)}]  We also consider a relaxed version of the general model reduction problem, searching only for the free parameters that yield the optimal reduced order model from the family of models matching $\nu$  moments at \emph{fixed} interpolation points. Optimization formulations for this particular problem are also proposed and subsequently the previous numerical optimization algorithms can be also applied  to solve  this simpler problem, obtaining   similar convergence guarantees. Finally,  we illustrate the efficiency of our results numerically using several test problems. 
\end{enumerate}

\medskip

\noindent \textit{Content}.  The paper is organized as follows. In Section \ref{sect_prel} we briefly  review the main results for Sylvester equation-based time-domain moment matching model reduction framework for linear time-invariant  systems.  In Section \ref{sect_H2formulation}, we formulate two optimal $H_2$-norm model reduction problems, recast them as  optimization problems with a Gramian-based cost function, and derive the corresponding first-order optimality conditions. We also analyze several numerical optimization methods for solving these problems. Finally,  in Section \ref{sect_expl} we illustrate the efficiency of our theory on several test examples.

%%%%%%%%%%%%%%%%%%%%%%%%%%%%%%%%%%%%%%%%%%%%

\section{Preliminaries}
\label{sect_prel} 
In this section we briefly review the main results for  Sylvester equation-based time-domain moment matching model reduction for linear time invariant  systems, see also \cite{astolfi-TAC2010,i-astolfi-colaneri-SCL2014}.  To this end, we review the notions of controllability and the observability Gramians for a linear system and the computation of the associated  $H_2$-norm based on the Gramians. 

%%%%%%%%%%%%%%%%%%%%%%%%%%%%%%%%%%%%%%%%%%%%%

\subsection{Linear  systems}
Consider a linear time invariant (LTI), minimal, square, dynamical  system:
\begin{eqnarray}
\label{system}
\Sigma: \quad \dot x &=& Ax+Bu, \quad  y=  Cx,
\end{eqnarray}
with the state $x\in\mathbb{R}^n$, the input $u\in\mathbb{R}^m$ and the output $y\in\mathbb R^p$. The transfer function of \eqref{system} is:
\begin{equation}\label{tf}
K(s)=C(sI-A)^{-1}B,\quad K:\mathbb{C} \to \mathbb {C}^{p \times m}.
\end{equation}
Throughout the rest of the paper we assume that the system \eqref{system} is stable, i.e.,  $\sigma(A) \subset \mathbb{C}^-$.

%%%%%%%%%%%%%%%%%%%%%%%%%%%%%%%%%%%%%%%%%%%%%%%%%%%%%%%%%%%%%%%%%%%%%%%%%

\subsection{Sylvester equation-based moment matching}
Assume that \eqref{system} is a minimal realization of the transfer function
$K(s)$.  The moments of (\ref{tf}) are  defined as follows:
\begin{definition}
\label{def_moment}
\cite{antoulas-2005,astolfi-TAC2010}
%The 0-moment of system (\ref{system}) at $s_{1}\in\mathbb{C}$, along direction $\ell\in\mathbb C^m$ is the complex matrix $\eta_{0}(s_{1})=C(s_{1}I-A)^{-1}B\ell\in\mathbb{C}^p$. 
The $k$-moment of (\ref{system}) at $s_{1}$, along direction $\ell\in\mathbb C^m$ is: 
\[ \eta_{k}(s_{1})=\frac{(-1)^{k}}{k!}\frac{\opd^{k}K(s)}{\opd s^{k}}\ell\in\mathbb C^{p},\quad k\geq 0. \]
 %Dually, the 0-moment of system (\ref{system}) at $s_{1}\in\mathbb{C}$, along direction $r\in\mathbb C^{1\times p}$ is the complex matrix $\eta_{0}(s_{1})=rC(s_{1}I-A)^{-1}B\in\mathbb{C}^{1\times p}$. 
The $k$-moment of system (\ref{system}) at $s_{1}$, along direction $r\in\mathbb C^{1\times p}$ is: 
\[ \eta_{k}(s_{1})=r\frac{(-1)^{k}}{k!}\frac{\opd^{k}K(s)}{\opd s^{k}}\in\mathbb C^{1\times p},\quad k\geq 0. \] 
\end{definition}

\noindent Consider the linear system (\ref{system}) and let
the matrices $S\in\mathbb{R}^{\nu\times\nu}$, $L=[\ell_1\ \ell_2\ ...\ \ell_l]\in\mathbb{C}^{m\times\nu}$, where $\ell_i\in\mathbb C^m$,
and  $Q\in\mathbb{R}^{\nu\times\nu}$, $R=[r_1^*\ ...\ r_\ell^*]^*\in\mathbb{C}^{\nu\times p}$,  where $r_i\in\mathbb{C}^{1\times p}$, be such that the pair $(L,S)$ is observable and  $(Q,R)$ is controllable, respectively. Let $\Pi\in\mathbb{R}^{n\times\nu}$ and $\Upsilon\in\mathbb{R}^{\nu\times n}$ be the solutions of  Sylvester equations:
\begin{subequations}
\label{eq_S}
\begin{eqnarray}
A\Pi+BL & = & \Pi S,
\label{eq_Sylvester_Pi}\\
\Upsilon A+RC & = &  Q\Upsilon,
\label{eq_Sylvester_Y}
\end{eqnarray}
\end{subequations} respectively. Furthermore, since the system is minimal, assuming that $\sigma(A)\cap\sigma(S)=\emptyset$, then
$\Pi$ is the unique solution of the equation (\ref{eq_Sylvester_Pi})
and ${\rm rank}\ \Pi=\nu$. Assuming that $\sigma(A)\cap\sigma(Q)=\emptyset$,
then $\Upsilon$ is the unique solution of the equation (\ref{eq_Sylvester_Y})
and ${\rm rank}\ \Upsilon=\nu$,  see, e.g., \cite{desouza-bhattacharyya-LAA1981}. Then, the moments of  system (\ref{system}) are characterized as follows:
\begin{proposition}
\label{prop_mom_time}
\cite{astolfi-TAC2010,astolfi-CDC2010}\label{def_PI}
\begin{enumerate}[wide]
\item The moments of system (\ref{system}) at the interpolation points
$\{s_{1},s_{2},...,s_{\ell}\}=\sigma(S)$ are determined  by the elements of the matrix $C\Pi$.
\item The moments of system (\ref{system}) at the interpolation points
$\{s_{1},s_{2},...,s_{\ell}\}=\sigma(Q)$ are determined by the elements of the matrix $\Upsilon B$.
\end{enumerate}
\end{proposition}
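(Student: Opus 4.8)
The plan is to reduce both statements to an explicit formula for the moments in terms of powers of the resolvent $(sI-A)^{-1}$, and then to recognize exactly these powers inside the columns of $\Pi$ (respectively the rows of $\Upsilon$) produced by the Sylvester equations \eqref{eq_S}. First I would differentiate the transfer function \eqref{tf}: from $\frac{\opd}{\opd s}(sI-A)^{-1}=-(sI-A)^{-2}$ one gets, by induction, $\frac{\opd^{k}}{\opd s^{k}}(sI-A)^{-1}=(-1)^{k}k!\,(sI-A)^{-(k+1)}$, so that Definition \ref{def_moment} collapses to the clean form
\begin{equation*}
\eta_{k}(s_{1}) = C(s_{1}I-A)^{-(k+1)}B\ell .
\end{equation*}
This is well defined precisely because the standing assumption $\sigma(A)\cap\sigma(S)=\emptyset$ makes $s_{1}I-A$ invertible for every $s_{1}\in\sigma(S)$.

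For part 1 I would first treat the case in which $S$ is a single Jordan block at $s_{1}$ (unit super-diagonal) and $L$ is adapted to it. Writing $\Pi=[\pi_{1}\ \cdots\ \pi_{\nu}]$ column-wise, the first column of $A\Pi+BL=\Pi S$ reads $(s_{1}I-A)\pi_{1}=B\ell$, while the $j$-th column expresses $(s_{1}I-A)\pi_{j}$ through $\pi_{j-1}$; a one-line induction then identifies $\pi_{j}$ with $(s_{1}I-A)^{-j}B\ell$ up to the sign fixed by the Jordan convention. Comparing with the formula above, the $j$-th column of $C\Pi$ is therefore $\pm\,\eta_{j-1}(s_{1})$, i.e., $C\Pi$ collects the moments $\eta_{0}(s_{1}),\dots,\eta_{\nu-1}(s_{1})$. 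For a general $S$ I would pass to its Jordan canonical form $S=TJT^{-1}$; since $\Pi T$ solves \eqref{eq_Sylvester_Pi} with data $(J,LT)$, the block-diagonal structure of $J$ decouples the problem into one block of the above type per interpolation point of $\sigma(S)$. Observability of $(L,S)$ guarantees that the direction attached to each Jordan chain is non-degenerate, so that the entries of $C\Pi$ stand in an invertible linear correspondence with the moments $\eta_{k}(s_{i})$, which is the assertion ``determined by the elements of $C\Pi$''.

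Part 2 is entirely dual, and I would deduce it from part 1 rather than redo the computation. Transposing \eqref{eq_Sylvester_Y} gives $A^{\top}\Upsilon^{\top}+C^{\top}R^{\top}=\Upsilon^{\top}Q^{\top}$, which is an instance of \eqref{eq_Sylvester_Pi} for the transposed realization $(A^{\top},C^{\top},B^{\top})$ with $(Q^{\top},R^{\top})$ playing the role of $(S,L)$ and $\Upsilon^{\top}$ the role of $\Pi$. Since the transposed realization has transfer function $K(s)^{\top}$ and $\sigma(Q^{\top})=\sigma(Q)$, applying part 1 shows that $(\Upsilon B)^{\top}$ collects the moments of $K^{\top}$ along the columns of $R^{\top}$; transposing back identifies $\Upsilon B$ with the row-direction moments of $K$ at $\sigma(Q)$.

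The individual computations here are routine once the resolvent-power formula is in hand, so I expect the only real obstacle to be the bookkeeping in the general-$S$ step: one must track how the Jordan structure and the observability of $(L,S)$ organize the separate resolvent-power identities into the columns of $C\Pi$, and argue that the resulting correspondence is genuinely invertible rather than a mere inclusion. Keeping the convention-dependent signs and the per-chain directions straight is the delicate part; everything else follows from the two displayed formulas above.
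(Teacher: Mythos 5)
Your argument is correct, and it is essentially the standard one: the paper itself gives no proof of this proposition (it is quoted from the cited works of Astolfi), but the mechanism you use --- resolvent powers $C(s_1I-A)^{-(k+1)}B\ell$ read off column-by-column from the Sylvester equation \eqref{eq_Sylvester_Pi} for Jordan-block $S$, plus duality for part~2 --- is exactly the characterization the paper relies on later via $\Pi=VT$ in Lemma~\ref{lemma_V_and_Pi}, Corollary~\ref{cor_momPiY_Krylov}, and the higher-order-moment discussion with Jordan blocks $\Sigma_i$. The only loose end, which you correctly flag, is that for a general (non-adapted) $L$ the entries of $C\Pi$ are triangular combinations of moments along the several directions $\ell_i$ rather than the moments themselves; the proposition's informal ``determined by'' absorbs this, with observability of $(L,S)$ guaranteeing the correspondence is nondegenerate.
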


\noindent The next result gives necessary and sufficient conditions
for a low order system to achieve moment matching:
\begin{proposition}
\cite{astolfi-TAC2010,astolfi-CDC2010}
\label{prop_FGL}
Consider the reduced order  system:
\begin{equation}
\dot{\xi}=F\xi+Gu, \quad \psi=H\xi,
\label{red_mod_F}
\end{equation}
with $F\in\mathbb{R}^{\nu\times\nu}, \; G\in\mathbb{R}^{\nu\times m}, \; H\in\mathbb{R}^{p\times\nu}$ and the corresponding transfer function:
\begin{equation}
\label{eq_tf_F}
\widehat K(s)=H(sI-F)^{-1}G.
\end{equation}
Let $S\in\mathbb{C}^{\nu\times\nu}$ and $L\in\mathbb{C}^{m\times\nu}$ be  such that the pair $(L,S)$ is observable, and let $Q\in\mathbb{C}^{\nu\times\nu}$ and $R\in\mathbb{C}^{\nu\times p}$ be  such that the pair $(Q,R)$ is controllable. Moreover, assume that $\sigma(S)\cap\sigma(A)=\emptyset$ and $\sigma(Q)\cap\sigma(A)=\emptyset$. Then, the following statements hold:
\begin{enumerate}
\item Assume that $\sigma(F)\cap\sigma(S)=\emptyset$. Then, the system (\ref{red_mod_F})
matches the moments of the original system \eqref{system} at $\sigma(S)$ if and only if:
\begin{equation}
\label{eq_MM_CPi}
HP=C\Pi,
\end{equation}
where the invertible matrix $P\in\mathbb{C}^{\nu\times\nu}$ is the
unique solution of the  Sylvester equation $$FP+GL=PS.$$
\item Assume that $\sigma(F)\cap\sigma(Q)=\emptyset$. Then, the system (\ref{red_mod_F})
matches the moments of the original system  \eqref{system}  at $\sigma(Q)$ if and only if:
\begin{equation}
\label{eq_MM_YB}
\Upsilon B=PG,\
\end{equation}
where the invertible matrix $P\in\mathbb{C}^{\nu\times\nu}$ is the
unique solution of the Sylvester equation $$QP=PF+RH.$$ 
\end{enumerate}
\end{proposition}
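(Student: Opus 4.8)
The plan is to exploit the fact that the reduced-order system \eqref{red_mod_F} has exactly the same structure as the original system \eqref{system}, with the triple $(A,B,C)$ replaced by $(F,G,H)$ and the state dimension $n$ replaced by $\nu$. Consequently, the whole moment-matching characterization of Proposition~\ref{prop_mom_time} applies verbatim to \eqref{red_mod_F}. Concretely, for statement~1 I would first invoke the Sylvester solvability result already cited in the preliminaries: since $\sigma(F)\cap\sigma(S)=\emptyset$, the equation $FP+GL=PS$ (the reduced counterpart of \eqref{eq_Sylvester_Pi}, under $(A,B,\Pi)\mapsto(F,G,P)$) has a unique solution $P\in\mathbb{C}^{\nu\times\nu}$, so $HP$ is well defined. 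Proposition~\ref{prop_mom_time}(1) then tells us that the moments at $\sigma(S)$ of the original system are encoded by $C\Pi$, while the same proposition applied to \eqref{red_mod_F} tells us that the moments at $\sigma(S)$ of the reduced model are encoded by $HP$.

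With both systems reduced to their encoding matrices, the equivalence becomes almost immediate. The correspondence between the moments at $\sigma(S)$ and the encoding matrix depends only on the fixed pair $(L,S)$, so it is the \emph{same} map for both systems; moreover, because $(L,S)$ is observable, Proposition~\ref{prop_mom_time} provides a genuine bijection between the family of moments at $\sigma(S)$ and the matrix. Hence the reduced model matches the moments of the original at $\sigma(S)$ (i.e. the two families of moments coincide) if and only if the encoding matrices coincide, which is precisely \eqref{eq_MM_CPi}, $HP=C\Pi$. Statement~2 is entirely dual: one replaces the right Sylvester equation \eqref{eq_Sylvester_Pi} by its left counterpart \eqref{eq_Sylvester_Y}, the observable pair $(L,S)$ by the controllable pair $(Q,R)$, and the encoding matrices $C\Pi,\,HP$ by $\Upsilon B,\,PG$, where $P$ now solves the reduced dual equation $QP=PF+RH$; Proposition~\ref{prop_mom_time}(2) then yields moment matching at $\sigma(Q)$ if and only if $\Upsilon B=PG$, i.e. \eqref{eq_MM_YB}.

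The step I expect to be the main obstacle is the one hidden inside ``the same map is a bijection'': showing that equality of the moments is genuinely equivalent to equality of the encoding matrices, with no loss of information in passing from the moments to $C\Pi$ (resp. $\Upsilon B$). This is exactly where the observability of $(L,S)$ (resp. the controllability of $(Q,R)$) is indispensable. I would make it explicit by putting $S$ in Jordan form and reading off, from the column-by-column recursion induced by the Sylvester equation, that the $j$-th column of $C\Pi$ is a triangular combination of the moments $\eta_0(s_i),\dots,\eta_{j-1}(s_i)$ with invertible leading term precisely when $(L,S)$ is observable; this triangular structure makes the moment-to-matrix map invertible. The asserted nonsingularity of $P$ I would then recover as a by-product: under minimality of the reduced realization the reduced solution $P$ has full rank $\nu$, and being square it is invertible, which is also what guarantees that \eqref{red_mod_F} genuinely interpolates at all $\nu$ points of $\sigma(S)$ (resp. $\sigma(Q)$).
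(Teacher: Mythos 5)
The paper does not prove this proposition at all: it is imported verbatim from \cite{astolfi-TAC2010,astolfi-CDC2010}, so there is no in-paper argument to compare against. Your route is the standard one behind the cited result, and its core is sound: since $\sigma(F)\cap\sigma(S)=\emptyset$, the reduced Sylvester equation $FP+GL=PS$ has a unique solution, and reading both Sylvester equations column by column (after putting $S$ in Jordan form) shows that the columns of $C\Pi$ and of $HP$ are the \emph{same} triangular, invertible combinations of the tangential moments $K^{(k)}(s_i)\ell_i$ and $\widehat K^{(k)}(s_i)\ell_i$ respectively, the invertibility of the combination resting exactly on observability of $(L,S)$; hence the two families of moments coincide iff $HP=C\Pi$, and the dual statement follows by transposition with $(Q,R)$ controllable. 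That is precisely the content of Proposition~\ref{prop_mom_time} applied to both realizations, as you say.

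The one soft spot is your closing claim that the invertibility of $P$ comes out ``as a by-product'' of minimality of the reduced realization. The de~Souza--Bhattacharyya result cited in the preliminaries gives controllability of $(F,G)$ and observability of $(L,S)$ as \emph{necessary} conditions for the solution of $FP+GL=PS$ to be nonsingular; in the multi-input case they are not sufficient, so minimality alone does not force $P$ invertible. This does not damage the equivalence you actually prove --- the identification of the columns of $HP$ with the moments of \eqref{red_mod_F} needs only $\sigma(F)\cap\sigma(S)=\emptyset$, not invertibility of $P$ --- but if you want to justify the word ``invertible'' in the statement you should either restrict to the rank-one case $m=1$ (where controllability, observability and disjoint spectra are jointly sufficient) or treat it as a standing assumption on the admissible reduced models, as the cited references effectively do.
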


\noindent We are now ready to present families of $\nu$ order models that match $\nu$ moments of the given system \eqref{system}:
\begin{enumerate}[wide]
\item[\textbf{(I)}] The approximation, parameterized in the interpolations points given by the spectrum of $S$ and the free  parameters given by $G$ and $L$  
\begin{equation}\label{redmod_CPi}
\widehat \Sigma_{(S,G,L)}: \quad  \dot{\xi}=(S-GL)\xi+Gu, \quad  \psi=C\Pi\xi,
\end{equation}
with the transfer function
\begin{equation}\label{tf_redmod_CPi}
\widehat K(s)=C\Pi(sI-S+GL)^{-1}G,
\end{equation}
describes a family of $\nu$ order models  that achieve moment matching at $\sigma(S)$ satisfying the following properties and constraints:
\begin{enumerate}
\item $\widehat \Sigma_{(S,G,L)}$ is parameterized in the triplet $(S,G,L)$, with $S \in \mathbb{C}^{\nu \times \nu}$, $G \in \mathbb{C}^{\nu\times m}$ and $L \in\mathbb{C}^{m \times \nu}$ such that the pair $(L,S)$ is observable
\item $\sigma(S)\cap\sigma(A)=\emptyset$
\item $\sigma(S-GL) \cap \sigma(S)=\emptyset$.
\end{enumerate}
If the pair of observable matrices $(L,S)$ is a priori fixed, and consequently $\nu$ interpolation points in $\sigma(S)$ are fixed,  then the system $\widehat{\Sigma}_G$ from  \eqref{redmod_CPi}  defines a family of $\nu$ order models that match $\nu$ moments along  directions $\ell_i$ of the original system  \eqref{system} at $\sigma(S)$  and satisfies the following properties and constraints:
\begin{enumerate}
\item $\widehat{\Sigma}_G$ is parametrized in  free parameters  $G$
\item $\sigma(S-GL) \cap \sigma(S)=\emptyset$.
\end{enumerate}	

\item[\textbf{(II)}]  Similarly, the approximation, parameterized in the interpolations points given by the spectrum of $Q$ and the free  parameters given by $H$ and $R$
\begin{equation}
\label{redmod_YB}
\bar \Sigma_{(Q,H,R)}:\quad \dot{\xi}=(Q-RH)\xi+\Upsilon Bu, \quad  \psi=H\xi,
\end{equation}
with the transfer function
\begin{equation}\label{tf_redmod_YB}
\bar K(s)=H(sI-S+GL)\Upsilon B,
\end{equation}
describes a family of $\nu$ order models  that achieve moment matching at $\sigma(Q)$ satisfying the following properties and constraints:
\begin{enumerate}
\item $\widehat \Sigma_{(Q,R,H)}$ is parameterized in the triplet $(Q,R,H)$, with $Q \in \mathbb{C}^{\nu \times \nu}$, $H^T \in \mathbb{C}^{p\times \nu}$ and $R \in\mathbb{C}^{\nu\times p}$ such that the pair $(Q,R)$ is controllable
\item $\sigma(Q)\cap\sigma(A)=\emptyset$
\item $\sigma(Q-RH) \cap \sigma(Q)=\emptyset$.
\end{enumerate}
If the pair of controllable matrices $(Q,R)$ is a priori fixed, then $\bar{\Sigma}_H$  yielded by \eqref{redmod_YB} defines a family of $\nu$ order models that match $\nu$ moments along prescribed directions $r_i$ of \eqref{system} at $\sigma(Q)$ fixed, satisfying the following properties and constraints:
\begin{enumerate}
\item $\bar{\Sigma}_H$ is parametrized in $H$
\item $\sigma(Q-RH) \cap \sigma(Q)=\emptyset$.
\end{enumerate}
\end{enumerate}

%%%%%%%%%%%%%%%%%%%%%%%%%%%%%%%%%%%%%%%

\subsection{Computation of moments} 
In practice, the moments $C\Pi$ and $\Upsilon B$ are \emph{not} computed solving the Sylvester equation \eqref{eq_S}, but using Krylov projections. In this section we recall two different notions of moments based on Krylov projections. This definition allows for development of efficient numerical algorithms for the computation of reduced order models, i.e., the Lanczos procedures, see, e.g., \cite{devillemagne-skelton-IJC1987,feldman-freund-1995,gallivan-grimme-vandooren-NA1996,grimme-1997,jaimoukha-kasenally-SIAM1997,gugercin-antoulas-2004} and references therein. These algorithms achieve moment matching through iterative procedures. As presented in \cite{i-astolfi-colaneri-SCL2014}, given a set of points in the complex plane, not among the poles of the given system, Krylov projections may be constructed. In particular, let $s_1,s_2,...,s_\nu,s_{\nu+1},s_{\nu+2},...,s_{2\nu}\in\mathbb{C}\setminus\sigma(A)$, $s_i\ne s_j,\ i\ne j$ and let $V\in\mathbb{C}^{n\times \nu}$ and $W\in\mathbb{C}^{n\times \nu}$ be, respectively:
\begin{subequations}
\begin{eqnarray}\label{projection_W_V}
V &=& [(s_1I-A)^{-1}B\quad (s_2I-A)^{-1}B\quad \dots\quad (s_{\nu}I-A)^{-1}B], \label{projection_V} \\
W &=& [ (s_{\nu+1}I-A^*)^{-1}C^* \quad (s_{\nu+2}I-A^*)^{-1}C^* \quad \dots \quad (s_{2\nu}I-A^*)^{-1}C^*]. \label{projection_W}
\end{eqnarray}
\end{subequations}

\noindent The next result follows from Definition \ref{def_moment}, writing the moments at each point $s_i$ in matrix form:
\begin{proposition}
\cite{i-astolfi-colaneri-SCL2014}
\label{def_VW}
The moments of system (\ref{system}) at $s_1,s_2,...,s_\nu \notin\sigma(A)$ are the elements of the matrix $CV$. We call $V$ the right Krylov projection matrix. Furthermore, the moments of system (\ref{system}) at $s_{\nu+1},s_{\nu+2},...,s_{2\nu}\notin\sigma(A)$ are the elements of the matrix $WB$. We call $W$ the left Krylov projection.
\end{proposition}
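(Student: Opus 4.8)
The plan is to reduce the whole statement to a single resolvent computation together with the observation that the matrices $V$ and $W$ merely collect \emph{zeroth-order} moments, i.e.\ transfer-function values, at the prescribed points. Concretely, I would start from Definition~\ref{def_moment} and the transfer function $K(s)=C(sI-A)^{-1}B$ from \eqref{tf}, and compute the derivatives of the resolvent $(sI-A)^{-1}$ in closed form.

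First I would establish, by induction on $k$ and for every $s\notin\sigma(A)$, the identity
\[
\frac{\opd^{k}}{\opd s^{k}}(sI-A)^{-1}=(-1)^{k}\,k!\,(sI-A)^{-(k+1)}.
\]
The base case $k=0$ is trivial, and the inductive step follows by differentiating $(sI-A)^{-(k+1)}$ using $\frac{\opd}{\opd s}(sI-A)^{-1}=-(sI-A)^{-2}$ and the product rule. Substituting this into the definition of the $k$-moment along $\ell$ and cancelling the scalar factor $\frac{(-1)^{k}}{k!}$ gives the compact form $\eta_{k}(s_i)=C(s_iI-A)^{-(k+1)}B\,\ell$; the analogous left-sided computation yields $\eta_{k}(s_i)=r\,C(s_iI-A)^{-(k+1)}B$.

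Second I would specialize to $k=0$, which is exactly the case relevant to $V$ and $W$, since these contain only the plain resolvent and not its higher powers. For $k=0$ the moment reduces to the transfer-function value $\eta_{0}(s_i)=K(s_i)=C(s_iI-A)^{-1}B$. By the definition of $V$ in \eqref{projection_V}, its $i$-th column block is precisely $(s_iI-A)^{-1}B$, so the $i$-th column block of $CV$ equals $C(s_iI-A)^{-1}B=\eta_{0}(s_i)$. Hence the entries of $CV$ are the moments of \eqref{system} at $s_1,\dots,s_\nu$, which proves the first claim.

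Finally, the statement for $W$ follows by duality: applying the identity above to the adjoint triple $(A^{*},C^{*},B^{*})$, whose transfer function is the conjugate transpose of $K$, shows that the $j$-th block $(s_{\nu+j}I-A^{*})^{-1}C^{*}$ of $W$ carries the moment at $s_{\nu+j}$, and forming the product with $B$ recovers $C(s_{\nu+j}I-A)^{-1}B$. I expect the only delicate point to be bookkeeping the conjugate transposes, so that the product $WB$ (read in the paper's convention) reproduces $K(s_{\nu+j})$ rather than $K(\bar s_{\nu+j})$; once the adjoint identity $(sI-A^{*})^{-1}=\bigl((\bar sI-A)^{-1}\bigr)^{*}$ is written out, the matching of interpolation points is immediate. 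No genuinely hard step arises here: the proposition is essentially a direct translation of Definition~\ref{def_moment} at $k=0$ into matrix form.
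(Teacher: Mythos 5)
Your proposal is correct and follows exactly the route the paper itself indicates: the paper gives no detailed proof, merely noting that the proposition "follows from Definition \ref{def_moment}, writing the moments at each point $s_i$ in matrix form" and citing \cite{i-astolfi-colaneri-SCL2014}, which is precisely your specialization to $k=0$ of the resolvent identity together with the column-block reading of $CV$ and $WB$. Your remark about the conjugate-transpose bookkeeping for $W$ is a fair observation about the paper's notational convention rather than a gap in the argument.
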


\noindent In the sequel, we briefly overview the equivalent relation between the moments described in Proposition \ref{prop_mom_time} and the moments described by Proposition \ref{def_VW}. In \cite{i-astolfi-colaneri-SCL2014,astolfi-CDC2010} relations between the projections $V$ and $W$ and the solutions of the Sylvester equations $\Pi$ and $\Upsilon$ were established:
\begin{lemma}
\label{lemma_V_and_Pi}
\cite{astolfi-CDC2010} 
\begin{enumerate}[wide]
\item Let $\Pi$ be the solution of the Sylvester equation (\ref{eq_Sylvester_Pi})
and let the projector $V$ be as in (\ref{projection_V}).
Then, there exists a square, non-singular, matrix $T \in\mathbb{C}^{\nu \times \nu}$ such that $\Pi = V T$. For $T=I_\nu$, $V$ from \eqref{projection_V} is the unique solution of equation \eqref{eq_Sylvester_Pi} for $S=\diag (s_1,s_2,\dots,s_\nu)$ and $L=[\ell_1\ \ell_2\ ...\ \ell_l]\in\mathbb{R}^{m\times\nu}$.

\item Let $\Upsilon$ be the solution of the Sylvester equation (\ref{eq_Sylvester_Y})
and let the projector $W$ be as in (\ref{projection_W}). Then, there exists a square, non-singular, matrix $T \in\mathbb{C}^{\nu \times \nu}$ such that $\Upsilon = T W$. For $T=I_\nu$, $W$ from \eqref{projection_W} is the unique solution of equation \eqref{eq_Sylvester_Y}  for $Q=\diag(s_{\nu+1},s_{\nu+2},\dots,s_{2n})$ and $R=[r_1^*\ ...\ r_\ell^*]^*\in\mathbb{R}^{\nu\times p}$. 
\end{enumerate}
\end{lemma}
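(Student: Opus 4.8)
The plan is to prove part~1 by reducing the Sylvester equation \eqref{eq_Sylvester_Pi} to a collection of resolvent identities, one per interpolation point, and then to read off the matrix $T$ from the resulting column-wise formulas; part~2 will follow by a dual argument. First I would use the standing setup that $\sigma(S)=\{s_1,\dots,s_\nu\}$ consists of the distinct interpolation points, so that $S$ is diagonalizable: pick an invertible $M\in\mathbb{C}^{\nu\times\nu}$ with $M^{-1}SM=\diag(s_1,\dots,s_\nu)=:\Lambda$. Multiplying \eqref{eq_Sylvester_Pi} on the right by $M$ and setting $\widetilde\Pi:=\Pi M$, $\widetilde L:=LM$ turns the equation into $A\widetilde\Pi+B\widetilde L=\widetilde\Pi\Lambda$, whose $i$-th column reads $(s_iI-A)\widetilde\pi_i=B\widetilde\ell_i$. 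Since $\sigma(A)\cap\sigma(S)=\emptyset$ guarantees $s_i\notin\sigma(A)$, the resolvent $(s_iI-A)^{-1}$ exists and hence $\widetilde\pi_i=(s_iI-A)^{-1}B\widetilde\ell_i$.

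Now I would recognize each $\widetilde\pi_i$ as the $i$-th Krylov block $(s_iI-A)^{-1}B$ from \eqref{projection_V} acting along the transformed direction $\widetilde\ell_i$, i.e. $\widetilde\Pi=V\,D$, where $D$ collects the directions $\widetilde\ell_i$. Undoing the change of basis gives $\Pi=\widetilde\Pi M^{-1}=V\,DM^{-1}=:VT$, which is the claimed factorization. The key step is to argue that $T$ is nonsingular, and this is exactly where the observability of the pair $(L,S)$ enters. By the PBH test applied to the diagonalized pair $(\widetilde L,\Lambda)$, observability is equivalent to $\widetilde\ell_i\neq 0$ for every $i$, so none of the direction blocks collected in $D$ degenerates; combined with the invertibility of $M$, this forces $\rank T=\nu$. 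For the special case $T=I_\nu$ I would simply verify directly that, when $S=\diag(s_1,\dots,s_\nu)$ is already diagonal (so $M=I_\nu$) and $L$ is the row of directions matching \eqref{projection_V}, the matrix $V$ itself satisfies \eqref{eq_Sylvester_Pi}; uniqueness of the Sylvester solution under $\sigma(A)\cap\sigma(S)=\emptyset$ (cf. \cite{desouza-bhattacharyya-LAA1981}) then identifies $\Pi$ with $V$.

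Part~2 is the transpose-dual of part~1: I would apply the same diagonalization-and-resolvent argument to the adjoint Sylvester equation \eqref{eq_Sylvester_Y}, written as $A^*\Upsilon^*+C^*R^*=\Upsilon^*Q^*$, so that the columns of $\Upsilon^*$ become $(s_{\nu+i}I-A^*)^{-1}C^*$ scaled by transformed directions, matching the blocks of $W$ in \eqref{projection_W}; transposing back yields $\Upsilon=TW$ with $T$ nonsingular, now by controllability of $(Q,R)$. I expect the main obstacle to be the nonsingularity of $T$ rather than the factorization itself: one must check that the change of basis diagonalizing $S$ (resp. $Q$) interacts correctly with the tangential directions, and that observability (resp. controllability) is invoked in the sharp PBH form guaranteeing every transformed direction is nonzero. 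A secondary technical point is to confirm that the distinctness of the interpolation points, implicit in $\sigma(S)=\{s_1,\dots,s_\nu\}$, is what licenses the diagonalization; absent distinctness one would instead pass to a Jordan/confluent form and the Krylov blocks would involve higher-order resolvent derivatives.
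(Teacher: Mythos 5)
The paper does not prove this lemma; it is imported verbatim from \cite{astolfi-CDC2010}, so there is no in-paper argument to compare against. Your proposal is the standard proof of this fact and it is essentially correct: diagonalizing $S$ (licit because the $s_i$ are distinct), reading the transformed Sylvester equation column by column as resolvent identities $\widetilde\pi_i=(s_iI-A)^{-1}B\widetilde\ell_i$, and invoking the PBH test on $(\widetilde L,\Lambda)$ to get $\widetilde\ell_i\neq 0$ for all $i$ is exactly how nonsingularity of $T=DM^{-1}$ is obtained; the dual argument for $\Upsilon=TW$ via controllability of $(Q,R)$ is likewise fine. One point you should make explicit: with $V$ taken literally as in \eqref{projection_V}, each block $(s_iI-A)^{-1}B$ has $m$ columns, so your matrix $D$ is $(\nu m)\times\nu$ and $T$ is square only when $m=1$; for $m>1$ the factorization $\Pi=VT$ with $T\in\mathbb{C}^{\nu\times\nu}$ nonsingular requires the tangential Krylov matrix $V=[(s_1I-A)^{-1}B\ell_1\ \cdots\ (s_\nu I-A)^{-1}B\ell_\nu]$ (this is really an imprecision in the paper's \eqref{projection_V}, whose claimed size $n\times\nu$ already presupposes the tangential version, as the paper's own later construction $V=[V_0\ell_0\ \dots\ V_l\ell_l]$ confirms). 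The same caveat applies to the $T=I_\nu$ claim: $AV+BL=VS$ holds column-wise only when the directions $\ell_i$ are absorbed into the columns of $V$ (or $m=1$ with $\ell_i=1$), which is what your phrase ``$L$ is the row of directions matching \eqref{projection_V}'' should be unpacked to mean. Your closing remark about confluent interpolation points requiring Jordan blocks and higher-order resolvents is correct and matches the paper's later treatment of higher-order moments.
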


\noindent Hence, the moments of system \eqref{system} at $\sigma (S)$ and/or $\sigma (Q)$ as in Proposition \ref{prop_mom_time}, are computed as follows:

\begin{cor}\label{cor_momPiY_Krylov}
Consider system \eqref{system}. Let $(L,S)$ be a pair of observable matrices of appropriate dimensions and let $(Q,R)$ be another pair of controllable matrices of appropriate dimension, respectively, such that $\sigma(S)\cap\sigma(Q)$. Then:
\begin{enumerate}[label=\roman*),wide]
\item the moments of system \eqref{system} at $\sigma(S)$ are given by $C\Pi=CVT$, where $\Pi$ is the unique solution of the Sylvester equation \eqref{eq_Sylvester_Pi} and $V$ is given by \eqref{projection_V}.
\item the moments of system \eqref{system} at $\sigma(Q)$ are given by $\Upsilon B=TWB$, where $\Upsilon$ is the unique solution of the Sylvester equation \eqref{eq_Sylvester_Y} and $W$ is given by \eqref{projection_W}. 
\end{enumerate}
\end{cor}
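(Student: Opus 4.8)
The plan is to obtain the corollary as an immediate consequence of Proposition \ref{prop_mom_time} combined with Lemma \ref{lemma_V_and_Pi}, since both ingredients are already established. I would handle the two items separately but symmetrically, treating item ii) as the exact dual of item i), so that essentially the whole argument reduces to a substitution of one matrix factorization into a moment formula.

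For item i), I would first recall from Proposition \ref{prop_mom_time} that the moments of \eqref{system} at the interpolation points $\sigma(S)$ are the entries of $C\Pi$, where $\Pi \in \mathbb{C}^{n \times \nu}$ solves the Sylvester equation \eqref{eq_Sylvester_Pi}. The disjointness hypothesis $\sigma(A) \cap \sigma(S) = \emptyset$ guarantees, as recorded in the preliminary discussion, that this $\Pi$ is the unique solution and has full rank $\nu$. I would then invoke the first part of Lemma \ref{lemma_V_and_Pi}, which supplies a square nonsingular matrix $T \in \mathbb{C}^{\nu \times \nu}$ with $\Pi = VT$, where $V$ is the right Krylov projection \eqref{projection_V} associated with the points $\{s_1,\dots,s_\nu\} = \sigma(S)$. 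Substituting this factorization into the moment expression gives $C\Pi = C(VT) = CVT$, which is exactly the claimed identity.

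For item ii), the argument is entirely parallel using the dual objects. Proposition \ref{prop_mom_time} identifies the moments at $\sigma(Q)$ with the entries of $\Upsilon B$, where $\Upsilon$ is the unique full-rank solution of \eqref{eq_Sylvester_Y} under $\sigma(A) \cap \sigma(Q) = \emptyset$. The second part of Lemma \ref{lemma_V_and_Pi} then furnishes a nonsingular $T$ with $\Upsilon = TW$, $W$ being the left Krylov projection \eqref{projection_W} built from $\sigma(Q)$. Substituting yields $\Upsilon B = (TW)B = TWB$, completing the argument.

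I do not expect a genuine obstacle here: the content is bookkeeping rather than a new estimate or construction. The only points deserving care are to match the interpolation points used to define $V$ and $W$ with the spectra $\sigma(S)$ and $\sigma(Q)$ respectively, and to confirm that the matrix $T$ appearing in the corollary is precisely the one produced by Lemma \ref{lemma_V_and_Pi}. I would also make explicit the implicit role of the hypotheses $\sigma(A) \cap \sigma(S) = \emptyset$ and $\sigma(A) \cap \sigma(Q) = \emptyset$, since they simultaneously secure the uniqueness of $\Pi, \Upsilon$ and the well-definedness of the resolvents $(s_iI - A)^{-1}$ entering $V$ and $W$. Given this, the write-up should be short.
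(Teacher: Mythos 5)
Your proposal is correct and is exactly the argument the paper intends: the corollary is stated immediately after Lemma \ref{lemma_V_and_Pi} with no separate proof, the intended derivation being precisely the substitution of $\Pi=VT$ (resp.\ $\Upsilon=TW$) into the moment characterizations $C\Pi$ and $\Upsilon B$ of Proposition \ref{prop_mom_time}. Your added care about matching the interpolation points to $\sigma(S)$, $\sigma(Q)$ and about the role of $\sigma(A)\cap\sigma(S)=\emptyset$, $\sigma(A)\cap\sigma(Q)=\emptyset$ for uniqueness and for the resolvents in $V$, $W$ is consistent with the paper's preliminary discussion.
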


\noindent The results of Proposition \ref{def_VW}, Lemma \ref{lemma_V_and_Pi}, and Corollary \ref{cor_momPiY_Krylov} also hold for higher order moments at a set of interpolation points $s_1,\dots,s_l\in\mathbb{C}$ which are not poles of the given transfer function $K$. Let $s_i$, $i=0,...,l$ and $l\geq 0$. To this end, take $j_i\geq 0$ such that:

\begin{equation}\label{eq_mu}
\sum_{i=0}^l (j_i -1)=\nu.
\end{equation}
For each $i$, let $\eta_0(s_i),...,\eta_{j_i}(s_i)$ denote the first $j_i+1$ moments of the system  defined by \eqref{system} at the given points $s_i$. Then, these moments are characterized by the matrix $CV$, with:
\begin{align*}
V&=[V_0\ell_0\ \dots\ V_l\ell_l]\in\mathbb{C}^{n\times \nu},\\
V_i&=[(s_iI-A)^{-1}B\ (s_iI-A)^{-2}B\ \dots\ (s_iI-A)^{-j_i}B]\in\mathbb{C}^{n\times j_i}.
\end{align*}
Furthermore, $V$ is the solution of the Sylvester equation \eqref{eq_Sylvester_Pi}, for $S=\diag(\Sigma_0,\ \dots\ \Sigma_l),$ with $\Sigma_i\in\mathbb{C}^{j_i\times j_i}$ a Jordan block matrix of the eigenvalue $s_i$ with multiplicity $j_i$ and $L=[\ell_0\ \dots\ \ell_l]\in\mathbb{R}^{1\times \nu}$, with $\ell_i=[1\ 0\ \dots\ 0]\in\mathbb{R}^{1\times j_i}$. The results follow directly from the arguments used in \cite{astolfi-TAC2010,i-astolfi-colaneri-SCL2014}. Note that the moment matching conditions {\eqref{eq_MM_CPi} and \eqref{eq_MM_YB}} are equivalent  \cite{gallivan-vandendorpe-vandooren-SIAM2004}, up to a constant coordinate transformation to the right tangential interpolation conditions:
\begin{align*}
K(s_i)\ell_{j_i}  &= \widehat{K}(s_i)\ell_{j_i},\quad K'(s_i)\ell_{j_i}  = \widehat{K}'(s_i)\ell_{j_i}, \\
                   & \cdots \\
\frac{\text{d}^{j_i}}{\text{d}s^{j+i}}K(s)\ell_{j_i} &= \frac{\text{d}^{j_i}}{\text{d}s^{j+i}}\widehat K(s)\ell_{j_i},\quad i=0:l
\end{align*}
and the left tangential interpolation conditions:
\begin{align*}
r_{j_i}K(s_i)  &= r_{j_i}\hat{K}(s_i),\quad r_{j_i}K'(s_i) = r_{j_i}\hat{K}'(s_i), \\
                   & \cdots \\
r_{j_i}\frac{\text{d}^{j_i}}{\text{d}s^{j+i}}K(s) &= r_{j_i}\frac{\text{d}^{j_i}}{\text{d}s^{j+i}}\widehat K(s),\quad i=0:l.
\end{align*}
Note that these reduced order models are parameterized in $L$ and $R$, respectively. Their choice is important for computing subfamilies of models that preserve specific properties, for establishing appropriate directions for interpolation and for finding the most accurate approximants. Note that we can also do moment matching with prescribed poles. The poles of the reduced order model may be placed, for example, in the the open left half plane, by properly selecting $G$ or $H$ respectively, yielding the subfamily of stable reduced order models that match the moments of \eqref{system} at $\sigma(S)$ or $\sigma(Q)$, respectively.
\begin{proposition}
\cite{astolfi-TAC2010}
\label{prop_assign_poles} 
Consider an LTI system \eqref{system}. Furthermore, consider the families of reduced order models $\widehat\Sigma$ and $\bar\Sigma$ that match the moments of \eqref{system} at $\sigma(S)$ and $\sigma(Q)$, respectively. Let $\lambda_i\in\mathbb{C}$, $i=1,...,\nu$, be such that $\lambda_i\notin \sigma(S)$ or $\lambda_i\notin \sigma(Q)$. Then:
\begin{enumerate}
\item There exists a subfamily of models of the form $\widehat\Sigma$, with the property that the spectrum of each model contains $\lambda_1,...,\lambda_\nu$, i.e., there exists $G$ such that $\{\lambda_1,...,\lambda_\nu\}=\sigma(S-GL)$.
\item There exists a subfamily of models of the form $\bar\Sigma$, with the property that the spectrum of each model contains $\lambda_1,...,\lambda_\nu$, i.e., there exists $H$ such that $\{\lambda_1,...,\lambda_\nu\}=\sigma(Q-RH)$.
\end{enumerate}
\end{proposition}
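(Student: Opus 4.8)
The plan is to read both assertions as classical eigenvalue-assignment (pole-placement) statements and to settle them from the structural hypotheses that $(L,S)$ is observable and $(Q,R)$ is controllable, combined with a duality (transpose) argument for the first part. The only real difference between the two items is the side on which the free gain acts: in $S-GL$ the free parameter $G$ multiplies the fixed $L$ from the left, whereas in $Q-RH$ the free parameter $H$ multiplies the fixed $R$ from the right.

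Item 2 I would handle directly. The matrix $Q-RH$ is of the form $A-BK$ with $A=Q$, $B=R$ fixed and gain $K=H$; since the pair $(Q,R)$ is controllable, the classical pole-placement theorem guarantees that for any prescribed multiset $\{\lambda_1,\dots,\lambda_\nu\}\subset\mathbb{C}$ there is a gain $H\in\mathbb{C}^{p\times\nu}$ realizing $\sigma(Q-RH)=\{\lambda_1,\dots,\lambda_\nu\}$. Concretely, one matches the target characteristic polynomial $\prod_{i=1}^{\nu}(s-\lambda_i)$: for $p=1$ this is immediate from the controller canonical form (Ackermann's formula), and for $p>1$ one first invokes Heymann's lemma to reduce the controllable pair $(Q,R)$ to an equivalent single-input controllable pair and then applies the single-input construction.

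Item 1 I would reduce to Item 2 by transposition. Writing $(S-GL)^{T}=S^{T}-L^{T}G^{T}$ displays the task as assigning the spectrum of $A-BK$ with $A=S^{T}$, $B=L^{T}$ and gain $K=G^{T}$. The pair $(S^{T},L^{T})$ is controllable precisely because $(L,S)$ is observable: the controllability matrix $[\,L^{T}\ \ S^{T}L^{T}\ \cdots\ (S^{T})^{\nu-1}L^{T}\,]$ is the transpose of the observability matrix of $(L,S)$ and hence has full rank $\nu$. Applying the pole-placement theorem yields $G^{T}$ with $\sigma(S^{T}-L^{T}G^{T})=\{\lambda_1,\dots,\lambda_\nu\}$, and since a matrix and its transpose share the same spectrum, the corresponding $G$ satisfies $\sigma(S-GL)=\{\lambda_1,\dots,\lambda_\nu\}$.

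It remains to record the role of the hypotheses $\lambda_i\notin\sigma(S)$ (resp. $\lambda_i\notin\sigma(Q)$): they are not used in producing the gain but in certifying that the assigned model still belongs to the admissible family. Indeed they force the spectral-disjointness constraint $\sigma(S-GL)\cap\sigma(S)=\emptyset$ of \eqref{redmod_CPi} (resp. $\sigma(Q-RH)\cap\sigma(Q)=\emptyset$ of \eqref{redmod_YB}), which is exactly the condition under which the moment-matching characterization of Proposition \ref{prop_FGL} applies, so that the resulting $\widehat\Sigma$ (resp. $\bar\Sigma$) genuinely matches the moments at $\sigma(S)$ (resp. $\sigma(Q)$). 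The only delicate point in the whole argument is the multi-input ($m>1$ or $p>1$) pole assignment, i.e., the passage through Heymann's lemma; everything else is linear-algebraic bookkeeping together with the transpose identity relating observability of $(L,S)$ to controllability of $(S^{T},L^{T})$.
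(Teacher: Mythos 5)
Your proof is correct: the paper itself offers no proof of this proposition, citing it from \cite{astolfi-TAC2010}, and your argument---direct pole placement for $(Q,R)$ controllable, plus the transpose/duality reduction of $\sigma(S-GL)$ to assignment for the controllable pair $(S^T,L^T)$---is exactly the standard eigenvalue-assignment reasoning underlying that reference. Your closing remark correctly identifies that the hypotheses $\lambda_i\notin\sigma(S)$ (resp.\ $\lambda_i\notin\sigma(Q)$) serve only to keep the assigned model inside the admissible family by enforcing $\sigma(S-GL)\cap\sigma(S)=\emptyset$ (resp.\ $\sigma(Q-RH)\cap\sigma(Q)=\emptyset$).
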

 
\noindent Based on the previous discussion in the following we make the following working assumption: Matrices $\Pi$ and $\Upsilon$, unique solutions of \eqref{eq_S} are formed using Krylov projections $V$ and $W$ in \eqref{projection_W_V}, respectively, by Lemma \ref{lemma_V_and_Pi}. Furthermore, the moments of system \eqref{system} at $\sigma(S)$ and at $\sigma(Q)$ are computed efficiently based on Corollary \ref{cor_momPiY_Krylov}, respectively. It is not required to explicitly solve equations \eqref{eq_S}.

%%%%%%%%%%%%%%%%%%%%%%%%%%%%%%%%%%%%%%%%
%%%%%%%%%%%%%%%%%%%%%%%%%%%%%%%%%%%%%%%%%

\subsection{$H_2$-norm based on the Gramians of linear systems}
Let us also  briefly recall the definition of the $H_2$-norm of an LTI system and its computation based on the controllability and the observability Gramians, respectively. Given the LTI system \eqref{system}, the controllability Gramian $W$ and the observability
Gramian $M$ are the solutions of the following Lyapunov  equations \cite{antoulas-2005}:
\begin{subequations}\label{eq_Gramians}
\begin{align}
AW+WA^T+BB^T &=0, \\
A^TM+MA+C^TC &=0.
\end{align}
\end{subequations}

\noindent Let ${\cal H}_2$ denote the Hilbert space of complex functions analytic in the open right-half plane and square integrable. Note that the transfer functions $K$ and $\widehat K$ are elements of ${\cal H}_2$. By \cite{gugercin-antoulas-beattie-SIAM2008}  ${H}_2$-norm is defined as:
$$\|K\|_{{H}_2}=\sqrt{\int_{-\infty}^{\infty} |K(j\omega)|^2 \opd\omega}.$$
The following result provides a computation formula for  ${H}_2$-norm of a rational transfer function $K$.
\begin{lemma}
\label{lema_2norm_gram}
\cite{gugercin-antoulas-beattie-SIAM2008}
Consider the LTI system \eqref{system} with the transfer function \eqref{tf}. Then:
\begin{equation}\label{eq_2norm_gram}
\|K\|_{{H}_2}^2=C^TWC=B^TMB,
\end{equation}
where $W$ and $M$ the solutions of equations \eqref{eq_Gramians}.
\end{lemma}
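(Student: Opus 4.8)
The plan is to pass from the frequency-domain definition of the $H_2$-norm to a time-domain integral and then to recognize that integral as the trace of a Gramian. First I would invoke Parseval's (Plancherel's) theorem: since $A$ is stable, $K\in\mathcal{H}_2$ is the Fourier--Laplace transform of the impulse response $h(t)=Ce^{At}B$ for $t\ge 0$ (and $h(t)=0$ for $t<0$ by causality), so that, with the normalization implicit in the definition of $\|\cdot\|_{H_2}$, the frequency integral $\int_{-\infty}^{\infty}|K(j\omega)|^2\,\opd\omega$ equals the time-domain integral $\int_0^\infty \tr\!\left(h(t)h(t)^T\right)\opd t=\int_0^\infty \tr\!\left(Ce^{At}BB^Te^{A^Tt}C^T\right)\opd t$.

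Second, I would introduce the integral representation of the controllability Gramian,
$$W=\int_0^\infty e^{At}BB^Te^{A^Tt}\,\opd t,$$
which converges precisely because $\sigma(A)\subset\mathbb{C}^-$ forces exponential decay of $\|e^{At}\|$. Using the identity $\frac{\opd}{\opd t}\bigl(e^{At}BB^Te^{A^Tt}\bigr)=Ae^{At}BB^Te^{A^Tt}+e^{At}BB^Te^{A^Tt}A^T$ together with the fundamental theorem of calculus, this $W$ satisfies $AW+WA^T=-BB^T$; since stability guarantees that the Lyapunov equation \eqref{eq_Gramians} has a unique solution, this integral form coincides with the Gramian $W$ defined there. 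Pulling $C$ and $C^T$ outside the convergent integral by linearity then gives $\|K\|_{H_2}^2=\tr\!\left(CWC^T\right)$.

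Third, I would obtain the dual expression from the cyclic invariance of the trace: rewriting $\tr\!\left(Ce^{At}BB^Te^{A^Tt}C^T\right)=\tr\!\left(B^Te^{A^Tt}C^TCe^{At}B\right)$ inside the integral and introducing the observability Gramian $M=\int_0^\infty e^{A^Tt}C^TCe^{At}\,\opd t$, which by the same differentiation argument solves the second equation in \eqref{eq_Gramians}, yields $\|K\|_{H_2}^2=\tr\!\left(B^TMB\right)$. In the single-input single-output case the traces are superfluous and one recovers the scalar identity $CWC^T=B^TMB$ exactly as stated.

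The main obstacle is the rigorous justification of the Parseval step in the matrix-valued setting together with the interchange of trace, integration, and the constant matrices $C$, $B$: all of these rest on absolute convergence of the integrals, which is exactly what the standing assumption $\sigma(A)\subset\mathbb{C}^-$ secures. Once convergence is in hand, the only remaining work is the differentiation-under-the-integral identity verifying the Lyapunov equation and the uniqueness of its solution, both of which are routine.
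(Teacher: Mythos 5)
Your proof is correct and follows the standard argument: Parseval/Plancherel to pass from the frequency-domain definition to the time-domain integral of the impulse response, the integral representations $W=\int_0^\infty e^{At}BB^Te^{A^Tt}\,\opd t$ and $M=\int_0^\infty e^{A^Tt}C^TCe^{At}\,\opd t$ verified against the Lyapunov equations \eqref{eq_Gramians} by differentiating under the integral, and cyclicity of the trace to obtain the dual expression. The paper gives no proof of this lemma at all --- it is imported verbatim from \cite{gugercin-antoulas-beattie-SIAM2008} --- so there is nothing internal to compare against; your argument is the one found in that reference. Two small remarks: your trace formulation $\tr\left(CWC^T\right)=\tr\left(B^TMB\right)$ quietly corrects the dimensional slip in the paper's statement \eqref{eq_2norm_gram}, which writes $C^TWC$ (not conformable for $p\neq n$) instead of $CWC^T$; and you are right to flag the normalization, since the paper's definition of $\|K\|_{H_2}$ omits the factor $\tfrac{1}{2\pi}$ that Parseval's identity requires for \eqref{eq_2norm_gram} to hold exactly.
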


%%%%%%%%%%%%%%%%%%%%%%%%%%%%%%%%%%%%%%%%%%%%%%%%

\section{$H_2$ model reduction by moment matching and  optimization}
\label{sect_H2formulation}
This section presents the main theoretical contribution of our paper. Based on the previous parametrizations of the reduced models and using  the $H_2$-norm of the approximation error as objective function, we write several  optimization problems  to optimally  determine the approximation yielding the minimal $H_2$-norm error.  We  also derive the KKT (optimality) conditions for this optimization problems in terms of the controllability and the observability Gramians  of the error system. Finally, based on these optimality conditions we propose several  gradient-based methods for solving the corresponding optimization problems, with mathematical guarantees on its  convergence. Due to the symmetry of families $\widehat{\Sigma}$ and $\bar{\Sigma}$, in the sequel we focus on the parametrization of reduced system $\widehat{\Sigma}$ in $(S,G,L)$. The general optimal $H_2$ model reduction problem by moment matching is formulated as follows:

\begin{prob}\label{prob_optH2_general}
Given an LTI system \eqref{system} with the transfer function $K$ given in \eqref{tf}, find a reduced order LTI system $\widehat{\Sigma}_{(S,G,L)}$ in the family  \eqref{redmod_CPi} with the transfer function $\widehat K$ defined in \eqref{tf_redmod_CPi}, given in terms of the interpolations points $\sigma(S)$ and  free parameters $L$ and $G$, that match  $\nu$ \emph{fixed} moments of \eqref{system} at $\sigma(S)$ and the following conditions are satisfied:
\begin{enumerate}[label=(\roman*),wide,nosep]
\item the $H_2$ norm of the error system  $\|K-\widehat K\|_2$ is minimal
\item the reduced model $\widehat K$ is stable, i.e. $\sigma(S-GL) \subset \mathbb C^-$
\item the pair $(S,L)$ is observable, $\sigma(S) \cap \sigma(A) = \emptyset$, and $\sigma(S) \cap \sigma(S-GL) = \emptyset$. \fin
\end{enumerate}
\end{prob}

\noindent  We can also consider a relaxed formulation of this problem.  Assuming that the pair $(L,S)$ is fixed a priori for the reduced order model \eqref{redmod_CPi}, such that $(L,S)$ is observable and $\sigma(S) \cap \sigma(A) = \emptyset$, we search for a reduced order model $\widehat{\Sigma}_G$ in the family  \eqref{redmod_CPi} parameterized only in $G$, that yields the minimal $H_2-$norm of the approximation error. Hence, we compute the best $\nu$ order model from the family $\widehat \Sigma_G$, matching fixed $\nu$ moments of \eqref{system}. Thus, we may formulate the following particular instance of Problem \ref{prob_optH2_general}, separately:
\begin{prob}
\label{prob_optH2_G}
\emph{Fix} $S\in\mathbb{C}^{\nu \times \nu}$ and $L\in\mathbb{C}^{m \times \nu}$ two matrices  such that the pair $(L,S)$ is observable and $\sigma(S) \cap \sigma(A) = \emptyset$. Given the LTI system \eqref{system} with the transfer function \eqref{tf} and the family of reduced order models $\widehat \Sigma_G$ as in \eqref{redmod_CPi} that match the $\nu$ \emph{fixed} moments of \eqref{system} at $\sigma(S)$, find the free parameters defined in terms of  matrix $G$ such that the following conditions are satisfied:
\begin{enumerate}[label=(\roman*),wide,nosep]
 \item the $H_2$ norm of the error system $\|K-\widehat K\|_2$ is minimal;
\item the reduced model $\widehat K$ is stable,  i.e., $\sigma(S-GL) \subset \mathbb C^-$;
\item $\sigma(S) \cap \sigma(S-GL) = \emptyset$. \fin
\end{enumerate}
\end{prob}

\noindent Problems \ref{prob_optH2_general} and  \ref{prob_optH2_G} can be recast  in terms of the computation of the ${\cal H}_2$ norm of the Gramians of the realization of the \emph{error system}:
$$\K = K - \widehat K,$$
with $\widehat K$ from \eqref{tf_redmod_CPi}, parameterized in $(S,G,L)$ or $G$, respectively. Let $(\A_e,\B_e,\C_e)$ be a state-space realization of the error transfer function $\K$: $$\K(s)=\C_e(sI-\A_e)^{-1}\B_e,$$ where
\begin{equation}\label{eq_err_realization}
\A_e=\begin{bmatrix} A & 0 \\ 0 & S - GL \end{bmatrix},\ \B_e=\begin{bmatrix} B \\ G \end{bmatrix},\ \C_e=C\begin{bmatrix} I &  -\Pi\end{bmatrix}.
\end{equation}

\noindent Denote the controllability and the observability Gramians of \eqref{eq_err_realization} by $\W$ and $\M$, respectively. They are solutions of the following Lyapunov equations:
\begin{subequations}\label{eq_Gramians_err}
\begin{align}
\label{W_Lyap}
\A_e\W+\W\A_e^T+\B_e\B_e^T &=0, \\
\label{M_Lyap}
\A_e^T\M+\M\A_e+\C_e^T\C_e &=0.
\end{align}
\end{subequations}

\noindent Let us also recall a standard result for Lyapunov equations:
\begin{lemma}
\label{lema:lyap_sol}
Let $\A_e$ be given stable matrix, i.e., $\sigma(\A_e) \subset \mathbb C^-$. Then, there exist unique solutions $\W$ and $\M$  positive semidefinte of \eqref{W_Lyap} and \eqref{M_Lyap}, respectively. 
\end{lemma}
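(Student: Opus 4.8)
The plan is to prove the two claims symmetrically: equation \eqref{M_Lyap} for $\M$ is obtained from \eqref{W_Lyap} for $\W$ by the substitution $\A_e \mapsto \A_e^T$ and $\B_e\B_e^T \mapsto \C_e^T\C_e$, and since $\sigma(\A_e^T)=\sigma(\A_e)\subset\mathbb{C}^-$, it suffices to treat the controllability Gramian equation \eqref{W_Lyap}. For this single equation I would establish two things separately: \textbf{(a)} existence and uniqueness of a solution, and \textbf{(b)} that this solution is positive semidefinite.

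For existence and uniqueness I would argue by linearity. The map $\W \mapsto \A_e\W + \W\A_e^T$ is a linear operator on the space of $(n+\nu)\times(n+\nu)$ matrices; upon vectorizing, it is represented by the Kronecker-sum matrix $I\otimes\A_e + \A_e\otimes I$, whose eigenvalues are exactly the pairwise sums $\lambda_i(\A_e)+\lambda_j(\A_e)$. Since $\A_e$ is stable, every $\lambda_i(\A_e)$ has strictly negative real part, so each sum has strictly negative real part and in particular is nonzero. Hence the operator is nonsingular, which yields existence and uniqueness of the solution $\W$ of \eqref{W_Lyap} (and likewise $\M$ of \eqref{M_Lyap}).

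For positive semidefiniteness I would produce the solution explicitly via the integral representation
\begin{equation}\label{eq_Wint}
\W=\int_0^\infty e^{\A_e t}\,\B_e\B_e^T\,e^{\A_e^T t}\,\opd t.
\end{equation}
Stability guarantees a bound $\|e^{\A_e t}\|\le c\,e^{-\alpha t}$ for some $c>0$, $\alpha>0$, so the integrand decays exponentially and the improper integral converges. To see that \eqref{eq_Wint} solves \eqref{W_Lyap}, I would differentiate the integrand, $\frac{\opd}{\opd t}\big(e^{\A_e t}\B_e\B_e^T e^{\A_e^T t}\big)=\A_e e^{\A_e t}\B_e\B_e^T e^{\A_e^T t}+e^{\A_e t}\B_e\B_e^T e^{\A_e^T t}\A_e^T$, and integrate from $0$ to $\infty$; by the fundamental theorem of calculus the left-hand side equals $\big[e^{\A_e t}\B_e\B_e^T e^{\A_e^T t}\big]_0^\infty=0-\B_e\B_e^T$, the boundary term at infinity vanishing by stability, which gives precisely $\A_e\W+\W\A_e^T+\B_e\B_e^T=0$. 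By the uniqueness established above, this integral is \emph{the} solution. Positive semidefiniteness is then immediate: for any vector $v$, $v^T\W v=\int_0^\infty \|\B_e^T e^{\A_e^T t}v\|^2\,\opd t\ge 0$. The identical argument with $\A_e\mapsto\A_e^T$ and $\B_e\mapsto\C_e^T$ yields $\M=\int_0^\infty e^{\A_e^T t}\C_e^T\C_e\,e^{\A_e t}\,\opd t\succeq 0$.

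The routine linear algebra (the Kronecker eigenvalue formula and differentiation under the integral) is standard; the only point requiring care, and the one I would emphasize, is the exploitation of stability at two distinct places — convergence of the improper integral \eqref{eq_Wint} and the vanishing of the boundary term at $t=\infty$ — both of which rest on the exponential-decay estimate $\|e^{\A_e t}\|\le c\,e^{-\alpha t}$ that holds precisely because $\sigma(\A_e)\subset\mathbb{C}^-$.
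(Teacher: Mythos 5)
Your proof is correct. The paper offers no proof of this lemma at all --- it is explicitly ``recalled'' as a standard result on Lyapunov equations (implicitly deferring to references such as \cite{antoulas-2005}) --- so there is nothing to compare against; your argument is the classical one, with uniqueness via the nonsingularity of the Kronecker sum $I\otimes\A_e+\A_e\otimes I$ and existence plus positive semidefiniteness via the integral representation $\W=\int_0^\infty e^{\A_e t}\B_e\B_e^T e^{\A_e^T t}\,\opd t$, and both halves, including the symmetry reduction of \eqref{M_Lyap} to \eqref{W_Lyap}, are carried out correctly.
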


\noindent  Below, we partition  $\W$ and $\M$ following the block structure of matrix $\A_e$:
\begin{equation}
\W=\begin{bmatrix} W_{11} & W_{12} \\ W_{12}^T & W_{22}\end{bmatrix},\;\;   \M=\begin{bmatrix} M_{11} & M_{12} \\ M_{12}^T & M_{22}\end{bmatrix}.
\end{equation}

%%%%%%%%%%%%%%%%%%%%%%%%%%%%%%%%%%%%%%%%%%%%%%%%%%%%%%%%%

\subsection{Optimization formulation of Problem \ref{prob_optH2_general}}
In this section we propose an optimization formulation for the general Problem \ref{prob_optH2_general}, where recall that the parametrisation of the reduced order model is done through matrices $S, G$ and $L$. Let us define the feasible set for the reduced model:
\[  {\cal R}= \left\{ (S, G, L): \; (S,L) \ \text{obs.}, \; \sigma(S-GL) \subset \mathbb C^-, \; \sigma(S) \cap \sigma(A) = \emptyset, \; \text{and} \; \sigma(S) \cap \sigma(S-GL) = \emptyset \right\}.  \]
By \eqref{eq_2norm_gram}, the Problem  \ref{prob_optH2_general} becomes:
\begin{align}\label{eq_2norm_gram_err_general0}
\min_{(S,G,L) \in {\cal R}} \|\K\|_2^2 & =
\min_{(S,G,L) \in {\cal R}, \ \W \ \text{s.t.}\; \eqref{W_Lyap}} C \begin{bmatrix} I &  -\Pi\end{bmatrix}\begin{bmatrix} W_{11} & W_{12} \\ W_{12}^T & W_{22}\end{bmatrix}\begin{bmatrix} I \\  -\Pi^T\end{bmatrix}C^T \nonumber\\
&= \min_{(S,G,L) \in {\cal R},\  \M \ \text{s.t.} \; \eqref{M_Lyap}} \begin{bmatrix} B^T & G^T \end{bmatrix} \begin{bmatrix} M_{11} & M_{12} \\ M_{12}^T & M_{22}\end{bmatrix} \begin{bmatrix} B \\ G \end{bmatrix}.\nonumber
\end{align}
We now consider the problem formulation in terms of the observability Gramian $\M$, written explicitly in matrix form as:
\begin{align}
\label{eq_2norm_gram_err_general_m0}
& \min_{(S,G,L,\M,\Pi)} \text{Trace}(\B_e^T \M \B_e) \\
& \text{s.t.:} \;(S,L) \ \text{observable},\;  \sigma(S) \cap \sigma(A) = \emptyset, \; \text{and} \; \sigma(S) \cap \sigma(S-GL) = \emptyset, \nonumber\\
& \qquad A \Pi + B L = \Pi S,  \; \sigma(S-GL) \subset \mathbb C^-, \; \A_e^T \M + \M \A_e + \C_e^T \C_e =0. \nonumber
\end{align}
Note that, since $\sigma(S)\cap\sigma(A)=\emptyset$, the Sylvester equation \eqref{eq_Sylvester_Pi} has a unique solution $\Pi =\Pi(S,L)$. However, by our working  assumptions, \eqref{eq_Sylvester_Pi} need \emph{not} be solved since, by Lemma \ref{lemma_V_and_Pi}, we can take $\Pi=VT$, with $V$ from \eqref{projection_V} and $T$ some non-singular matrix. Therefore, Problem \ref{prob_optH2_general} can be reformulated equivalently as:
\begin{align}
\label{eq_2norm_gram_err_general_m00}
& \min_{(S,G,L,\M)} \text{Trace}(\B_e^T \M \B_e) \\
& \text{s.t.:} \; (S,L) \ \text{observable}, \; \sigma(S) \cap \sigma(A) = \emptyset, \; \text{and} \; \sigma(S) \cap \sigma(S-GL) = \emptyset, \nonumber\\
& \qquad   \sigma(S-GL) \subset \mathbb C^-, \;\; \A_e^T \M + \M \A_e + \C_e^T \C_e =0,  \nonumber
\end{align}
with $\Pi=VT$,  for $V$ as in \eqref{projection_V} and $T$ some fixed non-singular matrix.    However,  it is difficult to deal with the restrictions (iii) in Problem \ref{prob_optH2_general}, i.e., the constraints $(S,L)$ observable,  $\sigma(S) \cap \sigma(A) = \emptyset$, and $\sigma(S) \cap \sigma(S-GL) = \emptyset$. Hence,  we can consider a triplet $(S,L,G)$ such  that the  pair $(S,L)$ is observable.  Then, the unknowns are the diagonal matrix $S$ and the vector $G$, while $L$ is fixed and $\Pi=VT$.  For example,  without loss of generality, we can consider a canonical form for the triplet $(S,L,G)$ as:
\[ S = \diag(s_1,s_2,\dots,s_\nu), \; L=[\ell_1 \ \ell_2 \dots \ell_\nu] \in \mathbb{R}^{m \times \nu} \; \text{and} \; G \in \mathbb{R}^{\nu \times m}, \]
such  that the  pair $(S,L)$ is automatically observable, provided that we choose $\ell_i \not = 0$,  regardless of the values of $s_i$ for all $i$. In this case we take $\Pi = V$. Moreover, the constraints $\sigma(S) \cap \sigma(A) = \emptyset$ and $\sigma(S) \cap \sigma(S-GL) = \emptyset$ are not imposed in the numerical algorithms and are usually checked at the solution of the problem.  Therefore, in the next sections we  provide several numerical methods for solving the general non-convex optimization problem \eqref{eq_2norm_gram_err_general_m00}, with unknowns   $\M, S$ and $G$, while $L=[\ell_1 \ \ell_2 \dots \ell_\nu]$ is fixed a priori.  Moreover, following our discussion above,  the constraints $(S,L)$ observable,  $\sigma(S) \cap \sigma(A) = \emptyset$ and $\sigma(S) \cap \sigma(S-GL) = \emptyset$ are also removed. In this case from \eqref{eq_2norm_gram_err_general_m00} we get the simplified non-convex optimization formulation for Problem \ref{prob_optH2_general} analyzed in the sequel:
\begin{align}
\label{eq_2norm_gram_err_general_m}
& \min_{(S,G,\M)} \text{Trace}(\B_e^T \M \B_e) \\
& \text{s.t.:} \;  \sigma(S-GL) \subset \mathbb C^-, \;\; \A_e^T \M + \M \A_e + \C_e^T \C_e =0.  \nonumber
\end{align}
Note that any  (local) solution  of optimization problem \eqref{eq_2norm_gram_err_general_m} satisfying  the constraints $(S,L)$ observable,  $\sigma(S) \cap \sigma(A) = \emptyset$ and $\sigma(S) \cap \sigma(S-GL) = \emptyset$ is also a local/global solution of Problem \ref{prob_optH2_general}.  How to efficiently numerically tackle  the aforementioned constraints (i.e.,  non-convex problem \eqref{eq_2norm_gram_err_general_m00}) so that to easily include them in an optimization algorithm remains an open question that will be investigated in the future. Let:
\begin{align} 
\label{eq:notation}
{\cal X}=\begin{bmatrix} S & G\end{bmatrix} \in \mathbb{R}^{\nu \times (\nu +m)},\quad {\cal L}=\begin{bmatrix} I_\nu \\ -L\end{bmatrix} \in \mathbb R^{(\nu+m) \times \nu} \quad \text{and}\quad {\cal E}=\begin{bmatrix}0_{\nu \times m} \\  I_m \end{bmatrix} \in \mathbb R^{(\nu+m) \times m},
\end{align} 
yielding
\begin{align}
\label{eq_XY}
G &= {\cal X}  {\cal E} \;\; \text{and} \;\; S-GL = {\cal XL}.
\end{align}
In order to clearly see the dependence on ${\cal X}$,  let us also define:
\[ {\cal A}({\cal X}) = \A_e= \begin{bmatrix} A & 0 \\ 0 & {\cal XL} \end{bmatrix}, \;\;   {\cal B}({\cal X}) = \B_e\B_e^T= \begin{bmatrix} B \\ {\cal X}  {\cal E} \end{bmatrix} \begin{bmatrix} B \\ {\cal X}  {\cal E}  \end{bmatrix}^T, \;\; {\cal C} = \C_e^T  \C_e.  \]
In the next sections we present several (equivalent) reformulations of the nonconvex problem \eqref{eq_2norm_gram_err_general_m}, accompanied by their first-order  optimality conditions.

%%%%%%%%%%%%%%%%%%%%%%%%%%%%%%%%%%%%%%%%%%

\subsubsection{ KKT approach}
Recall that our goal is to find a (local) minimum point of the non-convex problem \eqref{eq_2norm_gram_err_general_m}.  However, for a non-convex problem a minimum point is among the  KKT points, i.e. it satisfies the KKT system.  Using $\text{Trace}(MN) = \text{Trace}(NM)$ for any matrices $M, N$ of compatible sizes, in the sequel we derive the KKT system for  the non-convex problem \eqref{eq_2norm_gram_err_general_m}, which in compact form, in terms of  ${\cal X}$, can be written as follows: 
\begin{align}
\label{eq_2norm_gram_err_general_mm}
& \min_{(\M,{\cal X})} \text{Trace}( \M {\cal B}({\cal X})) \\
& \text{s.t.:} \; \;  {\cal X} \in {\cal D}_L, \;\; {\cal A}^T({\cal X})  \M + \M {\cal A}({\cal X})  + {\cal C} =0, \nonumber
\end{align}
where the open set  ${\cal D}_L =\{  {\cal X}:  \;  \sigma({\cal X L})  \subset \mathbb C^- \}$ and recall that $L$ is fixed a priori.  The Lagrangian function associated to problem  \eqref{eq_2norm_gram_err_general_m}, or equivalently \eqref{eq_2norm_gram_err_general_mm}, is given by:
\begin{equation}
\label{eq_Lagrange_KKT}
\Gamma(\W, \M, {\cal X}) = \text{Trace}( \M {\cal B}({\cal X}) ) + \text{Trace} (\W ( {\cal A}^T({\cal X}) \M + \M {\cal A}({\cal X}) + {\cal C} )),
\end{equation}
where the multiplier $\W$ is associated to the equality constraint in  \eqref{eq_2norm_gram_err_general_mm}.  Then, we write the optimization problem  \eqref{eq_2norm_gram_err_general_mm} into the max-min form:
\begin{equation}
\label{eq_2norm_gram_err_G_Lagrange_KKT}
\max_{\W}  \min_{\M, {\cal X} \in {\cal D}_L}  \Gamma(\W, \M, {\cal X}).
\end{equation}
From standard optimization arguments we know that for any solution (also called KKT or saddle point)  $(\W, \M, {\cal X})$ of problem \eqref{eq_2norm_gram_err_G_Lagrange_KKT}, we have that  $(\M, {\cal X})$ is a (possibly local) minimum point of the original problem \eqref{eq_2norm_gram_err_general_mm} \cite{Nes:04}. Moreover,  if $(\W, \M, {\cal X})$ is a solution of problem \eqref{eq_2norm_gram_err_G_Lagrange_KKT} with ${\cal X} \in {\cal D}_L$, then it must satisfy the KKT system:
\[ \nabla  \Gamma (\W, \M, {\cal X}) =0 \quad \iff \quad  
\begin{cases}
\nabla_{\W}  \Gamma (\W, \M, {\cal X}) =0\\
\nabla_{(\M, {\cal X})}  \Gamma (\W, \M, {\cal X}) =0.
\end{cases}
\]

\noindent The next theorem provides the explicit form of  the  \emph{KKT system}:
\begin{theorem}
The KKT system of optimization problem  \eqref{eq_2norm_gram_err_general_mm} is given by:
\begin{equation}
\label{eq_KKT_cond_explicit}
\nabla  \Gamma (\W, \M, {\cal X}) = 0 \; \iff \; 
\begin{cases}
{\cal A}^T({\cal X}) \M +  \M  {\cal A}({\cal X})  + {\cal C}  = 0\\
{\cal A}({\cal X}) \W +  \W  {\cal A}^T({\cal X}) + {\cal B} ({\cal X}) =0 \\
M_{12}^T B {\cal E}^T   +     M_{22} {\cal X} {\cal E} {\cal E}^T +  M_{12}^T W_{12}  {\cal L}^T +    M_{22}  W_{22} {\cal L}^T  = 0.
\end{cases}
\end{equation}
\end{theorem}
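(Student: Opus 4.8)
The plan is to derive all three equations as the stationarity conditions $\nabla \Gamma = 0$ of the Lagrangian \eqref{eq_Lagrange_KKT}, computing the partial gradients with respect to $\W$, $\M$, and ${\cal X}$ separately and then reading off \eqref{eq_KKT_cond_explicit} line by line. Because the feasible set ${\cal D}_L$ is open, the constraint ${\cal X} \in {\cal D}_L$ carries no multiplier, so setting the unconstrained gradient in ${\cal X}$ to zero is legitimate. The first two lines should fall out immediately: differentiating with respect to the multiplier $\W$ touches only the second trace term, and since $\frac{\partial}{\partial \W}\text{Trace}(\W N) = N^T$ with $N = {\cal A}^T({\cal X})\M + \M{\cal A}({\cal X}) + {\cal C}$ symmetric (as $\M$ and ${\cal C}=\C_e^T\C_e$ are symmetric), the condition $\nabla_{\W}\Gamma = 0$ reproduces the observability Lyapunov equation, i.e. the first line of \eqref{eq_KKT_cond_explicit}. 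This is simply primal feasibility.

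For the second line I would differentiate with respect to $\M$. Both trace terms contribute: $\text{Trace}(\M{\cal B}({\cal X}))$ gives ${\cal B}({\cal X})^T = {\cal B}({\cal X})$, while $\text{Trace}(\W{\cal A}^T({\cal X})\M) + \text{Trace}(\W\M{\cal A}({\cal X}))$, after cyclic permutation and using the symmetry of $\W$, gives ${\cal A}({\cal X})\W + \W{\cal A}^T({\cal X})$. Hence $\nabla_{\M}\Gamma = {\cal A}({\cal X})\W + \W{\cal A}^T({\cal X}) + {\cal B}({\cal X}) = 0$, which is exactly the controllability Lyapunov equation for the error realization and the second line of \eqref{eq_KKT_cond_explicit}.

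The genuine work is the third line, $\nabla_{{\cal X}}\Gamma = 0$. I would exploit that ${\cal X}$ enters $\Gamma$ only through the bottom-right block ${\cal X}{\cal L}$ of ${\cal A}({\cal X})$ and through ${\cal X}{\cal E}$ inside ${\cal B}({\cal X}) = \B_e\B_e^T$. For the objective term, expanding $\text{Trace}(\M{\cal B}({\cal X})) = \B_e^T\M\B_e$ via the partition of $\M$ isolates two ${\cal X}$-linear cross terms and one ${\cal X}$-quadratic term; differentiating with the identities $\frac{\partial}{\partial {\cal X}}\text{Trace}(N{\cal X}) = N^T$, $\frac{\partial}{\partial {\cal X}}\text{Trace}(N{\cal X}^T) = N$, and $\frac{\partial}{\partial {\cal X}}\text{Trace}({\cal X}^T M_{22}{\cal X}{\cal E}{\cal E}^T) = 2M_{22}{\cal X}{\cal E}{\cal E}^T$ produces $2M_{12}^T B{\cal E}^T + 2M_{22}{\cal X}{\cal E}{\cal E}^T$. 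For the constraint term I would first observe that, by cyclicity and the symmetry of $\W$ and $\M$, $\text{Trace}(\W{\cal A}^T({\cal X})\M) = \text{Trace}(\W\M{\cal A}({\cal X}))$, so the two pieces merge into a single factor of two; writing $\W\M$ in block form, only its $(2,2)$-block $W_{12}^T M_{12} + W_{22}M_{22}$ pairs with ${\cal X}{\cal L}$, and differentiating $2\,\text{Trace}((W_{12}^T M_{12} + W_{22}M_{22}){\cal X}{\cal L})$ gives $2(M_{12}^T W_{12} + M_{22}W_{22}){\cal L}^T$. Summing the two contributions and dividing by two yields precisely the third line of \eqref{eq_KKT_cond_explicit}.

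I expect the main obstacle to be purely organizational rather than conceptual: keeping the block partitions of ${\cal A}({\cal X})$, ${\cal B}({\cal X})$, $\W$, and $\M$ consistently aligned while applying the trace-gradient identities, and correctly using $\text{Trace}(MN) = \text{Trace}(NM)$ together with the symmetry of the Gramians to collapse the separate ${\cal A}^T$ and ${\cal A}$ contributions into one. The only point demanding care is verifying that the quadratic term in ${\cal X}$ contributes the factor $2M_{22}{\cal X}{\cal E}{\cal E}^T$ (both $M_{22}$ and ${\cal E}{\cal E}^T$ being symmetric), since an error there would be the most likely source of a wrong coefficient; everything else is routine matrix calculus.
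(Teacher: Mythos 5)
Your proposal is correct and follows essentially the same route as the paper: stationarity of the Lagrangian \eqref{eq_Lagrange_KKT} in $\W$, $\M$, and ${\cal X}$, with the first two lines read off as the Lyapunov equations and the third obtained from a trace/block computation isolating the ${\cal X}$-dependence through ${\cal XL}$ and ${\cal XE}$. The only cosmetic difference is that the paper phrases the third gradient via the directional derivative $\Gamma'\opd{\cal X}=\mathrm{Trace}(\nabla_{\cal X}^T\Gamma\,\opd{\cal X})$ while you use matrix-derivative identities directly; the resulting expressions, including the factor $2$ and the $(2,2)$-block bookkeeping, agree.
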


\begin{proof}
Note that the KKT system has the form:
\[
\nabla  \Gamma (\W, \M, {\cal X}) = 
\begin{bmatrix}
\nabla_\W \Gamma (\W, \M, {\cal X}) \\
\nabla_\M \Gamma (\W, \M, {\cal X})  \\ 
\nabla_{\cal X} \Gamma (\W, \M, {\cal X}) 
\end{bmatrix}
=
\begin{bmatrix}
{\cal A}^T({\cal X}) \M +  \M  {\cal A}({\cal X})  + {\cal C}  \\
{\cal A}({\cal X}) \W +  \W  {\cal A}^T({\cal X}) + {\cal B} ({\cal X}) \\
\nabla_{\cal X} \Gamma (\W, \M, {\cal X})
\end{bmatrix}.
\]
It remains to explicitly compute $ \nabla_{\cal X} \Gamma (\W, \M, {\cal X})$.  However, to write  the gradient expression, we  introduce a gradient of $\Gamma$ w.r.t. ${\cal X}$ using the trace of a matrix: 
\[    \Gamma' (\W, \M, {\cal X}) \opd{\cal X}  =    \text{Trace}( \nabla_{\cal X}^T \Gamma (\W, \M, {\cal X}) \opd{\cal X}), \quad \text{with} \; \opd{\cal X} \in  \mathbb R^{\nu \times (\nu+m)}. \]
Then, we have:
\begin{align*}
&\text{Trace}( \nabla_{\cal X}^T \Gamma (\W, \M, {\cal X}) \opd{\cal X}) \\
& =  \text{Trace}( \M {\cal B}' ({\cal X})  +  \W ( ({\cal A}'({\cal X}))^T \M + \M {\cal A}'({\cal X})))\\
&=  \text{Trace} \left( \M  \begin{bmatrix} 0 & B {\cal E}^T  \opd{\cal X}^T \\  \opd{\cal X} {\cal E} B^T &   \opd{\cal X}  {\cal E} {\cal E}^T {\cal X}^T   +  {\cal X} {\cal E} {\cal E}^T  \opd{\cal X}^T \end{bmatrix} + \W \begin{bmatrix} 0 & 0 \\\ 0 & \opd{\cal X} {\cal L} \end{bmatrix}^T \M + \M \begin{bmatrix} 0 & 0 \\\ 0 & \opd{\cal X} {\cal L} \end{bmatrix}  \W  \right)\\
&= 2 \text{Trace} \left(   {\cal E} B^T M_{12}  \opd{\cal X} +  {\cal E} {\cal E}^T X^T M_{22} \opd{\cal X} + {\cal L} W_{12}^T M_{12} \opd{\cal X}   +  {\cal L} W_{22} M_{22} \opd{\cal X}  \right),
\end{align*}
where in the last equality we used the block structure of $\W$ and $\M$.  Hence, we have:
\[  \nabla_{\cal X} \Gamma (\W, \M, {\cal X}) =  2  \left( M_{12}^T B {\cal E}^T   +     M_{22} {\cal X} {\cal E} {\cal E}^T +  M_{12}^T W_{12}  {\cal L}^T +    M_{22}  W_{22} {\cal L}^T  \right).   \]  
Finally, we get the KKT system from \eqref{eq_KKT_cond_explicit}. 
\end{proof}

\noindent The result of previous theorem  also yields the necessary optimality condition for the optimization problem   \eqref{eq_2norm_gram_err_general_mm} of the general model reduction Problem  \ref{prob_optH2_general}:
\begin{lemma}
If   $\M $ and ${\cal X} \in {\cal D}_L$ , where ${\cal X}=[S \; G]$, solves the optimization problem   \eqref{eq_2norm_gram_err_general_mm} corresponding to the model reduction Problem \ref{prob_optH2_general}, then there exists $\W$ such that the triplet $(\W,\M,{\cal X})$ solves the KKT system \eqref{eq_KKT_cond_explicit}.
\end{lemma}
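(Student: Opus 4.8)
The plan is to recognize the claim as the standard first-order (KKT) necessary optimality condition for the equality-constrained problem \eqref{eq_2norm_gram_err_general_mm}, and to derive it from the Lagrange multiplier theorem applied to the Lagrangian $\Gamma$ in \eqref{eq_Lagrange_KKT}. The decision variables are the pair $(\M,{\cal X})$, subject to the open-set constraint ${\cal X}\in{\cal D}_L$ and the matrix equality constraint $\Phi(\M,{\cal X}) := {\cal A}^T({\cal X})\M + \M {\cal A}({\cal X}) + {\cal C} = 0$, viewed as a smooth map into the space of symmetric matrices. First I would observe that, because ${\cal D}_L$ is open, any (local) minimizer ${\cal X}$ lies in its interior; hence the constraint ${\cal X}\in{\cal D}_L$ is inactive and contributes no multiplier, so it suffices to attach a single multiplier $\W$ to the Lyapunov equality constraint.

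The key step is to verify a constraint qualification, namely that the Jacobian of $\Phi$ at the optimizer is surjective onto the symmetric matrices. For this I would compute the partial derivative of $\Phi$ with respect to $\M$ in a direction $\delta\M$, which is the Lyapunov operator $\delta\M \mapsto {\cal A}^T({\cal X})\,\delta\M + \delta\M\,{\cal A}({\cal X})$. Since ${\cal X}\in{\cal D}_L$, the matrix ${\cal A}({\cal X}) = \diag(A,\, {\cal XL})$ is block diagonal with $A$ stable (standing assumption on \eqref{system}) and ${\cal XL}=S-GL$ stable (the defining property of ${\cal D}_L$), hence ${\cal A}({\cal X})$ is itself stable. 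By Lemma \ref{lema:lyap_sol} the associated Lyapunov operator is a bijection on symmetric matrices, so this partial Jacobian alone is already surjective; a fortiori the full Jacobian $D\Phi(\M,{\cal X})$ is surjective. This is exactly the regularity condition (constraint qualification) needed for the Lagrange multiplier theorem, and I expect this to be the only genuine obstacle — everything else is bookkeeping.

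Granting the constraint qualification, the Lagrange multiplier theorem guarantees the existence of a symmetric multiplier $\W$ such that the stationarity condition $\nabla\Gamma(\W,\M,{\cal X})=0$ holds. Finally I would invoke the preceding theorem, which computes this gradient blockwise and shows that $\nabla\Gamma(\W,\M,{\cal X})=0$ is equivalent to the three equations of the KKT system \eqref{eq_KKT_cond_explicit}; this completes the proof. As a consistency check one notes that the stationarity equation in $\M$, namely $\nabla_\M\Gamma = {\cal A}({\cal X})\W + \W{\cal A}^T({\cal X}) + {\cal B}({\cal X}) = 0$, is itself a Lyapunov equation with stable ${\cal A}({\cal X})$, so by Lemma \ref{lema:lyap_sol} the multiplier $\W$ exists, is unique, symmetric and positive semidefinite — indeed it is precisely the controllability Gramian of the error system \eqref{eq_err_realization}.
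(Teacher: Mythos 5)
Your proposal is correct and follows essentially the same route as the paper: the paper also obtains this lemma by attaching a multiplier $\W$ to the Lyapunov equality constraint via the Lagrangian \eqref{eq_Lagrange_KKT} and then invoking the preceding theorem for the explicit form of $\nabla\Gamma=0$, appealing to ``standard optimization arguments'' for the existence of the multiplier. Your explicit verification of the constraint qualification --- surjectivity of the Lyapunov operator $\delta\M \mapsto {\cal A}^T({\cal X})\delta\M + \delta\M\,{\cal A}({\cal X})$ for stable ${\cal A}({\cal X})$, via Lemma \ref{lema:lyap_sol} --- is a point the paper leaves implicit (it appears only inside the gradient computation as the surjectivity of $\Phi_{\M}$), so your write-up is a slightly more complete version of the same argument.
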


%%%%%%%%%%%%%%%%%%%%%%%%%%%%%%%%%%%%%%%%%%%%%%

\subsubsection{ Partial minimization approach}
\label{sec:partimin}
Consider the non-convex optimization problem \eqref{eq_2norm_gram_err_general_m}, where  $L$ is fixed a priori,  $\Pi=VT$,  and  $\A_e = \A_e(S,G)$.  Then, the following partial minimization holds for \eqref{eq_2norm_gram_err_general_m}: 
\[ \eqref{eq_2norm_gram_err_general_m}  = \min_{(S,G): \sigma(S-GL) \subset \mathbb C^-}  \left(  \min_{\M: \A_e^T \M + \M \A_e + \C_e^T \C_e =0} \text{Trace}(\B_e^T \M \B_e)) \right).  \] 
However, if $S-GL$ and  $A$ are stable,  it follows from Lemma \ref{lema:lyap_sol}  that there  exists unique  $\M =\M(S,G) \succeq 0$  solution of the Lyapunov equation:
\[  \A_e^T \M + \M \A _e+ \C_e^T \C_e =0.  \]
Hence, for any pair $(S,G)$ stable, the partial minimization in $\M$ leads to an optimal value $f(S,G) = \min_{\M: \A_e^T \M + \M \A_e + \C_e^T \C_e =0} \text{Trace}(\B_e^T \M \B_e))$ which can be written explicitly as:
\[  f(S,G) = \text{Trace} \left( \begin{bmatrix} B \\ G \end{bmatrix}^T  \M(S,G) \begin{bmatrix} B \\ G \end{bmatrix} \right),  \] 
where $\M(S,G)$ is the unique solution of the Lyapunov equation:
\begin{align}
\label{lyap_eq_11}
\begin{bmatrix} A & 0 \\ 0 & S-GL \end{bmatrix}^T \M  + \M  \begin{bmatrix} A & 0 \\ 0 & S-GL \end{bmatrix} + \begin{bmatrix} C^TC & -C^T C_V \\ -C_V^T C & C_V^T C_V  \end{bmatrix} =0,
\end{align}
with $C_V = C \Pi = C V T$. Therefore,  we get the following equivalent reformulation for \eqref{eq_2norm_gram_err_general_m}:
\begin{align}
\label{eq_2norm_gram_err_general_m1}
& \min_{(S,G)} \text{Trace} \left( \begin{bmatrix} B \\ G \end{bmatrix}^T  \M(S,G) \begin{bmatrix} B \\ G \end{bmatrix} \right) \\
& \text{s.t.}: \;\; \sigma(S-GL) \subset \mathbb C^- \quad \text{and} \quad \eqref{lyap_eq_11}. \nonumber
\end{align}
Using the notation in \eqref{eq:notation}, the non-convex problem \eqref{eq_2norm_gram_err_general_m1} becomes:
\begin{align}
\label{eq_2norm_gram_err_general_m2}
& \min_{\cal X} \text{Trace} \left( \begin{bmatrix} B \\ {\cal X}  {\cal E} \end{bmatrix}^T  \M({\cal X}) \begin{bmatrix} B \\ {\cal X}  {\cal E} \end{bmatrix} \right) \\
& \text{s.t.}: \;\; \sigma({\cal X L}) \subset \mathbb C^- \quad \text{and} \quad \eqref{lyap_eq_12}, \nonumber
\end{align}
where $\M({\cal X})$ is the unique positive semidefinte solution of  the Lyapunov equation:
\begin{align}
\label{lyap_eq_12}
\begin{bmatrix} A & 0 \\ 0 & {\cal XL} \end{bmatrix}^T \M({\cal X})  + \M({\cal X})  \begin{bmatrix} A & 0 \\ 0 & {\cal XL} \end{bmatrix} + \begin{bmatrix} C^TC & -C^T C_V \\ -C_V^T C & C_V^T C_V  \end{bmatrix} =0.
\end{align}
For solving the equivalent  non-convex problem \eqref{eq_2norm_gram_err_general_m2} we can  apply any first- or second-order optimization method. For this type of optimization scheme we need to compute the gradient and even the Hessian of the objective function. In the sequel, we show that we can compute the gradient of the objective function of  \eqref{eq_2norm_gram_err_general_m2} solving two Lyapunov equations.  Indeed, by $\text{Trace}(MN) = \text{Trace}(NM)$ for any matrices $M, N$ of compatible sizes, the non-convex objective function of \eqref{eq_2norm_gram_err_general_m2} becomes in terms of  the notation \eqref{eq:notation}:
\[  f({\cal X}) =  \text{Trace} \left( \begin{bmatrix} B \\ {\cal X} {\cal E} \end{bmatrix}^T  \M({\cal X}) \begin{bmatrix} B \\ {\cal X} {\cal E} \end{bmatrix} \right)  = \text{Trace} \left(  \M({\cal X})   {\cal B}({\cal X})  \right).  \]

\begin{theorem}
The  objective function  $f$ of \eqref{eq_2norm_gram_err_general_m2}  is differentiable on the set of stable matrices ${\cal D}_L$  and the gradient of $f$ at ${\cal X} \in {\cal D}_L$ is given by: 
\begin{align}
\label{eq_gradient1}
\nabla f({\cal X}) =  2  \left[ M_{12}^T({\cal X})  W_{12}({\cal X})  {\cal L}^T +    M_{22} ({\cal X}) W_{22}({\cal X}) {\cal L}^T +   M_{12}^T({\cal X}) B {\cal E}^T   +     M_{22}({\cal X}) {\cal X} {\cal E} {\cal E}^T \right], 
\end{align}
where $\M({\cal X}) $ solves the Lyapunov equation \eqref{lyap_eq_12} and   $\W({\cal X}) $ solves the Lyapunov equation:
\begin{equation}
\label{WX_lyap}
\begin{bmatrix} A & 0 \\ 0 & {\cal XL}\end{bmatrix} \W({\cal X}) + \W({\cal X})\begin{bmatrix} A & 0 \\ 0 & {\cal XL}\end{bmatrix}^T+   {\cal B}({\cal X}) =0.
\end{equation}
\end{theorem}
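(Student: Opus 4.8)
The objective is $f({\cal X}) = \text{Trace}(\M({\cal X})\,{\cal B}({\cal X}))$, where $\M({\cal X})$ is only \emph{implicitly} available, namely as the solution of the Lyapunov equation \eqref{lyap_eq_12}, i.e. ${\cal A}^T({\cal X})\M + \M{\cal A}({\cal X}) + {\cal C} = 0$, and where ${\cal C} = \C_e^T\C_e$ is a fixed matrix since $L$ and $\Pi=VT$ are held fixed in this formulation. The plan is first to establish differentiability and then to compute the gradient by differentiating \emph{through} this constraint, using the adjoint Lyapunov equation \eqref{WX_lyap} in order to avoid ever forming the derivative of $\M$ explicitly.

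For differentiability I would note that ${\cal X} \mapsto {\cal A}({\cal X})$ is affine, since ${\cal X}{\cal L}$ is linear in ${\cal X}$, and that on the open set ${\cal D}_L$ the matrix ${\cal A}({\cal X})$ is stable. Hence the Lyapunov operator $X \mapsto {\cal A}^T({\cal X})X + X{\cal A}({\cal X})$ is a linear bijection on symmetric matrices, because $\sigma({\cal A}({\cal X})) \subset \mathbb C^-$ forces every sum $\lambda_i + \lambda_j$ of its eigenvalues to be nonzero. The map $({\cal X},\M) \mapsto {\cal A}^T({\cal X})\M + \M{\cal A}({\cal X}) + {\cal C}$ is polynomial, and its partial derivative in $\M$ is precisely this invertible operator; the implicit function theorem then yields that ${\cal X} \mapsto \M({\cal X})$ is smooth on ${\cal D}_L$, so $f$ is differentiable there.

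To compute the gradient I differentiate $f$ along an arbitrary direction $\opd{\cal X}$, obtaining $\opd f = \text{Trace}((\opd\M)\,{\cal B}({\cal X})) + \text{Trace}(\M\,\opd{\cal B}({\cal X}))$. The second term is explicit in $\opd{\cal X}$ through ${\cal B}({\cal X}) = \B_e\B_e^T$; the first term is the obstacle, since $\opd\M$ is defined only implicitly. Differentiating the constraint \eqref{lyap_eq_12}, with ${\cal C}$ constant, gives ${\cal A}^T(\opd\M) + (\opd\M){\cal A} = -\bigl((\opd{\cal A})^T\M + \M\,\opd{\cal A}\bigr)$. I then multiply by $\W({\cal X})$, the solution of the adjoint equation \eqref{WX_lyap} (that is, ${\cal A}\W + \W{\cal A}^T + {\cal B} = 0$), and take the trace. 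Using only the cyclic property of the trace, the left-hand side collapses to $\text{Trace}\bigl((\opd\M)({\cal A}\W + \W{\cal A}^T)\bigr) = -\text{Trace}((\opd\M){\cal B})$, so that $\text{Trace}((\opd\M){\cal B}) = \text{Trace}\bigl(\W((\opd{\cal A})^T\M + \M\,\opd{\cal A})\bigr)$. This identity is what makes $\W$ appear and is the crux of the argument; everything else is bookkeeping.

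Substituting back yields $\opd f = \text{Trace}(\M\,\opd{\cal B}) + \text{Trace}\bigl(\W((\opd{\cal A})^T\M + \M\,\opd{\cal A})\bigr)$, which is \emph{identical} to the expression $\text{Trace}(\nabla_{\cal X}^T\Gamma\,\opd{\cal X})$ already evaluated in the proof of the KKT theorem, with $\W$ now playing the role of the multiplier. Hence I can reuse that block computation verbatim: inserting the partitions of $\M$ and $\W$ together with the structure of $\opd{\cal A}$ and $\opd{\cal B}$ and reading off the coefficient of $\opd{\cal X}$ gives
\[ \nabla f({\cal X}) = 2\left( M_{12}^T W_{12}{\cal L}^T + M_{22}W_{22}{\cal L}^T + M_{12}^T B{\cal E}^T + M_{22}{\cal X}{\cal E}{\cal E}^T \right), \]
as claimed. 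The only genuinely delicate step is the adjoint elimination of $\opd\M$; the differentiability argument and the closing block algebra are routine once that identity is in hand.
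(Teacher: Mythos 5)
Your proposal is correct and follows essentially the same route as the paper: differentiability of $\M({\cal X})$ via the implicit function theorem applied to the Lyapunov constraint, elimination of $\opd\M$ by pairing the differentiated constraint with the adjoint Gramian equation \eqref{WX_lyap} under the trace, and then the same block bookkeeping already carried out for $\nabla_{\cal X}\Gamma$ in the KKT theorem. The paper phrases the adjoint step as subtracting the two multiplied Lyapunov identities before taking the trace, but this is the identical computation to your cyclic-trace argument, and your sign conventions check out.
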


\begin{proof}
To compute the gradient $\nabla f({\cal X})$, we write the derivative  $f'({\cal X}) \opd{\cal X}$ for some $\opd{\cal X} \in  \mathbb R^{(\nu+m) \times \nu}$ in gradient form using the trace. We introduce the gradient as:
\[ f'({\cal X})  \opd{\cal X} = \text{Trace} \left( \nabla f({\cal X})^T \opd{\cal X}  \right). \]
Then, we have:
\begin{equation*}
f'({\cal X})  \opd{\cal X}  =  \text{Trace} \left(     \M'({\cal X}) {\cal B}({\cal X})   + \M({\cal X}) {\cal B}'({\cal X})   \right).
\end{equation*}
We  compute separately the two terms in the above expression.  Let  $$\Phi({\cal X},\M)=\begin{bmatrix} A & 0 \\ 0 & {\cal XL} \end{bmatrix}^T \M  + \M  \begin{bmatrix} A & 0 \\ 0 & {\cal XL} \end{bmatrix}.$$ 
Since ${\cal X} \in {\cal D}_L$ and ${\cal D}_L$ is an open set, then by Lemma \ref{lema:lyap_sol}  we have that $ \Phi_{\M}({\cal X},\M) \opd \M$ given by:
\begin{align*}
 \Phi_{\M}({\cal X},\M) \opd \M &=\begin{bmatrix} A & 0 \\ 0 & {\cal XL} \end{bmatrix}^T \opd\M  + \opd\M  \begin{bmatrix} A & 0 \\ 0 & {\cal XL} \end{bmatrix}
\end{align*}
is surjective and also we have:
\begin{align*} 
\Phi_{ \cal X} ({\cal X},\M)  \opd{\cal X} &= \begin{bmatrix} 0 & 0 \\ 0 & \opd{\cal X} {\cal L} \end{bmatrix}^T \M + \M \begin{bmatrix} 0 & 0 \\\ 0 & \opd{\cal X} {\cal L} \end{bmatrix}.
\end{align*}
Since $\Phi({\cal X},\M) +{\cal C}=0$,   the Implicit Function Theorem yields the differentiability of $\M({\cal X})$ and the following relation:
\begin{align}
\label{eq_TFI1}
\begin{bmatrix} A & 0 \\ 0 & {\cal XL}\end{bmatrix}^T \M'({\cal X}) + \M'({\cal X}) \begin{bmatrix} A & 0 \\ 0 & {\cal XL}\end{bmatrix} & + \begin{bmatrix} 0 & 0 \\ 0 & \opd{\cal X} {\cal L} \end{bmatrix}^T \M({\cal X})   + \M({\cal X})\begin{bmatrix} 0 & 0 \\\ 0 & \opd{\cal X} {\cal L} \end{bmatrix}=0.
\end{align}

\noindent Moreover, by \eqref{W_Lyap} the  Gramian $\W({\cal X})$ is the unique solution of the Lyapunov equation \eqref{WX_lyap}. Subtracting \eqref{eq_TFI1} multiplied by $\W({\cal X})$ to the left from \eqref{WX_lyap} multiplied by $\M'({\cal X}) $ to the right,  taking the trace,  and reducing the appropriate terms, we get:
\begin{align}  
\label{eq:df1}
\text{Trace} \left( \M'({\cal X})   {\cal B}({\cal X})  \right) & =  \text{Trace} \left( \W({\cal X})\begin{bmatrix} 0 & 0 \\ 0 &  \opd{\cal X} {\cal L} \end{bmatrix}^T \M({\cal X}) + \M({\cal X}) \begin{bmatrix} 0 & 0 \\ 0 &  \opd{\cal X} {\cal L} \end{bmatrix} \W({\cal X})\right) \nonumber\\
& = 2 \text{Trace} \left(  {\cal L}  W_{12}^T ({\cal X})  M_{12}({\cal X}) \opd{\cal X} +  {\cal L}  W_{22} ({\cal X}) M_{22}({\cal X}) \opd{\cal X} \right), 
\end{align}
where for the second equality we used the block structure of $\W$ and $\M$ and the definition of trace.  Similarly, for the second term using the block structure of  $\M$ and the definition of trace, we get: 
\begin{align}  
\label{eq:df2}
\text{Trace} \left(  M({\cal X}) {\cal B}'({\cal X})   \right) & =  \text{Trace} \left( \M({\cal X})  \begin{bmatrix} 0 & B  {\cal E}^T  \opd{\cal X}^T   \\    \opd{\cal X} {\cal E} B^T &  \opd{\cal X}  {\cal E}  {\cal E}^T {\cal X}^T + {\cal X}  {\cal E}  {\cal E}^T  \opd{\cal X}^T\end{bmatrix}   \right) \nonumber \\
& = 2 \text{Trace} \left(  {\cal E} B^T  M_{12}({\cal X})  \opd{\cal X} +  {\cal E} {\cal E}^T X^T  M_{22}({\cal X}) \opd{\cal X} \right). 
\end{align}
Hence, from \eqref{eq:df1} and \eqref{eq:df2} we get the closed form expression for the gradient \eqref{eq_gradient1}.
\end{proof}

\noindent Note that the expression of the gradient   $\nabla f$ from \eqref{eq_gradient1} is the same as the partial gradient of the Lagrangian $ \nabla_{\cal X} \Gamma $ from \eqref{eq_KKT_cond_explicit}.  The result of previous theorem  also yields the necessary optimality condition for the model reduction Problem  \ref{prob_optH2_general}  expressed in terms of the optimization problem \eqref{eq_2norm_gram_err_general_m2}:
\begin{lemma}
If   ${\cal X} \in {\cal D}_L$ , where ${\cal X}=[S \; G]$, solves the optimization problem   \eqref{eq_2norm_gram_err_general_m2} corresponding to the model reduction Problem \ref{prob_optH2_general}, then 
\[   M_{12}^T({\cal X})  W_{12}({\cal X})  {\cal L}^T +    M_{22} ({\cal X}) W_{22}({\cal X}) {\cal L}^T +   M_{12}^T({\cal X}) B {\cal E}^T   +     M_{22}({\cal X}) {\cal X} {\cal E} {\cal E}^T =0,  \] 
where $\M({\cal X}) $ solves the Lyapunov equation \eqref{lyap_eq_12} and   $\W({\cal X}) $ solves the Lyapunov equation \eqref{WX_lyap}. 
\end{lemma}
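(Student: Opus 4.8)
The plan is to recognize that the final statement is nothing more than Fermat's first-order stationarity condition for the smooth minimization problem \eqref{eq_2norm_gram_err_general_m2}, and that essentially all of the analytic work has already been carried out in the preceding theorem. First I would observe that the feasible set of \eqref{eq_2norm_gram_err_general_m2} is exactly the open set ${\cal D}_L = \{ {\cal X} : \sigma({\cal X}{\cal L}) \subset \mathbb{C}^- \}$, so a (local) solution ${\cal X}$ automatically lies in the interior of the feasible region. There are no active inequality or equality constraints left to account for, since the Lyapunov relation \eqref{lyap_eq_12} has merely been used to \emph{define} $\M({\cal X})$ implicitly and has thereby been eliminated from the problem; the minimization in \eqref{eq_2norm_gram_err_general_m2} is effectively unconstrained on ${\cal D}_L$.

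Next, I would invoke the previous theorem, which guarantees that the objective $f$ is differentiable on ${\cal D}_L$ and supplies the closed-form gradient \eqref{eq_gradient1}. The standard necessary optimality condition for an interior minimum of a differentiable function then gives $\nabla f({\cal X}) = 0$. Substituting the explicit expression \eqref{eq_gradient1} for $\nabla f({\cal X})$ and cancelling the scalar factor $2$ yields exactly
\[ M_{12}^T({\cal X}) W_{12}({\cal X}) {\cal L}^T + M_{22}({\cal X}) W_{22}({\cal X}) {\cal L}^T + M_{12}^T({\cal X}) B {\cal E}^T + M_{22}({\cal X}) {\cal X} {\cal E} {\cal E}^T = 0, \]
where $\M({\cal X})$ and $\W({\cal X})$ are the unique positive semidefinite solutions of the Lyapunov equations \eqref{lyap_eq_12} and \eqref{WX_lyap} guaranteed by Lemma \ref{lema:lyap_sol}. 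This is precisely the claimed identity.

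I do not expect a genuine obstacle here: the entire difficulty of the result resides in the differentiability and the gradient computation, which are already discharged by the preceding theorem via the Implicit Function Theorem applied to \eqref{lyap_eq_12}. The only points that still warrant a sentence of justification are that ${\cal D}_L$ is open, so that no boundary or constraint-qualification issues arise and Fermat's rule applies cleanly, and that $f$ is well defined on all of ${\cal D}_L$; the latter holds because stability of both $A$ and ${\cal X}{\cal L}$ makes $\A_e$ stable and hence, by Lemma \ref{lema:lyap_sol}, the defining solution $\M({\cal X})$ exists and is unique. I would also remark that, because \eqref{eq_2norm_gram_err_general_m2} is non-convex, this condition is necessary but not sufficient for optimality.
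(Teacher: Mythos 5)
Your proposal is correct and matches the paper's (implicit) argument: the lemma is stated as an immediate consequence of the preceding theorem, obtained by applying the interior first-order optimality condition $\nabla f({\cal X})=0$ on the open set ${\cal D}_L$ and substituting the gradient formula \eqref{eq_gradient1}. Your additional remarks on the openness of ${\cal D}_L$, the elimination of the Lyapunov constraint via the implicit definition of $\M({\cal X})$, and the necessity (but not sufficiency) of the condition are all consistent with the paper's treatment.
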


\noindent We can replace the open set ${\cal D}_L$ with any sublevel set:
\[  {\cal N}_L^{{\cal X}_0} = \{ {\cal X} \in  {\cal D}_L: \; f({\cal X}) \leq f({\cal X}_0)  \},  \]
where ${\cal X}_0 \in  {\cal D}_L$ is any initial stable reduced order system matrix.  
 arguments as in \cite{Toi:85} we can show that ${\cal N}_L^{{\cal X}_0}$ is a compact set.   Then,  the theorem of Weierstrass implies that for any  given matrix ${\cal X}_0 \in  {\cal D}_L$ ,  the model reduction Problem  \ref{prob_optH2_general}  given by  optimization formulation \eqref{eq_2norm_gram_err_general_m2} has a global minimum  in the sublevel set ${\cal N}_L^{{\cal X}_0}$. We can also show that the gradient $\nabla f({\cal X})$ is Lipschitz continuous on the compact sublevel  set   ${\cal N}_L^{{\cal X}_0}$.  Let us briefly sketch the proof of this statement. First we observe that $\M({\cal X})$ and $\W({\cal X})$ are continuous functions and moreover there exists finite $\ell_M > 0$ such that:
\[   \|  \M({\cal X}) -   \M({\cal Y})  \| \leq \ell_M \|  {\cal X}  - {\cal Y}\| \quad \forall {\cal X},{\cal Y} \in   {\cal N}_L^{{\cal X}_0}.  \]
Then, using the expression of  $\nabla f({\cal X})$, compactness of  ${\cal N}_L^{{\cal X}_0}$, continuity of $\M({\cal X})$ and $\W({\cal X})$, and the previous relation we conclude that there exists $\ell_f >0$ such that:
\[  \|  \nabla f({\cal X}) -   \nabla f ({\cal Y})  \| \leq \ell_f  \|  {\cal X}  - {\cal Y}\|  \quad \forall {\cal X},{\cal Y} \in   {\cal N}_L^{{\cal X}_0}.  \]
This property of the gradient is useful when analyzing the convergence behavior of the first-order algorithm  we propose for solving \eqref{eq_2norm_gram_err_general_m2}.

%%%%%%%%%%%%%%%%%%%%%%%%%%%%%%%%%%%%%%%%%%%%%%

\subsubsection{SDP approach}
Alternatively, the non-convex problem \eqref{eq_2norm_gram_err_general_m} can be written equivalently in terms of matrix inequalities (semidefinite programming):
\begin{align}
\label{eq_2norm_gram_err_general_m22}
& \min_{(S, G, \M): \; \M \succeq 0} \text{Trace} \left( \begin{bmatrix} B \\ G \end{bmatrix}^T  \M \begin{bmatrix} B \\ G \end{bmatrix} \right) \\
& \text{s.t.}:  \begin{bmatrix} A & 0 \\ 0 & S-GL \end{bmatrix}^T \M  + \M  \begin{bmatrix} A & 0 \\ 0 & S-GL \end{bmatrix} + \begin{bmatrix} C^TC & -C^T (C\Pi) \\ -(C\Pi)^T C & (C\Pi)^T(C\Pi)  \end{bmatrix} \preceq 0, \nonumber
\end{align}
where recall that  $L$ is fixed a priori.  Clearly, SDP problem \eqref{eq_2norm_gram_err_general_m22} is not convex since it contains bilinear matrix inequalities (BMIs). However, next theorem proves that we can obtain a suboptimal solution through convex relaxation:   

\begin{theorem}
If the following convex SDP relaxation:
\begin{align}
\label{eq_2norm_gram_err_general_m5}
& \min_{(X_{22},Y_{22},Z_{22},\Theta_{22}), M_{11} \succeq 0, M_{22} \succeq 0} \text{Trace} \left( B^T M_{11}B + X_{22} \right) \\
& \text{s.t.}:  \Theta_{22}^T - L^T Z_{22}^T + \Theta_{22} - Z_{22}L +  C_V^TC_V \preceq Y_{22} \nonumber \\
& \qquad  \begin{bmatrix}  X_{22} & Z_{22} \\ Z_{22}^T & M_{22}\end{bmatrix} \succeq 0,\;  \begin{bmatrix} A^TM_{11} + M_{11}A^T +C^TC &  -C^TC_V \\
- C_V^T C & Y_{22} \end{bmatrix}  \preceq 0 \nonumber
\end{align}
has a solution, then we can recover a suboptimal solution of the model reduction Problem  \ref{prob_optH2_general}  expressed in terms of the SDP  problem \eqref{eq_2norm_gram_err_general_m22}  through the relations: 
\[ G= M_{22}^{-1} Z_{22}, \quad  S= M_{22}^{-1} \Theta_{22} \quad \text{and} \quad \M=\diag(M_{11}, M_{22}).   \] 
\end{theorem}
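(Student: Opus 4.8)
The plan is to exhibit, from any solution of the convex program \eqref{eq_2norm_gram_err_general_m5}, a feasible point of the non-convex problem \eqref{eq_2norm_gram_err_general_m22} whose cost does not exceed the optimal value of \eqref{eq_2norm_gram_err_general_m5}; this is exactly what ``recovering a suboptimal solution'' means. The whole construction rests on reading the auxiliary variables as linearizations of the bilinear terms: I interpret $\Theta_{22} = M_{22} S$ and $Z_{22} = M_{22} G$, so that the recovery formulas $S = M_{22}^{-1}\Theta_{22}$ and $G = M_{22}^{-1} Z_{22}$ are the inverse substitution, while $\M = \diag(M_{11}, M_{22})$ restricts the free Gramian in \eqref{eq_2norm_gram_err_general_m22} to be block-diagonal. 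Throughout I assume $M_{22} \succ 0$, which the recovery formulas implicitly require.

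First I would check the Lyapunov inequality of \eqref{eq_2norm_gram_err_general_m22}. With block-diagonal $\M$ and $C\Pi = C_V$, the matrix $\A_e^T \M + \M \A_e + \C_e^T \C_e$ has $(1,1)$ block $A^T M_{11} + M_{11} A + C^T C$, off-diagonal block $-C^T C_V$, and $(2,2)$ block $(S-GL)^T M_{22} + M_{22}(S-GL) + C_V^T C_V$. The key identity is that, since $M_{22}(S-GL) = \Theta_{22} - Z_{22} L$, this $(2,2)$ block equals $\Theta_{22}^T - L^T Z_{22}^T + \Theta_{22} - Z_{22} L + C_V^T C_V$. By the first constraint of \eqref{eq_2norm_gram_err_general_m5} this quantity is $\preceq Y_{22}$, while the third constraint of \eqref{eq_2norm_gram_err_general_m5} states precisely that the block matrix with the same $(1,1)$ and off-diagonal entries but $Y_{22}$ in the lower-right corner is $\preceq 0$. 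Replacing $Y_{22}$ by the smaller (in the Loewner order) true $(2,2)$ block only adds a negative semidefinite term supported in the lower-right corner, and a sum of two $\preceq 0$ matrices is $\preceq 0$; hence the Lyapunov inequality holds, and $\M = \diag(M_{11}, M_{22}) \succeq 0$ follows from $M_{11}, M_{22} \succeq 0$.

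Next I would bound the objective. The second constraint $\bigl[\begin{smallmatrix} X_{22} & Z_{22} \\ Z_{22}^T & M_{22}\end{smallmatrix}\bigr] \succeq 0$ is a Schur-complement certificate: since $M_{22} \succ 0$, it is equivalent to $X_{22} \succeq Z_{22}^T M_{22}^{-1} Z_{22}$ (up to the transpose convention, which is immaterial under the trace). Therefore $\text{Trace}(X_{22}) \geq \text{Trace}(Z_{22}^T M_{22}^{-1} Z_{22}) = \text{Trace}(G^T M_{22} G)$, the last equality by $G = M_{22}^{-1} Z_{22}$. Adding $\text{Trace}(B^T M_{11} B)$ to both sides and using block-diagonality of $\M$ gives $\text{Trace}(\B_e^T \M \B_e) = \text{Trace}(B^T M_{11} B + G^T M_{22} G) \leq \text{Trace}(B^T M_{11} B + X_{22})$, i.e. the cost of the recovered triple in \eqref{eq_2norm_gram_err_general_m22} is at most the optimal value of \eqref{eq_2norm_gram_err_general_m5}.

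The main obstacle is the positivity of $M_{22}$: both the recovery formulas and the Schur-complement step need $M_{22} \succ 0$, whereas \eqref{eq_2norm_gram_err_general_m5} only imposes $M_{22} \succeq 0$, so I would either restrict to solutions with $M_{22} \succ 0$ or regularize $M_{22}$ by $\varepsilon I$ and let $\varepsilon \to 0$. A second delicate point is the stability requirement $\sigma(S-GL) \subset \mathbb{C}^-$ demanded by Problem \ref{prob_optH2_general}: the $(2,2)$ block inequality only yields $(S-GL)^T M_{22} + M_{22}(S-GL) \preceq -C_V^T C_V \preceq 0$, which with $M_{22} \succ 0$ confines the spectrum of $S-GL$ to the closed left half-plane; upgrading this to strict stability needs a detectability argument for the pair $(S-GL, C_V)$, or an a posteriori check as the paper proposes for its other spectral constraints.
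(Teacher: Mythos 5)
Your proof is correct and follows essentially the same route as the paper's: the same block-diagonal restriction $M_{12}=0$, the same linearizing substitutions $Z_{22}=M_{22}G$, $\Theta_{22}=M_{22}S$, and the same Schur-complement step, only run in the verification direction (from a solution of \eqref{eq_2norm_gram_err_general_m5} back to a feasible point of \eqref{eq_2norm_gram_err_general_m22}) rather than as a forward derivation. The two caveats you raise --- the need for $M_{22}\succ 0$ in the recovery formulas and the gap between $\sigma(S-GL)$ lying in the closed versus the open left half-plane --- are real, and the paper's own proof passes over both in silence, so flagging them is an improvement rather than an error.
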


\begin{proof}
Using the block form of $\M$ and the notation $C_V=C\Pi = C V T$, \eqref{eq_2norm_gram_err_general_m22} yields the equivalent SDP problem:
\begin{align}
\label{eq_2norm_gram_err_general_m3}
& \min_{(S,G), \; \M \succeq 0} \text{Trace} \left( B^T M_{11}B + B^TM_{12}G + G^TM_{12}^T B + G^TM_{22}G \right) \\
& \text{s.t.}:
\begin{bmatrix} A^TM_{11} + M_{11}A^T +C^TC & A^T M_{12} +M_{12} (S-GL) -C^TC_V \\
M_{12}^TA + (S-GL)^TM_{12}^T  - C_V^T C & (S-GL)^T M_{22} + M_{22}(S-GL) +  C_V^TC_V \end{bmatrix}  \preceq 0. \nonumber
\end{align}

\noindent Note that problem \eqref{eq_2norm_gram_err_general_m3} is not convex since it contains bilinear matrix terms  and we are not aware of any change of variables that might lead to a convex reformulation (note that if we assume $M_{12} \not =0$, then we cannot convexify the previous BMIs since we need to define $M_{12} G= Z_{12}$ and $M_{22} G= Z_{22}$ and require $\M \succeq 0$).  However, if we assume the block $M_{12} =0$, then problem \eqref{eq_2norm_gram_err_general_m3} can be recast as a convex SDP. More precisely, if we introduce additional variables, then we have:
\begin{align}
\label{eq_2norm_gram_err_general_m4}
& \min_{(S,G,X_{22},Y_{22}), M_{11} \succeq 0, M_{22} \succeq 0} \text{Trace} \left( B^T M_{11}B + X_{22} \right) \\
& \text{s.t.}: X_{22} \succeq G^T M_{22} G, \;\; (S-GL)^T M_{22} + M_{22}(S-GL) +  C_V^TC_V \preceq Y_{22} \nonumber \\
&\qquad  \begin{bmatrix} A^TM_{11} + M_{11}A^T +C^TC &  -C^TC_V \\
- C_V^T C & Y_{22} \end{bmatrix}  \preceq 0.  \nonumber
\end{align}
Denoting $Z_{22} = M_{22}G, \Theta_{22} = M_{22}S$ and using the Schur complement,  problem \eqref{eq_2norm_gram_err_general_m4} becomes the convex SDP \eqref{eq_2norm_gram_err_general_m5}. Moreover, we can recover a suboptimal solution of the original problem through the relations:  $G= M_{22}^{-1} Z_{22}, S= M_{22}^{-1} \Theta_{22}$ and  $ \M=\text{diag}(M_{11}, M_{22})$.  Clearly, this is a suboptimal solution of the original SDP problem \eqref{eq_2norm_gram_err_general_m22}  since we restrict the matrix $\M$ to have the   block $M_{12} =0$. Hence, \eqref{eq_2norm_gram_err_general_m5} is  a convex SDP relaxation of the original problem \eqref{eq_2norm_gram_err_general_m22}.  
\end{proof}

%%%%%%%%%%%%%%%%%%%%%%%%%%%%%%%%%%%%%%%

\subsection{Numerical optimization algorithms for Problem 1}
\label{sec:numoptalg1}
In this section we present several optimization algorithms for solving the model reduction  Problem 1. For solving the associated KKT system \eqref{eq_KKT_cond_explicit} of  the non-convex problem \eqref{eq_2norm_gram_err_general_m} or the   non-convex (partial) optimization problem \eqref{eq_2norm_gram_err_general_m2} we propose first-order methods since they are adequate for  large-scale optimization problems, i.e. the dimension $n$ is very large.  Of course, we can also apply second-order methods to solve these optimization problems, but they require more expensive computations at each iteration (e.g., evaluation of Hessians and finding solutions of linear system), making them intractable when dimension $n$ of the original linear system \eqref{system} is  large.  

\subsubsection{Gradient type method for  KKT system}
One optimization algorithm that can be used for solving the KKT system \eqref{eq_KKT_cond_explicit} is the gradient method. Starting from an initial triplet $(\W_0,\M_0,{\cal X}_0)$ update:
\begin{align}
\label{eq_KKT_update}
&\W_{k+1}  =   \W_{k}  +  \alpha_k \nabla_{\W}  \Gamma (\W_k, \M_k, {\cal X}_k)\\
& \begin{bmatrix} \M_{k+1}  \\ {\cal X}_{k+1}  \end{bmatrix} = \begin{bmatrix} \M_{k} \\ {\cal X}_{k}    \end{bmatrix} - \alpha_k \nabla_{(\M, {\cal X})}  \Gamma (\W_k, \M_k, {\cal X}_k), \nonumber 
\end{align}
where $\alpha_k$ is a stepsize selected to minimize an appropriate merit function in the search direction at each step. Under some mild assumptions it is possible to show that the iterative process  \eqref{eq_KKT_update} converges locally to a KKT point, see, e.g., \cite{Lue:03}(Chapter 14). Moreover, if we start sufficiently close to a KKT point we can even choose $\alpha_k$ constant and the sequence will converge linearly to a KKT  point, with a speed of convergence depending on the starting point.   

\noindent If the  convex SDP relaxation \eqref{eq_2norm_gram_err_general_m5}  admits a solution, then we can consider as a starting point the suboptimal solution provided by this relaxation, i.e., ${\cal X}_0= [S_0 \ G_0] $ with $G_0= M_{22}^{-1} Z_{22}, S_0= M_{22}^{-1} \Theta_{22}$ and  $ \M_0=\text{diag}(M_{11}, M_{22})$. Moreover, we can take $\W_0$ as the solution   of the Lyapunov equation \eqref{W_Lyap} with  $S=S_0$ and $G=G_0$ given before.  Otherwise, we can fix $S_0$  and $L$ such that the pair $(L,S_0)$ is observable, and select a set $\{\lambda_1,\dots,\lambda_\nu\}\subset \mathbb{C}^-$. Then, from control theory it is known that there exists (stabilizing) $G_0$, computed by standard control algorithms, such that the spectrum $\sigma(S_0-G_0L)=\{\lambda_1,\dots, \lambda_\nu\}$. 

\noindent Based on the  explicit form of the KKT system   \eqref{eq_KKT_cond_explicit} we get the following simple iterative process:  
\begin{equation}
\label{eq_KKT_update1}
\begin{cases}
\W_{k+1} = \W_k+ \alpha_k ({\cal A}^T({\cal X}_k) \M_k + \M_k {\cal A}({\cal X}_k) + {\cal C}) \\
\M_{k+1} =  \M_k - \alpha_k ({\cal A}({\cal X}_k) \W_k + \W_k {\cal A}^T({\cal X}_k) + {\cal B}({\cal X}_k))\\
{\cal X}_{k+1}  =  {\cal X}_k - \alpha_k(M_{12,k}^T  B {\cal E}^T   +     M_{22,k} {\cal X}_k {\cal E} {\cal E}^T +  M_{12,k}^T W_{12,k}  {\cal L}^T +    M_{22,k}  W_{22,k} {\cal L}^T  ).
\end{cases}
\end{equation}
This algorithm has a cheap iteration since it  requires only matrix multiplications.  The update in \eqref{eq_KKT_update1} has the disadvantage however  that only the asymptotic ${\cal X}_k= [S_k \ G_k] $  leads to a reduced order stable  system while the intermediate iterates can lead to  unstable systems.

%%%%%%%%%%%%%%%%%%%%%%%%%%%%%%%%%%%

\subsubsection{Gradient method for partial minimization problem}
We have proved that the non-convex optimization problem  \eqref{eq_2norm_gram_err_general_m2} has differentiable objective function  and its gradient is given in \eqref{eq_gradient1}. Moreover, the gradient is Lipschitz continuous on any compact set. Then, we can apply gradient method for solving \eqref{eq_2norm_gram_err_general_m2}. Starting from  the initial stable matrix ${\cal X}_0 \in {\cal D}_L$ we consider the following update:
\[  {\cal X}_{k+1}  =  {\cal X}_k - \alpha_k \nabla f({\cal X}_k ), \]
where the stepsize $\alpha_k$ can be chosen by a backtracking procedure or constant in the interval  $(0, 2/\ell_f)$ (where $\ell_f$ denotes the Lpschitz constant of the gradient). With these choices for the stepsize and using the Lipschitz gradient property for the objective function the sequence of value functions $f({\cal X}_k)$ is nonincreasing \cite{Nes:04}:
\[  f({\cal X}_{k+1}) \leq f({\cal X}_k) - \Delta \cdot \| \nabla f({\cal X}_k ) \|^2 \quad \forall k \geq 0,  \]
for some constant $\Delta>0$.  Therefore all the iterates remain in the compact sublevel set ${\cal N}_L^{{\cal X}_0}$. Moreover, since $f$ is bounded from below by zero, then for any positive integer  $K$ it is straightforward to prove from the previous descent inequality   the following global convergence rate: 
\[   \min_{i=0:k}  \| \nabla f({\cal X}_k ) \|^2   \leq    \frac{f({\cal X}_0) - f^*}{\Delta \cdot k}  \quad \forall k \geq 0, \]
where $f^*$ is the optimal value of problem \eqref{eq_2norm_gram_err_general_m2}.  Under some mild assumptions, such as the Hessian of $f$ at a local minimum is positive definite and  bounded, then starting sufficiently close to this local optimum the gradient iteration converges linearly to this solution \cite{Nes:04}. Therefore, the speed of convergence of this iterative process depends on the starting point.  For choices of  the starting point we can consider the procedures described  in the previous section.    

\noindent Note that the gradient iteration has the  explicit form:
\begin{equation}
\label{eq_pm_update1}
{\cal X}_{k+1}  =  {\cal X}_k - \alpha_k(M_{12,k}^T  B {\cal E}^T   +     M_{22,k} {\cal X}_k {\cal E} {\cal E}^T +  M_{12,k}^T W_{12,k}  {\cal L}^T +    M_{22,k}  W_{22,k} {\cal L}^T ),
\end{equation}
where $\M_k$ and $\W_k$ are  the unique positive semidefinite  solutions of the Lyapunov equations in ${\cal X}_k$ \eqref{lyap_eq_12} and  \eqref{WX_lyap}, respectively. Therefore,  this iterative process has expensive iterations since it requires solving two Lyapunov equations, which can be  prohibitive when dimension $n$ of the original system is  large. On the other hand the update in \eqref{eq_pm_update1} has the advantage that any  iterate ${\cal X}_k= [S_k \ G_k]$ leads to a stable  reduced order model, while for the iteration  \eqref{eq_KKT_update1}  only the asymptotic ${\cal X}_k$  leads to a  stable  system.

%%%%%%%%%%%%%%%%%%%%%%%%%%%%%%%%%%%%%%%%%%%%

\subsubsection{Convex SDP relaxation}
There are several methods available for solving SDP problems with convex objective function and constraints of type BMIs, see, e.g., \cite{KocSti:12}. However, there are more efficient solvers  for  convex SDPs (as  problem \eqref{eq_2norm_gram_err_general_m5}) that can scale to large instances such as first order methods or interior point methods \cite{Nes:04}.   Note that in the general case,  i.e.,  for general matrices $A$,   the convex SDP relaxation \eqref{eq_2norm_gram_err_general_m5} is not exact, since  imposing the block $M_{12} =0$, its solution  is  suboptimal for the original SDP problem \eqref{eq_2norm_gram_err_general_m22}. If the  convex SDP relaxation \eqref{eq_2norm_gram_err_general_m5}  admits a solution, then we can initialize the gradient-based methods from previous two sections   with the suboptimal solution provided by this relaxation.  

On the other hand, for certain particular systems  the convex SDP relaxation \eqref{eq_2norm_gram_err_general_m5} is  exact. Indeed, this is the case, e.g., for \textit{positive systems}.   Let us briefly introduce  the notion of positive systems and their main properties, see, e.g., \cite{Ran:15} for a detailed exposition.  A matrix is said to be  \textit{Metzler} if all offdiagonal elements are non-negative. Further, the LTI system  \eqref{system} is  said to be a positive system if $A$ is Metzler and $B,  C \geq 0$. Then, one basic result for positive systems states that they  admit diagonal Lyapunov matrices:
\[  A \; \text{stable}   \; \iff \; \exists P \succ 0 \;\;  \text{diagonal \; s.t.}  \;\;  A^T P + P A \prec 0.  \] 
Recently,  a high interest in positive systems has been shown in the literature. Positive systems occur in modelling of applications with special structures from, e.g.,  biomedicine, economics, data networks, etc., \cite{Ran:15}.  Naturally, these systems are generally highly dimensional and  need to be approximated with the help of model order reduction techniques.  Unfortunately, conventional model reduction techniques do not preserve the positivity. However, working with an approximation violating basic physical constraints it always  leaves the question of how conclusive results on this basis are.  Recently, balanced truncation-based  methods  that preserve  positivity have been proposed in, e.g., \cite{LiLam:11,ReiVir:09}. Note that in all our optimization formulations  we proposed we can easily impose additional convex constraints for preserving positivity: $ \text{offdiagonal}(S-GL) \geq 0$  and $G \geq 0$, where $L$ is fixed a priori. Then, we can apply, e.g., a projected gradient type algorithm for solving the corresponding first two problems.  Moreover, for positive systems the convex  SDP relaxation \eqref{eq_2norm_gram_err_general_m5}  is exact since  there exists diagonal Gramian $\M$ satisfying the Lyapunov equation \eqref{M_Lyap} (see, e.g., \cite{LiLam:11}), and consequently requiring the block $M_{12} =0$ is not restricting the feasible set of the original SDP  problem   \eqref{eq_2norm_gram_err_general_m22}. Furthermore,   in the  convex SDP problem \eqref{eq_2norm_gram_err_general_m5} positivity   can be imposed through new additional convex constraints:
\[    \text{offdiagonal}(\Theta_{22} - Z_{22} L) \geq 0, \;  Z_{22} \geq 0.    \]
It is clear that the reduced order model is also a positive system, i.e.,  $ \text{offdiagonal}(S-GL) \geq 0$  and $G \geq 0$,  provided that $G= M_{22}^{-1} Z_{22}, S= M_{22}^{-1} \Theta_{22}$, $\M$ diagonal, and $\Theta_{22}, Z_{22}$ satisfy the new constraints from above.    Hence, our model reduction techniques are flexible, allowing to incorporate easily constraints for preserving positivity and/or stability.

%%%%%%%%%%%%%%%%%%%%%%%%%%%%%%%%%%%%%%%%%%5

\subsection{Optimization formulation of Problem \ref{prob_optH2_G}}
Using similar arguments as for  the general Problem \ref{prob_optH2_general} we can derive optimization formulations for the particular  Problem \ref{prob_optH2_G}, where now the parametrisation of the reduced order model is done only through matrix $G$. Note that in  Problem \ref{prob_optH2_G} the pair  $(S,L)$ is fixed a priori  such that it is observable. Further,  we can find $C \Pi$ based on  Corollary \ref{cor_momPiY_Krylov}. Moreover, if we choose $S$ unstable, that is $\sigma(S) \subseteq  \mathbb C^+$, then the optimal solution of  Problem \ref{prob_optH2_G} automatically satisfies  $\sigma(S) \cap \sigma(S-GL) = \emptyset$. Then, from \eqref{eq_2norm_gram} it follows that  Problem \ref{prob_optH2_G} can be written as: 
\begin{align}
\label{eq_2norm_gram_err}
\min_{G\ \text{s.t.}\ \sigma(S - GL)\subset\mathbb C^-}\|\K\|_2^2 &=\min_{(G,\W) \ \text{s.t.}\ \sigma(S - GL)\subset\mathbb C^-, \ \eqref{W_Lyap}} C\begin{bmatrix} I &  -\Pi\end{bmatrix}\begin{bmatrix} W_{11} & W_{12} \\ W_{12}^T & W_{22}\end{bmatrix}\begin{bmatrix} I \\  -\Pi^T\end{bmatrix}C^T \nonumber\\
&= \min_{(G,\M) \ \text{s.t.} \ \sigma(S - GL)\subset\mathbb C^-, \ \eqref{M_Lyap}} \begin{bmatrix} B^T & G^T \end{bmatrix} \begin{bmatrix} M_{11} & M_{12} \\ M_{12}^T & M_{22}\end{bmatrix} \begin{bmatrix} B \\ G \end{bmatrix}.\nonumber
\end{align}

\noindent Below we consider again only the formulation in terms of the observability Gramian $\M$:
\begin{align}
\label{eq_2norm_gram_err_G_m1}
& \min_{(G,\M)} \text{Trace}(\B_e^T \M \B_e) \\
& \text{s.t.}: \;\; \sigma(S-GL) \subset \mathbb C^- \quad \text{and} \quad \A_e^T \M + \M \A_e + \C_e^T \C_e =0, \nonumber
\end{align}
with $(S,L)$ fixed and  $\Pi=VT$, where $V$ as in  \eqref{projection_V} and $T$ some fixed non-singular matrix. We clearly observe that in this case the reduced order model is parametrized only in the matrix $G$. Let us denote:
\[ {\cal A}(G) = \begin{bmatrix} A & 0 \\ 0 & S-GL \end{bmatrix}, \;\;   {\cal B}(G) =  \begin{bmatrix} B \\ G \end{bmatrix} \begin{bmatrix} B \\ G  \end{bmatrix}^T, \;\; {\cal C} = \C_e^T \C_e =  \begin{bmatrix} I \\  -T^TV^T\end{bmatrix}C^TC\begin{bmatrix} I \\  -T^TV^T\end{bmatrix}^T.  \]
In the next sections we present several (equivalent) reformulations of the nonconvex problem \eqref{eq_2norm_gram_err_G_m1}, accompanied by their first-order  optimality conditions.% Although, in the derivations we consider the same tools we used for the general Problem 1, for completeness we present the details below. 

%%%%%%%%%%%%%%%%%%%%%%%%%%%%%%%%

\subsubsection{ KKT approach}
We determine the corresponding  KKT system for  optimization problem  \eqref{eq_2norm_gram_err_G_m1}. We first define  the open set  ${\cal D}_{(SL)} =\{  G:  \;  \sigma(S-GL)  \subset \mathbb C^- \}$ where the pair $(S,L)$ is fixed a priori.  Using again that $\text{Trace}(MN) = \text{Trace}(NM)$,    Lagrangian function associated to problem  \eqref{eq_2norm_gram_err_G_m1}  is given by:
\begin{equation}
\label{eq_Lagrange_KKT_G}
\Gamma(\W, \M, G) = \text{Trace}( \M {\cal B}(G) ) + \text{Trace} (\W ({\cal  A}^T(G) \M + \M {\cal A}(G) + {\cal C} )),
\end{equation}
where the multiplier $\W$ is associated to the equality constraint in  \eqref{eq_2norm_gram_err_G_m1}.  Then, we write \eqref{eq_2norm_gram_err_G_m1} into the max-min form:
\begin{equation}
\label{eq_2norm_gram_err_G_minmax_KKT}
\max_{\W}  \min_{\M, G \in {\cal D}_{(SL)}}  \Gamma(\W, \M, G).
\end{equation}
From standard optimization arguments we know that any solution   $(\W, \M, G)$ of problem \eqref{eq_2norm_gram_err_G_minmax_KKT} implies that  $(\M, G)$ is a (possibly local)  minimum point of the original problem \eqref{eq_2norm_gram_err_G_m1} and needs to  satisfy the KKT system:
\[ \nabla  \Gamma (\W, \M, G) =0 \quad \iff \quad
\begin{cases}
\nabla_{\W}  \Gamma (\W, \M,G) =0\\
\nabla_{(\M,G)}  \Gamma (\W, \M,G) =0.
\end{cases}
\]

\noindent Next theorem derives explicitly the corresponding \emph{KKT system}:
\begin{theorem}
The KKT system of optimization problem  \eqref{eq_2norm_gram_err_G_m1} is given by:
\begin{equation}
\label{eq_KKT_cond_explicit_G}
\nabla  \Gamma (\W, \M,G) = 0 \; \iff \;
\begin{cases}
{\cal A}^T(G) \M +  \M  {\cal A}(G)  + {\cal C}  = 0\\
{\cal A}(G) \W +  \W  {\cal A}^T(G) + {\cal B} (G) =0 \\
M_{12}^T B    +     M_{22} G -  M_{12}^T W_{12}  {L}^T -   M_{22}  W_{22} {L}^T  = 0.
\end{cases}
\end{equation}
\end{theorem}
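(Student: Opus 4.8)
The plan is to compute $\nabla \Gamma(\W, \M, G)$ block by block and verify that setting each block to zero reproduces the three equations of \eqref{eq_KKT_cond_explicit_G}; this parallels the derivation of \eqref{eq_KKT_cond_explicit} for the general problem, now specialized to the case in which only $G$ varies while $S$ and $L$ are held fixed. First, differentiating the Lagrangian \eqref{eq_Lagrange_KKT_G} with respect to the multiplier $\W$ simply returns the equality constraint of \eqref{eq_2norm_gram_err_G_m1}, i.e. $\nabla_{\W} \Gamma = {\cal A}^T(G)\M + \M {\cal A}(G) + {\cal C}$, which is the first (observability Lyapunov) equation. Next, collecting the terms of $\Gamma$ that are linear in $\M$ and repeatedly applying the cyclic identity $\text{Trace}(MN)=\text{Trace}(NM)$ together with the symmetry of $\W$, the derivative with respect to $\M$ gives $\nabla_{\M}\Gamma = {\cal A}(G)\W + \W {\cal A}^T(G) + {\cal B}(G)$, which is exactly the second (controllability Lyapunov) equation.

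The only nontrivial step is computing $\nabla_G \Gamma$, which I would carry out via the trace-form directional derivative $\Gamma'(\W,\M,G)\opd G = \text{Trace}(\nabla_G^T \Gamma \, \opd G)$, exactly as in the proof of \eqref{eq_KKT_cond_explicit}. First I would record the variations of the two $G$-dependent coefficients: since ${\cal A}(G)=\diag(A,\,S-GL)$, its derivative in the direction $\opd G$ is $\diag(0,\,-\opd G\, L)$; since ${\cal B}(G)=\begin{bmatrix} B \\ G\end{bmatrix}\begin{bmatrix} B \\ G\end{bmatrix}^T$, its derivative has off-diagonal block $B\,\opd G^T$ (and its transpose) and $(2,2)$-block $\opd G\, G^T + G\,\opd G^T$. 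Substituting these, together with the block partitions of $\W$ and $\M$, into $\text{Trace}\bigl(\M {\cal B}'(G) + \W(({\cal A}'(G))^T\M + \M {\cal A}'(G))\bigr)$, and then gathering every resulting term into the common form $\text{Trace}((\cdot)\,\opd G)$ by cyclicity and the symmetries $M_{22}=M_{22}^T$, $W_{22}=W_{22}^T$, yields $\nabla_G \Gamma = 2\bigl(M_{12}^T B + M_{22} G - M_{12}^T W_{12} L^T - M_{22} W_{22} L^T\bigr)$; equating it to zero gives the third equation.

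The main obstacle is the bookkeeping in this last step: $G$ enters both ${\cal A}(G)$ (linearly, through $S-GL$) and ${\cal B}(G)$ (quadratically), so several trace terms appear, and each must be transposed and cyclically rotated into a common $\text{Trace}((\cdot)\,\opd G)$ form before they can be combined, with signs and Gramian symmetries tracked carefully. A cleaner route that avoids redoing this computation is to observe that $G$ is just the $G$-block of ${\cal X}=[S\ G]$, so the present condition follows from the general result \eqref{eq_KKT_cond_explicit} by restricting the variation to the $G$-direction, i.e. right-multiplying $\nabla_{\cal X}\Gamma$ by ${\cal E}=\begin{bmatrix}0 \\ I_m\end{bmatrix}$. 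Using ${\cal E}^T{\cal E}=I_m$, ${\cal X}{\cal E}=G$, and ${\cal L}^T{\cal E}=-L^T$ collapses the factors ${\cal E}^T$ and turns ${\cal L}^T$ into $-L^T$, recovering precisely the claimed expression for $\nabla_G \Gamma$ and hence the third KKT equation.
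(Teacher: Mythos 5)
Your proposal is correct and follows essentially the same route as the paper: the first two blocks of the KKT system are read off from the Lagrangian, and $\nabla_G\Gamma$ is obtained via the trace-form directional derivative using ${\cal A}'(G)=\diag(0,-\opd G\,L)$ and the block derivative of ${\cal B}(G)$, exactly as in the paper's proof. Your closing observation that the result also follows from the general condition \eqref{eq_KKT_cond_explicit} by right-multiplying $\nabla_{\cal X}\Gamma$ by ${\cal E}$ (using ${\cal E}^T{\cal E}=I_m$, ${\cal X}{\cal E}=G$, ${\cal L}^T{\cal E}=-L^T$) is a valid shortcut the paper does not state, but it does not change the substance of the argument.
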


\begin{proof}
Note that the KKT system has the following explicit form:
\[
\nabla  \Gamma (\W, \M, G) =
\begin{bmatrix}
\nabla_\W \Gamma (\W, \M, G) \\
\nabla_\M \Gamma (\W, \M, G)  \\
\nabla_{G} \Gamma (\W, \M, G)
\end{bmatrix}
=
\begin{bmatrix}
{\cal A}^T(G) \M +  \M  {\cal A}(G)  + {\cal C}  \\
{\cal A}(G) \W +  \W  {\cal A}^T(G) + {\cal B} (G) \\
\nabla_{G} \Gamma (\W, \M, G)
\end{bmatrix}.
\]
It remains to explicitly compute $ \nabla_{G} \Gamma (\W, \M, G)$.  However, to write  the gradient expression, we  introduce a gradient of $\Gamma$ w.r.t. $G$ using the trace of a matrix:
\[    \Gamma' (\W, \M, G) \opd{G}  =    \text{Trace}( \nabla_{G}^T  \Gamma (\W, \M, G) \opd{G}), \quad \text{with} \; \opd{G} \in  \mathbb R^{\nu \times m}. \]
Then:
\begin{align*}
&\text{Trace}( \nabla_{G}^T \Gamma (\W, \M, G) \opd{G}) \\
& =  \text{Trace}( \M {\cal B}' ({G})) + \text{Trace} (\W ( ({\cal A}'({G}))^T \M + \M {\cal  A}'({G})))\\
&=  \text{Trace} \left( \M  \begin{bmatrix} 0 & B  \opd{G}^T \\  \opd{G}  B^T &   \opd{G}   {G}^T   +  {G}   \opd{G}^T \end{bmatrix} + \W \begin{bmatrix} 0 & 0 \\\ 0 & -\opd{G} {L} \end{bmatrix}^T \M + \M \begin{bmatrix} 0 & 0 \\\ 0 & -\opd{G} {L} \end{bmatrix}  \W  \right)\\
&= 2 \text{Trace} \left(   B^T M_{12}  \opd{G} +  G^T M_{22} \opd{G}  - {L} W_{12}^T M_{12} \opd{G}   -  {L} W_{22} M_{22} \opd{G}  \right),
\end{align*}
where in the last equality we used the block structure of $\W$ and $\M$.  Then:
\[  \nabla_{G} \Gamma (\W, \M, {G}) =  2  \left( M_{12}^T B    +     M_{22} {G}  -  M_{12}^T W_{12}  {L}^T  -    M_{22}  W_{22} {L}^T  \right).   \]
Hence, we get the KKT system from \eqref{eq_KKT_cond_explicit_G}.
\end{proof}

\noindent The result of previous theorem  also yields the necessary optimality condition for the optimization problem   \eqref{eq_2norm_gram_err_G_m1} of the model reduction Problem  \ref{prob_optH2_G}:
\begin{lemma}
If   $\M $ and ${G} \in {\cal D}_{(SL)}$  solves the optimization problem   \eqref{eq_2norm_gram_err_G_m1} corresponding to the model reduction Problem \ref{prob_optH2_G}, then there exists $\W$ such that the triplet $(\W,\M,{G})$ solves the KKT system \eqref{eq_KKT_cond_explicit_G}.
\end{lemma}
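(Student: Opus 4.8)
The plan is to recognize the statement as the assertion that the Karush--Kuhn--Tucker conditions are necessary at a local minimizer of the equality-constrained problem \eqref{eq_2norm_gram_err_G_m1}, and to supply the appropriate constraint qualification. First I would observe that the feasible set is described by two constraints of different nature: the open-set constraint $G \in {\cal D}_{(SL)}$ and the matrix equality $\A_e^T \M + \M \A_e + \C_e^T \C_e =0$. Since ${\cal D}_{(SL)}$ is an open set, any local minimizer $(\M,G)$ is automatically interior with respect to the stability constraint, so this constraint carries no multiplier and only the equality constraint is genuinely active. The problem thus reduces locally to minimizing the smooth objective $\text{Trace}(\B_e^T \M \B_e)$ subject to one matrix-valued equality constraint.

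The key step, and the only place requiring genuine care, is to verify a constraint qualification guaranteeing that a Lagrange multiplier exists. Let $h(\M,G) = {\cal A}^T(G)\M + \M {\cal A}(G) + {\cal C}$, a polynomial (hence smooth) map into the space of symmetric matrices. Because $G \in {\cal D}_{(SL)}$ forces ${\cal A}(G)$ to be stable, the partial differential $\opd\M \mapsto {\cal A}^T(G)\opd\M + \opd\M {\cal A}(G)$ is a bijection of the symmetric matrices onto themselves; this is exactly the unique solvability asserted in Lemma \ref{lema:lyap_sol} together with the surjectivity already established in the partial minimization analysis. Surjectivity of this partial differential alone forces the full differential of $h$ at $(\M,G)$ to be surjective, which is precisely the regularity needed for the Lagrange multiplier theorem.

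With the constraint qualification in hand, I would invoke the Lagrange multiplier theorem for smooth equality-constrained optimization: at the local minimizer $(\M,G)$ there exists a symmetric matrix $\W$, the multiplier associated with $h$, such that the Lagrangian $\Gamma$ in \eqref{eq_Lagrange_KKT_G} is stationary, i.e. $\nabla \Gamma(\W,\M,G)=0$. Finally, by the previous theorem this stationarity is exactly the explicit KKT system \eqref{eq_KKT_cond_explicit_G}, which finishes the proof. The main obstacle is indeed the constraint qualification, but since stability of ${\cal A}(G)$ renders the Lyapunov operator invertible on symmetric matrices, that obstacle dissolves and the rest is a direct appeal to first-order optimality combined with the gradient already computed.
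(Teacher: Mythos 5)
Your proposal is correct and follows essentially the same route as the paper, which presents this lemma as a direct consequence of the explicit KKT computation together with the standard Lagrangian (max--min) first-order necessary conditions. The one genuine addition you make is to spell out the constraint qualification --- surjectivity of the Lyapunov operator $\opd\M \mapsto {\cal A}^T(G)\opd\M + \opd\M\,{\cal A}(G)$ on symmetric matrices, which holds because $A$ and $S-GL$ are stable for $G\in{\cal D}_{(SL)}$ --- a point the paper leaves implicit, so your version is a rigorous completion rather than a different argument.
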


%%%%%%%%%%%%%%%%%%%%%%%%%%%%%%%%%%%%%5

\subsubsection{Partial minimization approach}
\label{sect_gradientH2_G}
Consider now the optimization problem  \eqref{eq_2norm_gram_err_G_m1} where recall that the pair $(S,L)$ is fixed a priori, $\Pi= VT$, and $\A$ depends on $G$, i.e.,  $\A = {\cal A}(G)$. Then, the partial minimization holds for \eqref{eq_2norm_gram_err_G_m1}:  
\[ \eqref{eq_2norm_gram_err_G_m1}  = \min_{G: \sigma(S-GL) \subset \mathbb C^-}  \left(  \min_{\M: \A_e^T \M + \M \A_e + \C_e^T \C_e =0} \text{Trace}(\B_e^T \M \B_e)) \right).  \]
However, if $S-GL$ and  $A$ are stable,  it follows from Lemma \ref{lema:lyap_sol}  that there  exists unique  $\M =\M(G) \succeq 0$  solution of the Lyapunov equation:
\[  \A_e^T \M + \M \A_e + \C_e^T \C_e =0.  \]
Hence, for any  stabilizable $G$, the partial minimization in $\M$ leads to an optimal value: 
\[  f(G) = \min_{\M: \A_e^T \M + \M \A_e + \C_e^T \C_e =0} \text{Trace}(\B_e^T \M \B_e)) = \text{Trace} \left( \begin{bmatrix} B \\ G \end{bmatrix}^T  \M(G) \begin{bmatrix} B \\ G \end{bmatrix} \right),  \]
where $\M(G)$ is the unique solution of the Lyapunov equation:
\begin{align}
\label{lyap_eq_11_G}
\begin{bmatrix} A & 0 \\ 0 & S-GL \end{bmatrix}^T \M  + \M  \begin{bmatrix} A & 0 \\ 0 & S-GL \end{bmatrix} + \begin{bmatrix} C^TC & -C^T C_V \\ -C_V^T C & C_V^T C_V  \end{bmatrix} =0,
\end{align}
with $C_V = C \Pi = C V T$. Explicitly, in terms of $G$, we have:
\begin{align}
\label{eq_2norm_gram_err_G_m2}
& \min_{G}  f(G) \;\; \left( =\text{Trace} \left( \begin{bmatrix} B \\ G \end{bmatrix}^T  \M(G) \begin{bmatrix} B \\ G \end{bmatrix} \right)\right) \\
& \text{s.t.}: \;\; \sigma(S-GL) \subset \mathbb C^- \quad \text{and} \quad \eqref{lyap_eq_G_1}, \nonumber
\end{align}
where $\M(G)$ is the solution of the Lyapunov equation:
\begin{align}
\label{lyap_eq_G_1}
\begin{bmatrix} A & 0 \\ 0 & S-GL \end{bmatrix}^T \M(G)  + \M(G)  \begin{bmatrix} A & 0 \\ 0 & S-GL \end{bmatrix} + \begin{bmatrix} C^TC & -C^T C_V \\ -C_V^T C & C_V^T C_V  \end{bmatrix} =0.
\end{align}

\noindent We now compute the expression of the gradient of the objective function of \eqref{eq_2norm_gram_err_G_m2}.  Using again that $\text{Trace}(MN) = \text{Trace}(NM)$, the non-convex objective function of \eqref{eq_2norm_gram_err_G_m2} becomes:
\[  f(G) =  \text{Trace} \left( \begin{bmatrix} B \\ G \end{bmatrix}^T  \M(G) \begin{bmatrix} B \\ G \end{bmatrix} \right) = \text{Trace} \left(  \M(G) \begin{bmatrix} B \\ G \end{bmatrix} \begin{bmatrix} B \\ G \end{bmatrix}^T  \right) = \text{Trace} \left(  \M(G)  {\cal B}(G) \right). \]

\begin{theorem}
The  objective function  $f$ of \eqref{eq_2norm_gram_err_G_m2}  is differentiable on the set of stable matrices ${\cal D}_{(SL)}$  and the gradient of $f$ at $G \in {\cal D}_{(SL)}$ is given by:
\begin{align}
\label{eq_gradient1_G}
\nabla f(G) =  2  \left[- M_{12}^T(G)  W_{12}(G)  {L}^T -    M_{22}(G) W_{22}(G) {L}^T +   M_{12}^T(G) B    +     M_{22}(G) G  \right],
\end{align}
where $\M(G) $ solves the Lyapunov equation \eqref{lyap_eq_G_1} and   $\W(G) $ solves the Lyapunov equation:
\begin{equation}
\label{WX_lyap_G}
\begin{bmatrix} A & 0 \\ 0 & S-GL \end{bmatrix} \W(G) + \W(G) \begin{bmatrix} A & 0 \\ 0 & S- GL \end{bmatrix}^T+   {\cal B}(G) =0.
\end{equation}
\end{theorem}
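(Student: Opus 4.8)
The plan is to proceed exactly as in the proof of the gradient formula \eqref{eq_gradient1} for the general problem, now specialized to the single matrix variable $G$ since $S$ and $L$ are held fixed. First I would write the directional derivative $f'(G)\opd G$ for an arbitrary increment $\opd G \in \mathbb R^{\nu \times m}$ in gradient form through $f'(G)\opd G = \text{Trace}(\nabla f(G)^T \opd G)$, and then split it by the product rule applied to $f(G) = \text{Trace}(\M(G){\cal B}(G))$ into the two summands $\text{Trace}(\M'(G){\cal B}(G))$ and $\text{Trace}(\M(G){\cal B}'(G))$, which I compute separately.

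For differentiability and for the first summand I would invoke the Implicit Function Theorem on the Lyapunov constraint \eqref{lyap_eq_G_1}, written as $\Phi(G,\M)+{\cal C}=0$ with $\Phi(G,\M) = {\cal A}^T(G)\M + \M{\cal A}(G)$. Since $G \in {\cal D}_{(SL)}$ and ${\cal D}_{(SL)}$ is an open set of stable matrices, Lemma \ref{lema:lyap_sol} guarantees that the partial differential $\opd\M \mapsto {\cal A}^T(G)\opd\M + \opd\M{\cal A}(G)$ is a bijection (its inverse is the unique Lyapunov solution operator), hence surjective, so the IFT yields differentiability of $G \mapsto \M(G)$ together with the sensitivity relation ${\cal A}^T(G)\M'(G) + \M'(G){\cal A}(G) + \Phi_G(G,\M)\opd G = 0$. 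Here $\Phi_G(G,\M)\opd G = \begin{bmatrix} 0 & 0 \\ 0 & -\opd G\, L \end{bmatrix}^T \M + \M \begin{bmatrix} 0 & 0 \\ 0 & -\opd G\, L \end{bmatrix}$, the minus sign arising because the increment of $S-GL$ in the direction $\opd G$ is $-\opd G\, L$.

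The central step, and the step I expect to be the main obstacle, is eliminating the implicit quantity $\M'(G)$ from $\text{Trace}(\M'(G){\cal B}(G))$, since $\M'(G)$ has no closed form. I would introduce the adjoint Gramian $\W(G)$ solving \eqref{WX_lyap_G}, namely ${\cal A}(G)\W + \W{\cal A}^T(G) + {\cal B}(G)=0$, and perform the standard adjoint cancellation: multiply the sensitivity relation on the left by $\W(G)$ and multiply \eqref{WX_lyap_G} on the right by $\M'(G)$, subtract, take the trace, and use cyclicity of the trace so that all $\M'(G)$-terms cancel. This replaces $\text{Trace}(\M'(G){\cal B}(G))$ by $\text{Trace}(\W(G)\,\Phi_G(G,\M)\opd G)$, which after inserting the block partitions of $\W$ and $\M$ and using the increment $-\opd G\, L$ gives $-2\,\text{Trace}(L\, W_{12}^T M_{12}\,\opd G + L\, W_{22}M_{22}\,\opd G)$, contributing precisely the terms $-M_{12}^T W_{12}L^T$ and $-M_{22}W_{22}L^T$ to the gradient.

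Finally the second summand is a direct computation: from ${\cal B}(G) = \begin{bmatrix} B \\ G \end{bmatrix}\begin{bmatrix} B \\ G \end{bmatrix}^T$ one obtains ${\cal B}'(G)\opd G = \begin{bmatrix} 0 & B\,\opd G^T \\ \opd G\, B^T & \opd G\, G^T + G\,\opd G^T \end{bmatrix}$, and multiplying by $\M$, using its block structure together with the definition of the trace, yields $2\,\text{Trace}(B^T M_{12}\,\opd G + G^T M_{22}\,\opd G)$, contributing $M_{12}^T B$ and $M_{22}G$. Summing the two contributions and reading off the coefficient of $\opd G$ under the trace gives exactly \eqref{eq_gradient1_G}. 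The only extra care relative to the general-case proof is the sign bookkeeping induced by $S-GL$ (which produces the minus signs on the $L^T$-terms) and the disappearance of the selector matrices ${\cal E}$ and ${\cal L}$ that were present in the full ${\cal X}$-parametrization.
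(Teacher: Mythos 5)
Your proposal is correct and follows essentially the same route as the paper's own proof: the same split of $f'(G)\opd G$ into $\text{Trace}(\M'(G){\cal B}(G))$ and $\text{Trace}(\M(G){\cal B}'(G))$, the same use of the Implicit Function Theorem on the Lyapunov constraint for differentiability, and the same adjoint cancellation via $\W(G)$ to eliminate $\M'(G)$. The sign bookkeeping from the $-\opd G\,L$ increment is handled exactly as in the paper.
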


\begin{proof}
To  computing the expression of the gradient $\nabla f(G) \in \rset^{\nu \times m}$ we write the derivative  $f'(G)  \opd{G}$ for some $\opd{G} \in \rset^{\nu \times m}$ in gradient form using the trace. We introduce the gradient as:
\[ f'(G)  \opd{G} = \text{Trace} \left( \nabla f(G)^T \opd{G}  \right). \]
From the expression of $f(G)$ we have:
\[ f'(G)  \opd{G} =  \text{Trace} \left(  \M'(G) {\cal B}(G) + \M(G) {\cal B}'(G) \right). \]
We compute separately the two terms in the above expression.  Let 
\[  \Phi(G,\M) = \begin{bmatrix} A & 0 \\ 0 & S-GL \end{bmatrix}^T \M  + \M  \begin{bmatrix} A & 0 \\ 0 & S-GL \end{bmatrix}.  \]
Since  $G \in {\cal D}_{(SL)}$ and  ${\cal D}_{(SL)}$ is an open set, then by Lemma \ref{lema:lyap_sol}  we have that $ \Phi_{\M}({G},\M) \opd{\M}$ given by:
\begin{align*}
\Phi_{\M}({G},\M) \opd{\M} &=\begin{bmatrix} A & 0 \\ 0 & S-GL \end{bmatrix}^T \opd{\M}  + \opd{\M}  \begin{bmatrix} A & 0 \\ 0 & S-GL \end{bmatrix}
\end{align*}
is surjective and also we have:
\begin{align*}
\Phi_{G}({G},\M)  \opd{G} &= \begin{bmatrix} 0 & 0 \\ 0 & -\opd{G} {L} \end{bmatrix}^T \M + \M \begin{bmatrix} 0 & 0 \\  0 & -\opd{G} {L} \end{bmatrix}.
\end{align*}
Since $\Phi(G,\M) +{\cal C}=0$,   the Implicit Function Theorem yields the differentiability of $\M(G)$ and the following relation:
\begin{align}
\label{eq_TFI1_G}
\begin{bmatrix} A & 0 \\ 0 & S-GL \end{bmatrix}^T \M'(G)  + \M'(G) \begin{bmatrix} A & 0 \\ 0 & S-GL \end{bmatrix} &  + \begin{bmatrix} 0 & 0 \\ 0 & - \opd{G} {L} \end{bmatrix}^T \M(G) \nonumber \\
&   + \M(G) \begin{bmatrix} 0 & 0 \\\ 0 & -\opd{G} {L} \end{bmatrix}=0.
\end{align}
Consider the Lyapunov equation \eqref{W_Lyap} with the unique  solution $\W(G)$ provided $G$ is stabilizable:
\begin{align}
\label{lyap2_G}
\begin{bmatrix} A & 0 \\ 0 & S-GL \end{bmatrix} \W(G)  + \W(G)   \begin{bmatrix} A & 0 \\ 0 & S-GL \end{bmatrix}^T + {\cal B}(G) =0.
\end{align}
Subtracting \eqref{eq_TFI1_G} multiplied by $\W(G)$ to the left from \eqref{lyap2_G} multiplied by $\M'(G)$ to the right, taking the trace, and reducing the appropriate terms,  yields:
\begin{align}  
\label{eq:df1_G}
\text{Trace} \left( \M'(G) {\cal B}(G)  \right) & =  \text{Trace} \left( \W(G)\begin{bmatrix} 0 & 0 \\ 0 & -{\opd G} L \end{bmatrix}^T \M(G) + \M(G) \begin{bmatrix} 0 & 0 \\ 0 & -{\opd G} L \end{bmatrix} \W(G)\right) \nonumber  \\
&=  2 \text{Trace} \left(- L W_{12}^T(G)  M_{12}(G)  {\opd G}  -  L   W_{22}(G) M_{22}(G)  {\opd G} \right).
\end{align}

\noindent Similarly, for the second term using the block structure of  $\M$ and the definition of trace, we get:
\begin{align}
\label{eq:df2_G}
\text{Trace} \left(  M(G) {\cal B}'({G})   \right) & =  \text{Trace} \left( \M({G})  \begin{bmatrix} 0 & B   \opd{G}^T   \\    \opd{G} B^T &  \opd{G} {G}^T + {G}  \opd{G}^T \end{bmatrix}   \right) \nonumber \\
& = 2 \text{Trace} \left(  B^T  M_{12}(G)  \opd{G} +  G^T  M_{22}({G}) \opd{G} \right).
\end{align}
Hence, from \eqref{eq:df1_G} and \eqref{eq:df2_G} we get the closed form expression for the gradient \eqref{eq_gradient1_G}.
\end{proof}

\noindent Note that the expression of the gradient   $\nabla f$ from \eqref{eq_gradient1_G} is the same as the partial gradient of the Lagrangian $ \nabla_{G} \Gamma $ from \eqref{eq_KKT_cond_explicit_G}.  The previous result also yields the necessary optimality condition for the model reduction Problem  \ref{prob_optH2_G}  expressed in terms of the optimization problem \eqref{eq_2norm_gram_err_G_m2}:
\begin{lemma}
If   ${G} \in {\cal D}_{(SL)}$  solves the optimization problem   \eqref{eq_2norm_gram_err_G_m2} corresponding to the model reduction Problem \ref{prob_optH2_G}, then
\[   M_{12}^T({G})  W_{12}({G})  {L}^T +   M_{22} ({G}) W_{22}({G}) {L}^T =    M_{12}^T({G}) B    +     M_{22}({G}) {G}   \]
where $\M({G}) $ solves the Lyapunov equation \eqref{lyap_eq_G_1} and   $\W({G}) $ solves the Lyapunov equation \eqref{WX_lyap_G}.
\end{lemma}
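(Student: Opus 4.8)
The plan is to derive the stated identity as the first-order necessary condition (Fermat's rule) for a minimizer lying in the interior of the feasible set. The key observation is that the stability constraint $\sigma(S-GL)\subset\mathbb{C}^-$ defines precisely the \emph{open} set ${\cal D}_{(SL)}$, so any local solution $G$ of \eqref{eq_2norm_gram_err_G_m2} is an interior point. Consequently there are no active inequality constraints and no multipliers to carry along: the minimizer must simply be a stationary point, i.e.\ $\nabla f(G)=0$.

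First I would recall that, by the preceding theorem, the objective $f$ is differentiable on ${\cal D}_{(SL)}$, with gradient given in closed form by \eqref{eq_gradient1_G}. Here $\M(G)$ and $\W(G)$ are the unique positive semidefinite solutions of the Lyapunov equations \eqref{lyap_eq_G_1} and \eqref{WX_lyap_G}; their existence and uniqueness follow from Lemma \ref{lema:lyap_sol}, since both $A$ and $S-GL$ are stable for every $G\in{\cal D}_{(SL)}$. I would then set this gradient to zero. Dividing \eqref{eq_gradient1_G} by $2$ gives
\[ -M_{12}^T(G)W_{12}(G)L^T - M_{22}(G)W_{22}(G)L^T + M_{12}^T(G)B + M_{22}(G)G = 0, \]
and moving the two $L^T$-terms to the right-hand side yields exactly the claimed equality, completing the proof.

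There is no substantial obstacle here: the genuine work --- establishing differentiability and computing the explicit gradient through the Implicit Function Theorem applied to the Lyapunov constraint --- has already been carried out in the theorem immediately preceding this lemma. The only subtlety worth flagging is the role of openness: because ${\cal D}_{(SL)}$ is open, stationarity is the complete first-order characterization, with no boundary or complementarity terms to account for, and the lemma is therefore an immediate corollary of the gradient formula \eqref{eq_gradient1_G}.
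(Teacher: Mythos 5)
Your proof is correct and follows exactly the paper's (implicit) argument: since ${\cal D}_{(SL)}$ is open, a local minimizer must satisfy $\nabla f(G)=0$, and substituting the gradient formula \eqref{eq_gradient1_G} and rearranging gives the stated identity. The sign bookkeeping when moving the $L^T$-terms across the equality is also right, so nothing is missing.
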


\noindent We can replace the open set ${\cal D}_{(SL)}$ with any sublevel set:
\[  {\cal N}_{(SL)}^{{G}_0} = \{ G  \in  {\cal D}_{(SL)}: \; f({G}) \leq f({G}_0)  \},  \]
where ${G}_0 \in  {\cal D}_{(SL)}$ is any initial stable reduced order system matrix.  Using similar arguments as in \cite{Toi:85} we can show that ${\cal N}_{(SL)}^{{G}_0}$ is a compact set.   Then,  the theorem of Weierstrass implies that for any  given matrix ${G}_0 \in  {\cal D}_{(SL)}$ ,  the model reduction Problem  \ref{prob_optH2_G}  given by  optimization formulation \eqref{eq_2norm_gram_err_G_m1} has a global minimum  in the sublevel set ${\cal N}_{(SL)}^{{G}_0}$. We can also show that the gradient $\nabla f({G})$ is Lipschitz continuous on the compact sublevel  set   ${\cal N}_{(SL)}^{{G}_0}$.  Let us briefly sketch the proof of this statement. First we observe that $\M({G})$ and $\W({G})$ are continuous functions and moreover there exists finite $\ell_M > 0$ such that:
\[   \|  \M({G}) -   \M(\bar{G})  \| \leq \ell_M \|  {G}  - \bar{G} \| \quad \forall {G}, \bar{G} \in   {\cal N}_{(SL)}^{{G}_0}.  \]
Then, using the expression of  $\nabla f({G})$, compactness of  ${\cal N}_{(SL)}^{{G}_0}$, continuity of $\M({G})$ and $\W({G})$, and the previous relation we conclude that there exists $\ell_f >0$ such that:
\[  \|  \nabla f({G}) -   \nabla f (\bar{G})  \| \leq \ell_f  \|  {G}  - \bar{G}\|  \quad \forall {G}, \bar{G} \in   {\cal N}_{(SL)}^{{G}_0}.  \]
This property of the gradient is useful when analyzing the convergence behavior of the algorithm  we propose for solving the optimization problem \eqref{eq_2norm_gram_err_G_m2}.

%%%%%%%%%%%%%%%%%%%%%%%%%%%%%%%%%%%%%%%%%%%%

\subsubsection{SDP approach}
Alternatively, problem \eqref{eq_2norm_gram_err_G_m1} can be written equivalently in terms of matrix inequalities (semidefinite programming):
\begin{align}
\label{eq_2norm_gram_err_G_m3}
& \min_{(G, \M)} \text{Trace} \left( \begin{bmatrix} B \\ G \end{bmatrix}^T  \M \begin{bmatrix} B \\ G \end{bmatrix} \right) \\
& \text{s.t.}: \;\; \M \succeq 0 \quad \text{and} \quad
\begin{bmatrix} A & 0 \\ 0 & S-GL \end{bmatrix}^T \M  + \M  \begin{bmatrix} A & 0 \\ 0 & S-GL \end{bmatrix} + \begin{bmatrix} C^TC & -C^T C_V \\ -C_V^T C & C_V^TC_V  \end{bmatrix} \preceq 0, \nonumber
\end{align}
where recall that the pair $(S, L)$ is fixed a priori.  Clearly, SDP problem \eqref{eq_2norm_gram_err_G_m3} is not convex since it contains bilinear matrix inequalities. However, next theorem proves that we can obtain a suboptimal solution through convex relaxation.

\begin{theorem}
If the following convex SDP relaxation:
\begin{align}
\label{eq_2norm_gram_err_G_m6}
& \min_{(G,X_{22},Y_{22},Z_{22}), \; M_{11} \succeq 0, M_{22} \succeq 0} \text{\emph{Trace}} \left( B^T M_{11}B + X_{22} \right) \\
& \text{s.t.}:  S^T M_{22} - L^T Z_{22}^T + M_{22}S - Z_{22}L +  C_V^TC_V \preceq Y_{22} \nonumber \\
& \qquad  \begin{bmatrix}  X_{22} & Z_{22} \\ Z_{22}^T & M_{22}\end{bmatrix} \succeq 0,\;  \begin{bmatrix} A^TM_{11} + M_{11}A^T +C^TC &  -C^TC_V \\
- C_V^T C & Y_{22} \end{bmatrix}  \preceq 0  \nonumber
\end{align}
has a solution, then we can recover a suboptimal solution of the model reduction Problem  \ref{prob_optH2_G}  expreesed in terms of the SDP  problem \eqref{eq_2norm_gram_err_G_m1}  through the relations:
\[ G= M_{22}^{-1} Z_{22} \quad \text{and} \quad \M=\text{\emph{diag}}(M_{11}, M_{22}).   \]
\end{theorem}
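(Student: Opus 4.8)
The plan is to mirror the convexification already carried out for Problem~\ref{prob_optH2_general}, specializing it to the present situation in which the pair $(S,L)$ is fixed a priori so that $S$ is no longer an optimization variable. First I would start from the bilinear SDP \eqref{eq_2norm_gram_err_G_m3} and expand both the cost and the matrix inequality using the block partition $\M=\begin{bmatrix} M_{11} & M_{12} \\ M_{12}^T & M_{22}\end{bmatrix}$ together with the error realization $\A_e=\diag(A,S-GL)$, $\B_e=[B;\,G]$ and $\C_e^T\C_e=\begin{bmatrix} C^TC & -C^TC_V \\ -C_V^TC & C_V^TC_V\end{bmatrix}$, where $C_V=C\Pi=CVT$. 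The objective then reads $\text{Trace}(B^TM_{11}B+B^TM_{12}G+G^TM_{12}^TB+G^TM_{22}G)$, and the constraint becomes a $2\times2$ block LMI whose off-diagonal block is $A^TM_{12}+M_{12}(S-GL)-C^TC_V$ and whose $(2,2)$ block is $(S-GL)^TM_{22}+M_{22}(S-GL)+C_V^TC_V$. As in the proof for Problem~\ref{prob_optH2_general}, these bilinear terms rule out a direct convex reformulation.

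The key step is to restrict attention to the subclass $M_{12}=0$. This decouples the off-diagonal block to the constant $-C^TC_V$ and leaves the objective $\text{Trace}(B^TM_{11}B+G^TM_{22}G)$. Introducing the auxiliary variables $X_{22}\succeq G^TM_{22}G$ and $Y_{22}\succeq(S-GL)^TM_{22}+M_{22}(S-GL)+C_V^TC_V$ lifts the cost to $\text{Trace}(B^TM_{11}B+X_{22})$ and splits off the block $\begin{bmatrix} A^TM_{11}+M_{11}A+C^TC & -C^TC_V \\ -C_V^TC & Y_{22}\end{bmatrix}\preceq0$. I would then linearize with the single change of variable $Z_{22}=M_{22}G$; note that here, unlike Problem~\ref{prob_optH2_general}, no substitution $\Theta_{22}=M_{22}S$ is needed because $S$ is fixed, so $M_{22}S$ already enters linearly. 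A Schur complement (using $M_{22}\succ0$) converts the quadratic constraint $X_{22}\succeq G^TM_{22}G$ into the LMI $\begin{bmatrix} X_{22} & Z_{22} \\ Z_{22}^T & M_{22}\end{bmatrix}\succeq0$, while $M_{22}GL=Z_{22}L$ and $L^TG^TM_{22}=L^TZ_{22}^T$ turn the $Y_{22}$ constraint into $S^TM_{22}-L^TZ_{22}^T+M_{22}S-Z_{22}L+C_V^TC_V\preceq Y_{22}$, recovering exactly the convex SDP \eqref{eq_2norm_gram_err_G_m6}.

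Finally, from any solution of \eqref{eq_2norm_gram_err_G_m6} I would recover $G=M_{22}^{-1}Z_{22}$ and $\M=\diag(M_{11},M_{22})$, and verify that this triple is feasible for the original SDP \eqref{eq_2norm_gram_err_G_m3} with the block $M_{12}=0$; its cost equals the optimal value of \eqref{eq_2norm_gram_err_G_m6}, so it is an admissible, hence suboptimal, point of \eqref{eq_2norm_gram_err_G_m1}. The only genuinely nontrivial point is conceptual rather than computational: imposing $M_{12}=0$ shrinks the feasible set of the original non-convex SDP, so exactness is lost in general and one can claim only suboptimality, whereas the block expansions, the Schur complement and the change of variable are all routine. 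It is worth remarking that this fixed-$S$ case is strictly simpler than Problem~\ref{prob_optH2_general}, since the absence of the variable $\Theta_{22}$ removes one source of bilinearity.
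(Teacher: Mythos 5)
Your proposal is correct and follows essentially the same route as the paper: block expansion of $\M$, restriction to $M_{12}=0$, lifting with $X_{22}$ and $Y_{22}$, the substitution $Z_{22}=M_{22}G$, a Schur complement, and recovery of the suboptimal pair $(G,\M)$. The only difference is that you impose $M_{12}=0$ before introducing the auxiliary variables while the paper carries $M_{12}$ through the lifting and Schur-complement steps and sets it to zero afterwards, which is an immaterial reordering.
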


\begin{proof}
Using the block form of $\M$ we get the equivalent problem:
\begin{align}
\label{eq_2norm_gram_err_G_m4}
& \min_{G, \; \M \succeq 0} \text{Trace} \left( B^T M_{11}B + B^TM_{12}G + G^TM_{12}^T B + G^TM_{22}G \right) \\
& \text{s.t.}:
\begin{bmatrix} A^TM_{11} + M_{11}A^T +C^TC & A^T M_{12} +M_{12} (S-GL) -C^TC_V \\
(S-GL)^TM_{12}^T + M_{12}^TA - C_V^T C & (S-GL)^T M_{22} + M_{22}(S-GL) +  C_V^TC_V \end{bmatrix}  \preceq 0. \nonumber
\end{align}
If we introduce additional variables we can reformulate the previous problem as an SDP subject to bilinear matrix inequalities. Indeed, we have the equivalent formulation:
\begin{align}
\label{eq_2norm_gram_err_G_m44}
& \min_{(G,X_{22},Y_{22}),\;  \M \succeq 0  } \text{Trace} \left( B^T M_{11}B + X_{22} \right) \\
& \text{s.t.}: X_{22} \succeq B^TM_{12}G + G^TM_{12}^T B + G^T M_{22} G \nonumber\\ & \qquad (S-GL)^T M_{22} + M_{22}(S-GL) + C_V^TC_V \preceq Y_{22} \nonumber \\
&\qquad  \begin{bmatrix} A^TM_{11} + M_{11}A^T +C^TC &  A^T M_{12} +M_{12} (S-GL)-C^TC_V \nonumber \\
(S-GL)^TM_{12}^T + M_{12}^TA - C_V^T C & Y_{22} \end{bmatrix}  \preceq 0, \nonumber
\end{align}
and  using now Schur complement we arrive at an SDP with convex objective function but with non-convex constraints of type BMIs:
\begin{align}
\label{eq_2norm_gram_err_G_m444}
& \min_{(G,X_{22},Y_{22}),\; \M \succeq 0} \text{Trace} \left( B^T M_{11}B + X_{22} \right) \\
& \text{s.t.}: (S-GL)^T M_{22} + M_{22}(S-GL) + C_V^TC_V \preceq Y_{22} \nonumber \\
&\qquad  \begin{bmatrix} X_{22} - B^TM_{12}G - G^TM_{12}^T B &  G^T M_{22} \\ M_{22} G  & M_{22} \end{bmatrix}  \succeq 0 \nonumber\\
&\qquad  \begin{bmatrix} A^TM_{11} + M_{11}A^T +C^TC &  A^T M_{12} +M_{12} (S-GL)-C^TC_V \\ (S-GL)^TM_{12}^T + M_{12}^TA - C_V^T C & Y_{22} \end{bmatrix}  \preceq 0, \nonumber
\end{align}
Problem \eqref{eq_2norm_gram_err_G_m444} is not convex since it contains bilinear matrix terms. However, if we assume that the block $M_{12} =0$, then problem \eqref{eq_2norm_gram_err_G_m444} can be recast as a  convex SDP. That is, for $M_{12} =0$ from \eqref{eq_2norm_gram_err_G_m444} we get:
\begin{align}
\label{eq_2norm_gram_err_G_m5}
& \min_{(G,X_{22},Y_{22}),\;  M_{11} \succeq 0, M_{22} \succeq 0} \text{Trace} \left( B^T M_{11}B + X_{22} \right) \\
& \text{s.t.}: X_{22} \succeq G^T M_{22} G, \;\; (S-GL)^T M_{22} + M_{22}(S-GL) + C_V^TC_V \preceq Y_{22} \nonumber \\
&\qquad  \begin{bmatrix} A^TM_{11} + M_{11}A^T +C^TC &  -C^TC_V \\
- C_V^T C & Y_{22} \end{bmatrix}  \preceq 0. \nonumber
\end{align}
Denoting by $Z_{22} = M_{22}G$ and using the Schur complement, problem \eqref{eq_2norm_gram_err_G_m5} becomes the convex SDP \eqref{eq_2norm_gram_err_G_m6}.  Moreover, if \eqref{eq_2norm_gram_err_G_m6} has a solution, then we can recover $G= M_{22}^{-1} Z_{22}$ and $\M = \text{diag}(M_{11},M_{22})$.   Note also that the solution $(G,\M)$ of this convex SDP problem is a suboptimal solution of the original problem \eqref{eq_2norm_gram_err_G_m1} since we restrict $M_{12} =0$. 
\end{proof}

\noindent All the numerical optimization algorithms presented in Section \ref{sec:numoptalg1} can be applied to also solve the three optimization problems  from this section corresponding to the relaxed Problem 2.  Thus we  omit these details here and refer to Section \ref{sec:numoptalg1}.

%%%%%%%%%%%%%%%%%%%%%%%%%%%%%%%%%%%%%%%%%

\section{Illustrative examples}
\label{sect_expl}
\noindent In this section, we illustrate the efficiency of our results numerically on examples such as a double-pendulum \cite{fujimoto-ono-hayakawa-CDC2010} or a CD player \cite{gugercin-antoulas-beattie-SIAM2008}. %, and transportation networks modeled as positive systems \cite{Ran:15}. 
In particular, we compute and compare  reduced order models   for these test systems  achieving (possibly) the minimum  $H_2$-norm.

\subsection{Cart with a double pendulum controller}
\noindent Consider the following cart system with a double-pendulum controller,
depicted in Figure \ref{fig_cart_pendulum}, see also \cite{fujimoto-ono-hayakawa-CDC2010,ionescu-astolfi-CDC2013}. 
\begin{figure}[h]
\centering\small \psfrag{K}{$k$}\psfrag{U1}{$\mu_{1}$} \psfrag{Q1}{$q_{1}$}
\psfrag{M1}{$m_{1}$}\psfrag{U2}{$\mu_{2}$}\psfrag{Q2}{$q_{2}$}\psfrag{L2}{$l_{2}$}\psfrag{M2}{$m_{2}$}
\psfrag{U3}{$\mu_{3}$}\psfrag{Q3}{$q_{3}$}\psfrag{L3}{$l_{3}$}\psfrag{M3}{$m_{3}$}
\includegraphics[clip,scale=0.75]{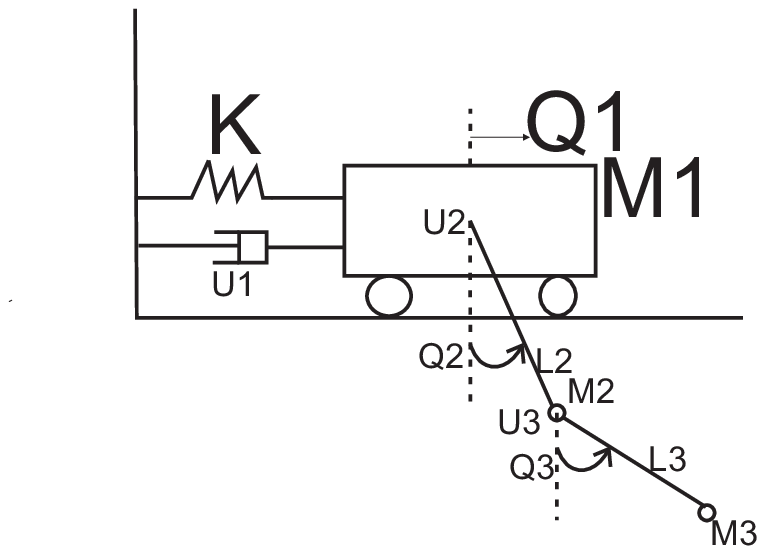}
\caption{Cart system with a double-pendulum controller}
\label{fig_cart_pendulum}
\end{figure}

\noindent Defining the state as $x=[q_{1}\ \dot{q}_{1\,}q_{2}\ \dot{q}_{2}\ q_{3}\ \dot{q}_{3}]^{T}\in\mathbb{R}^{6}$
and selecting the output as $y=x_{1}$, we obtain a 6th order system described
by equations of the form (\ref{system}), with:
\begin{align}
\label{eq_CL_example}
A&=\begin{bmatrix}0 & 1 & 0 & 0 & 0 & 0\\
-1 & -1 & 98/5 & 1 & 0 & 0\\
0 & 0 & 0 & 1 & 0 & 0\\
1 & 1 & -196/5 & -2 & 49/5 & 1\\
0 & 0 & 0 & 0 & 0 & 1\\
0 & 0 & 98/5 & 1 & -98/5 & -2
\end{bmatrix},\,B=\begin{bmatrix}0\\
1\\
0\\
-1\\
0\\
0
\end{bmatrix},\,  C=\begin{bmatrix}1&
0&
0&
0&
0&
0
\end{bmatrix}.
\end{align}

\noindent Fix the observable pair $(S,L)$: 
$$ S=
\begin{bmatrix}0 & 1\\
0 & 0
\end{bmatrix}\quad \text{and}\quad L=[1\,\,1].
$$ 
A family of second order models
that match the first two moments at zero of (\ref{eq_CL_example})
is described by $\Sigma_{G}$ as in (\ref{redmod_CPi}) with
\begin{equation}\label{cpend_redmod}
F=\begin{bmatrix}-g_{1} & 1\\
-g_{2} & 0
\end{bmatrix},\,G=\begin{bmatrix}g_{1}\\
g_{2}
\end{bmatrix},\,H=[1\,\,-1],
\end{equation}
where $g_{1},\,g_{2}\in\mathbb{C}$ are the  free parameters.  Selecting e.g. $g_1=1$ and $g_2=0.5$ yields a model \eqref{cpend_redmod} with $$F=\begin{bmatrix}-1 & 1\\
-0.5 & 0
\end{bmatrix},\,G=\begin{bmatrix}1\\
0.5
\end{bmatrix},\,H=[1\,\,-1],$$ which is stable, but not a very accurate approximation error as revealed by the first row in Table I.  By \cite{i-TAC2016}, the unique second order
model that matches the first two moments of $K(s)$ and $K'(s)$ is
\[
F=\begin{bmatrix}-1 & 0.999\\
0.999 & 2
\end{bmatrix},\,G=\begin{bmatrix}0.333\\
0.333
\end{bmatrix},\,H=[1\,\,-1].
\]
This model is stable, although not apriorically guaranteed. Furthermore, this model exhibits a significant decrease in the $H_2$-norm of the error (see the second row of Table \ref{tab_norm}), since matching the derivative of the transfer function is a necessary first-order optimality condition. However, since the interpolation points are fixed, there is no optimality here, whatsoever. On the other hand, using the gradient-based solution for Problem 2 for solving either the KKT system or the partial minimization problem, we compute the optimal  model \eqref{cpend_redmod} in the family \eqref{cpend_redmod}, with $$F=\begin{bmatrix}
-0.2505  &  1.0000 \\  -0.1500  &    0\end{bmatrix},\ G=\begin{bmatrix}
0.2505 \\ 0.1500 \end{bmatrix},\ H=[1\,\,-1],$$ that matches the first two moments at zero and yields a minimal $H_2$-norm of the approximation error in \eqref{cpend_redmod}, see the third row in Table \ref{tab_norm}. Although it is difficult to prove (due to non-convexity of the optimization problem), in principle the ``optimal'' reduced model  is the unique model that minimizes the approximation error in the family \eqref{cpend_redmod}. Finally, employing the gradient-based soution for Problem \ref{prob_optH2_general}   for solving either the KKT system or the partial minimization problem corresponding to it, yields the optimal second order moment matching-based \emph{stable} reduced order model. The matrix $S$ is not fixed, but the algorithm has been initialized with a diagonal matrix having  subunitary  positive  eigenvalues generated at random. The optimal model is obtained interpolating at $0.0109\pm 0.0946j$, which are the eigenvalues of the optimal  matrix
$$S=\begin{bmatrix}
0.0113  &  0.9953\\-0.0090  &  0.0105
\end{bmatrix}$$ given by the gradient algorithm. The optimal approximation is: 
\[
F=\begin{bmatrix}-0.3354  &  0.6486\\-0.3250 &  -0.3060\end{bmatrix},\quad G=\begin{bmatrix}
0.3467 \\ 0.3160
\end{bmatrix}, \quad H=\begin{bmatrix}
1.0049  & -1.0832
\end{bmatrix}.
\]
The reduced order model is stable with $\sigma(F)=-0.1624\pm0.5422$. Note that the matrix $S$ is unstable and since $A$ is stable, $\sigma(S)\cap\sigma(A)=\emptyset$ is satisfied. Furthermore, the resulting $G$ is such that $\sigma(S)\cap\sigma(S-GL)=\emptyset$ is also verified.

\begin{table}[!htbp]
\centering
\label{tab_norm}
\begin{tabular}{|>{\centering}m{9.3cm}|>{\centering}m{4.9cm}|}
\hline
Second order model $\Sigma_{G}$, $G=[g_{1}\,g_{2}]^{T}$ & $H_{2}$-norm of approx. error\tabularnewline
\hline
\hline
$\Sigma_{G}$, $g_{1}=1,\,g_{2}=0.5,$ matching 2 mom. at 0 & $13.91\cdot 10^{-1}$\tabularnewline
\hline
$\Sigma_{G}$, $g_{1}=0.333,\,g_{2}=0.333$ matching 2 mom. at 0 of
$K$ and $K'$ & $1.86\cdot 10^{-1}$\tabularnewline
\hline 
$\Sigma_{G}$, by Problem \ref{prob_optH2_G}, $g_{1}= 0.2507,\,g_{2}=0.15$ matching 2 mom. at 0 & $1.474\cdot 10^{-1}$\tabularnewline
\hline
$\Sigma_{G}$ by Problem \ref{prob_optH2_general}, $g_{1}=0.3467,\,g_{2}=0.3160$ matching 2 mom. at $0.0109\pm 0.0946j$ yielding minimal  $H_2$-norm & $0.6232\cdot 10^{-4}$\tabularnewline
\hline 
\end{tabular}
\caption{$H_{2}$-norms of the approximation errors for different scenarios. }
\end{table}

%%%%%%%%%%%%%%%%%%%%%%%%%%%%%%%%%%%%

\subsection{CD player}
\noindent In the second test we perform model reduction on the CD player system, which  has 120 states, i.e., $n=120$, with a single input and a single output, see, e.g.,\cite{antoulas-2005,gugercin-antoulas-beattie-SIAM2008}. We obtain the optimal model through the solution to Problem \ref{prob_optH2_G} based on the gradient iteration at orders $\nu=1:10$. In Figure \ref{fig_CDplayer_approx_H2_prob2}, we plot the $H_2$-norm of the approximation error versus the reduced order index. We compute the solution to Problem 2 yielding the $H_2$-norm of the approximation errors in families of reduced order models that achieve moment matching at a set of $\nu$ moments at $\nu$ fixed interpolation points. The interpolation points have been chosen at low frequencies and dense, e.g., 0, 0.2, 0.4, 0.6, ..., as well as rare, e.g., 0, 2, 4, 6,... Note that interpolating at zero ensures preservation of DC-gain of the step response of the system. Figure \ref{fig_CDplayer_approx_H2_prob2} also shows that a rare choice of the interpolation points yields better optimal approximations.
\begin{figure}[!htbp]
\centering
\includegraphics[width=.5\textwidth]{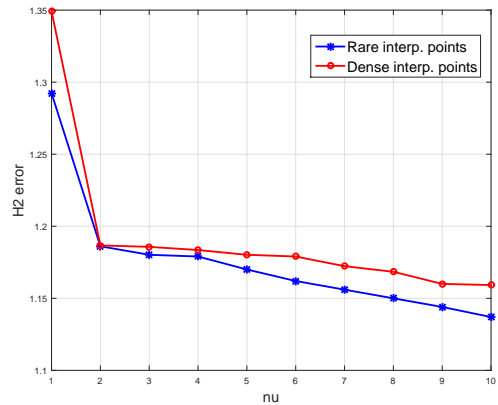}
\caption{$H_2$-norm of the error versus $\nu$ for CD player.}
\label{fig_CDplayer_approx_H2_prob2}
\end{figure}

%%%%%%%%%%%%%%%%%%%%%%%%%%%%%%%

%\subsection{Transportation networks}

%%%%%%%%%%%%%%%%%%%%%%%%%%%%%%%%%5

\section{Conclusions}
\noindent In this paper we have formulated several optimization problems with respect to  $H_2$-norm minimal error approximation in a family of reduced order models that match a prescribed set of fixed $\nu$ moments. For these optimization problems we have derived first-order optimality conditions and  numerical solutions has been proposed  in terms of the gradient method or SDP.  Using test examples from model reduction literature, such as a cart controlled by a double-pendulum or a CD player, % or transportation networks, 
we have also verified the efficiency of our results numerically.

\end{document}